\numberwithin{equation}{section}
\def\Re{{\sf Re}\,}
\def\Im{{\sf Im}\,}
\def\1#1{\overline{#1}}
\def\2#1{\widetilde{#1}}
\def\3#1{\widehat{#1}}
\def\4#1{\mathbb{#1}}
\def\5#1{\frak{#1}}
\def\6#1{{\mathcal{#1}}}
\newcommand{\R}{\mathbb R}
\newcommand{\Ha}{\mathbb H}
\newcommand{\C}{\mathbb C}
\newcommand{\D}{\mathbb D}
\newcommand{\oD}{\overline{\mathbb D}}
\newcommand{\N}{\mathbb N}
\newcommand{\Arg}{\mathrm{Arg}}
\def\Re{{\sf Re}\,}
\def\Im{{\sf Im}\,}
\newcommand{\strip}{\mathbb{S}}
\newcommand{\mcite}[1]{\csname b@#1\endcsname}
\theoremstyle{theorem}
\def\Re{{\sf Re}\,}
\def\Im{{\sf Im}\,}
\newtheorem{theorem}{Theorem}[section]
\newtheorem{lemma}[theorem]{Lemma}
\newtheorem{proposition}[theorem]{Proposition}
\newtheorem{corollary}[theorem]{Corollary}
\theoremstyle{definition}
\newtheorem{definition}[theorem]{Definition}
\theoremstyle{remark}
\newtheorem{remark}[theorem]{Remark}
\numberwithin{equation}{section}
\title[Non-tangential limits and the slope of trajectories]{Non-tangential limits and the slope of trajectories of holomorphic semigroups of the unit disc}
\author[F. Bracci]{Filippo Bracci$^\ast$}
\address{F. Bracci: Dipartimento di Matematica, Universit\`a di Roma ``Tor Vergata", Via della Ricerca
Scientifica 1, 00133, Roma, Italia.} \email{fbracci@mat.uniroma2.it}
\author[M. D. Contreras]{Manuel D. Contreras$^\dag$}
\author[S. D\'{\i}az-Madrigal]{Santiago D\'{\i}az-Madrigal$^\dag$}
\address{M. D. Contreras, S. D\'{\i}az-Madrigal: Camino de los Descubrimientos, s/n\\
Departamento de Matem\'{a}tica Aplicada~II and IMUS\\ Universidad de Sevilla\\ Sevilla,
41092\\ Spain.}\email{contreras@us.es} \email{madrigal@us.es}
\author[H. Gaussier]{Herv\'e Gaussier$^\ddag$}
\address{H. Gaussier: Univ. Grenoble Alpes, CNRS, IF, F-38000 Grenoble, France.}\email{herve.gaussier@univ-grenoble-alpes.fr}
\subjclass[2010]{Primary 37C10, 30C35; Secondary 30D05, 30C80, 37F99, 37C25}
\keywords{Semigroups of holomorphic functions; Gromov's hyperbolicity}
\thanks{$^\ast$ Partially supported by the MIUR Excellence Department Project awarded to the
Department of Mathematics, University of Rome Tor Vergata, CUP E83C18000100006}
\thanks{$^\dag$ Partially supported by the \textit{Ministerio
de Econom\'{\i}a y Competitividad} and the European Union (FEDER) MTM2015-63699-P and  by \textit{La Consejer\'{\i}a de Educaci\'{o}n y Ciencia de la Junta de Andaluc\'{\i}a}.}
\thanks{$^\ddag$ Partial supported by ERC ALKAGE}
\long\def\REM#1{\relax}
\begin{document}
\maketitle

\begin{abstract} Let $\Delta\subsetneq \C$ be a simply connected domain, let $f:\D \to \Delta$ be a Riemann map and let  $\{z_k\}\subset \Delta$ be a compactly divergent sequence. Using Gromov's hyperbolicity theory, we show that $\{f^{-1}(z_k)\}$  converges non-tangentially to a point of $\partial \D$ if and only if there exists a simply connected domain $U\subsetneq \C$ such that $\Delta\subset U$ and $\Delta$ contains a tubular hyperbolic neighborhood  of a geodesic of $U$ and $\{z_k\}$ is eventually contained in a smaller tubular hyperbolic neighborhood of   the same geodesic. As a consequence we show that if $(\phi_t)$ is a non-elliptic  semigroup of holomorphic self-maps of $\D$ with Koenigs function $h$ and $h(\D)$ contains a vertical Euclidean sector, then $\phi_t(z)$ converges to the Denjoy-Wolff point non-tangentially for every $z\in \D$ as $t\to +\infty$. Using new localization results for the hyperbolic distance, we also construct an example of a parabolic semigroup which converges non-tangentially to the Denjoy-Wolff point but it is oscillating, in the sense  that the slope of the trajectories is not a single point.
\end{abstract}

\tableofcontents

\section{Introduction}

The notion of {\sl non-tangential} limit is very important in geometric function theory. A sequence $\{z_n\}\subset \D:=\{z\in \C: |z|<1\}$ converges non-tangentially to a point $\sigma\in \partial \D$ if it converges to $\sigma$ and  it is eventually contained in a Stolz region of vertex $\sigma$, that is, if it is eventually contained in the set $\{z\in \D: |\sigma-z|<R(1-|z|)\}$ for some $R>1$.

\medskip

{\bf Question.} Let $\Delta\subsetneq \C$ be a simply connected domain and let $f:\D \to \Delta$ be a Riemann map. Let $\{z_n\}\subset \Delta$ be a compactly divergent sequence, {\sl i.e.}, with no accumulation points in $\Delta$. How can one decide  if  $\{f^{-1}(z_n)\}$  converges non-tangentially to a point $\sigma$ by looking only at geometric properties of $\Delta$?

\medskip

The first aim of this paper is to give an answer to this question using the hyperbolic distance (a similar question for orthogonal convergence has been settled in \cite{BCDG}).

The first observation is that, if $\gamma:[0,+\infty)\to \D$ is a geodesic for the hyperbolic distance $\omega$  of $\D$ parameterized by arc length, then there exists $\sigma\in \partial \D$ such that $\lim_{t\to+\infty}\gamma(t)=\sigma\in \partial \D$. Moreover, for every $R>0$, the set $S_\D(\gamma, R):=\{z\in \D: \omega(z, \gamma([0,+\infty))<R\}$, which we call a {\sl hyperbolic sector} around $\gamma$ of amplitude $R$, is equivalent to a Stolz region at $\sigma$. Therefore, a compactly divergent sequence $\{z_n\}\subset \D$ converges non-tangentially to $\sigma$ if and only if it is eventually contained in a hyperbolic sector around a geodesic converging to $\sigma$. Given a simply connected domain $U\subset \C$ and a biholomorphism $f:\D \to U$, the map $f$ is an isometry between the hyperbolic distance $k_U$ of $U$ and $\omega$, thus the previous property  is invariant under biholomorphisms and gives a first answer to the previous question. However, such a conclusion is not useful in practice, because knowing geodesics and the hyperbolic distance of a simply connected domain is almost equivalent to knowing the Riemann map of that domain.

However, using the Gromov hyperbolicity theory, we  prove the following result (see Theorem \ref{Thm:nec-suff-non-tg}):

\begin{theorem}\label{Thm:nec-suff-non-tg-intro}
Let $\Delta\subsetneq \C$ be a simply connected domain and let $f:\D\to \Delta$ be a Riemann map. Let $\{z_n\}\subset \Delta$ be a compactly divergent sequence, {\sl i.e.}, with no accumulation points in $\Delta$. Then $\{f^{-1}(z_n)\}$ converges non-tangentially to a point $\sigma\in \partial \D$   if and only if there exist a simply connected domain $U\subsetneq \C$, a geodesic  $\gamma:[0,+\infty)\to U$ of $U$ such that $\lim_{t\to+\infty}k_U(\gamma(t), \gamma(0))=+\infty$ and $R>R_0>0$ such that
\begin{enumerate}
\item $S_U(\gamma, R):=\{w\in U:\, k_{U}(w,\gamma ([0,+\infty)))<R\}\subset\Delta\subseteq U$,
\item  there exists $n_0\geq 0$ such that $z_n\in S_U(\gamma, R_0)$   for all $n\geq n_0$.
\end{enumerate}
\end{theorem}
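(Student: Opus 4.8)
The plan is to treat the two implications very asymmetrically: the forward direction is a formality, while the reverse one is where Gromov hyperbolicity does the work.

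\emph{Necessity.} Here one takes $U:=\Delta$ itself. If $\{f^{-1}(z_n)\}$ converges non-tangentially to $\sigma$ then, being compactly divergent, it is eventually contained in a hyperbolic sector $S_\D(\tilde{\gamma},R_0)$ around a geodesic ray $\tilde{\gamma}$ of $\D$ converging to $\sigma$, by the criterion recalled in the introduction. Since $f$ is an isometry of $(\D,\omega)$ onto $(\Delta,k_\Delta)$, the curve $\gamma:=f\circ\tilde{\gamma}$ is a geodesic ray of $U=\Delta$ with $k_U(\gamma(t),\gamma(0))=\omega(\tilde{\gamma}(t),\tilde{\gamma}(0))\to+\infty$; pushing the containment forward by $f$ gives $z_n\in S_U(\gamma,R_0)$ for large $n$, and $(1)$ holds for any $R>R_0$ simply because $S_\Delta(\gamma,R)\subseteq\Delta=U$.

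\emph{Sufficiency.} I would push everything into $\D$ via $f^{-1}$, writing $\eta:=f^{-1}\circ\gamma$ (a geodesic of $\D$, parametrized by arc length) and $w_n:=f^{-1}(z_n)$. The first step is a localization estimate for the hyperbolic metric: there is $c(r)\ge1$, depending only on $r$, such that whenever the $k_U$-ball $B_U(p,r):=\{w\in U:k_U(w,p)<r\}$ is contained in a subdomain $\Omega\subseteq U$, the hyperbolic density of $\Omega$ at $p$ is at most $c(r)$ times that of $U$. Applying this with $\Omega=\Delta$ along $\gamma$ --- where $B_U(\gamma(t),R)\subseteq S_U(\gamma,R)\subseteq\Delta$ for every $t$ --- shows that the $k_\Delta$-length of any subarc $\gamma|_{[s,t]}$ is at most $c(R)\,|t-s|$; together with $k_\Delta\ge k_U$, this makes $\gamma$, hence $\eta$, a $(c(R),0)$-quasi-geodesic of $\D$. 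The same estimate locates the $z_n$: choosing $t_n$ with $k_U(z_n,\gamma(t_n))<R_0$, the $k_U$-geodesic from $z_n$ to $\gamma(t_n)$ stays in $S_U(\gamma,R)$ (each of its points lies within $R_0<R$ of $\gamma$), and along it $B_U(\,\cdot\,,R-R_0)\subseteq S_U(\gamma,R)\subseteq\Delta$, so $k_\Delta(z_n,\gamma(t_n))\le c(R-R_0)\,R_0=:R_1$; equivalently, $w_n$ lies within $\omega$-distance $R_1$ of $\eta([0,+\infty))$ for all $n\ge n_0$.

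Now I would invoke Gromov hyperbolicity of $(\D,\omega)$, and hence of every simply connected proper subdomain of $\C$. The ray $\eta$ is an unbounded quasi-geodesic (unbounded because $k_U(\gamma(t),\gamma(0))\to+\infty$), so by stability of quasi-geodesics it lies at bounded Hausdorff distance $M$, with $M$ depending only on $c(R)$ and the hyperbolicity constant, from an honest geodesic ray $\tilde{\gamma}$ of $\D$, which converges to some $\sigma\in\partial\D$. Consequently $w_n\in S_\D(\tilde{\gamma},R_1+M)$ for all $n\ge n_0$, and since $\{w_n\}$ is compactly divergent and eventually contained in a hyperbolic sector around a geodesic of $\D$ converging to $\sigma$, the criterion from the introduction yields that $f^{-1}(z_n)\to\sigma$ non-tangentially. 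The main obstacle is precisely the localization/quasi-geodesic step: one must verify that the tube $S_U(\gamma,R)$ is thick enough, measured with the hyperbolic metric of $U$ (hence also of $\Delta$), that $\gamma$ becomes a quasi-geodesic of $\Delta$ with constants controlled by $R$ alone, and that the thinner tube $S_U(\gamma,R_0)$ --- which carries a genuine hyperbolic buffer of size $R-R_0$ --- sits in a bounded $k_\Delta$-neighborhood of $\gamma$. Everything after that is the standard Morse-lemma machinery in the Gromov hyperbolic space $(\D,\omega)$ together with the sector/Stolz-region dictionary recalled in the introduction.
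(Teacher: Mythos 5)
Your proof is correct, and its skeleton matches the paper's: necessity by taking $U=\Delta$ and using the hyperbolic-sector characterization of non-tangential convergence (Proposition \ref{Prop:conv-nt-sc}); sufficiency by showing that $\gamma$ is a uniform quasi-geodesic of $\Delta$, placing the $z_n$ within bounded $k_\Delta$-distance of $\gamma$, and then invoking stability of quasi-geodesics (Corollary \ref{Cor:shadow}) together with Proposition \ref{Prop:conv-nt-sc} again. Where you genuinely diverge is in the localization step. The paper channels both estimates through Lemma \ref{Lem:Stolz-qg} (via Proposition \ref{Prop:suff-qc-cont}), i.e.\ the comparison of $\kappa_{S_U(\gamma,R)}$ and $k_{S_U(\gamma,R)}$ with $\kappa_U$ and $k_U$ on the thinner sector $S_U(\gamma,R_0)$; proving that lemma requires the explicit half-plane description of sectors and the fact that $S_\Ha(\gamma,R_0)$ is totally geodesic. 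You bypass the sector metric altogether: the inclusion of the hyperbolic ball $B_U(p,r)\subseteq\Delta$ gives $\kappa_\Delta(p;\cdot)\leq(\tanh r)^{-1}\kappa_U(p;\cdot)$ (the same ball trick as in \eqref{Eq:prima-mericaW}), applied with $r=R$ along $\gamma$ to obtain the $(c(R),0)$-quasi-geodesic property, and with $r=R-R_0$ along the $U$-geodesic joining $z_n$ to $\gamma(t_n)$ --- which automatically stays within $R_0$ of $\gamma$ because its far endpoint lies on $\gamma$ --- to obtain $k_\Delta(z_n,\gamma(t_n))\leq(\tanh(R-R_0))^{-1}R_0$. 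This is more elementary and yields explicit constants, at the price of not giving the two-sided distance comparison on the whole tube $S_U(\gamma,R_0)$ that Lemma \ref{Lem:Stolz-qg} provides (and which the paper's sector analysis feeds into later statements such as Remark \ref{Rem:disco-in-sector-qg}); for the present theorem your weaker estimates suffice. The remaining differences are cosmetic: you shadow the quasi-geodesic in $\D$ rather than in $\Delta$, which changes nothing since $f$ is an isometry for the hyperbolic distances.
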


A simple consequence of the previous theorem is that if $\Delta$ is a simply connected domain contained in an upper half-plane and containing a {\sl vertical Euclidean sector} $p+\{z\in \C: \Im z> k| \Re z|\}$, for some $p\in \C$ and $k>0$,  then $f^{-1}(p+it)$ converges non-tangentially to a boundary point of $\D$ as $t\to+\infty$.

Another interesting consequence is that if $\Delta$ is a simply connected domain starlike at infinity (that is, $\Delta+it\subset \Delta$ for all $t\geq 0$), that contains a vertical Euclidean sector, then the curve $[0,+\infty)\ni t\mapsto f^{-1}(f(z)+it)$ converges non-tangentially to some point of $\partial \D$ as $t\to+\infty$ for every $z\in \D$. In fact, it can be shown that such a curve is a uniform quasi-geodesic in the sense of Gromov.

The latter fact has an interesting application to the study of {\sl one-parameter continuous semigroups} of holomorphic self-maps of $\D$---or, for short, semigroups in $\D$. A semigroup in $\D$ is a continuous homomorphism of the real semigroup $[0,+\infty)$ endowed with the Euclidean topology to the semigroup under composition of holomorphic self-maps of $\D$ endowed with the topology of uniform convergence on compacta. Semigroups in $\D$ have been intensively studied (see, {\sl e.g.},\cite{Abate,AhElReSh99,Berkson-Porta, ES,Shb,Siskakis-tesis}). It is known that, if $(\phi_t)$ is a semigroup in $\D$, which is not a group of hyperbolic rotations, then there exists a unique $\tau\in \oD$, the {\sl Denjoy-Wolff point} of $(\phi_t)$ such that $\lim_{t\to +\infty}\phi_t(z)=\tau$, and the convergence is uniform on compacta. In case $\tau\in \D$, the semigroup is called elliptic. Non-elliptic semigroups can be divided into three types: hyperbolic, parabolic of positive hyperbolic step and parabolic of zero hyperbolic step. It is known (see \cite{CD, CDP, EYRS}) that if $(\phi_t)$ is a hyperbolic semigroup then the trajectory $t\mapsto \phi_t(z)$ always converges non-tangentially to its Denjoy-Wolff point as $t\to +\infty$ for every $z\in \D$, while, if it is parabolic of positive hyperbolic step then $\phi_t(z)$ always converges tangentially to its Denjoy-Wolff point as $t\to +\infty$ for every $z\in \D$.

In case of parabolic semigroups of zero hyperbolic step, the behavior of trajectories can be rather wild. All the trajectories have the same {\sl slope}, that is the cluster set of $\Arg(1-\overline{\tau}\phi_t(z))$ as $t\to +\infty$---which is a compact subset of $[-\pi/2,\pi/2]$---does not depend on $z\in \D$ (see \cite{CD, CDP}). In many cases this slope is just a point, but in  \cite{Bet, CDG}, examples are constructed  such  that the slope is the full interval $[-\pi/2,\pi/2]$.

Recall  (see, {\sl e.g.}, \cite{Abate, BrAr, Cowen, BCD, EKRS}) that  $(\phi_t)$ is a parabolic semigroup in $\D$ of zero hyperbolic step if and only if  there exists a univalent function $h$, the {\sl Koenigs function} of $(\phi_t)$, such that $h(\D)$ is starlike at infinity, $h(\phi_t(z))=h(z)+it$ for all $t\geq 0$ and $z\in \D$, and for every $w\in \C$ there exists $t_0\geq 0$ such that $w+it_0\in h(\D)$. The triple $(\C, h, z\mapsto z+it)$ is called a {\sl canonical model} for $(\phi_t)$ and it is essentially unique. A straigthforward consequence of our previous discussion is the following (see Proposition \ref{sector-implies-convergnt}):

\begin{corollary}\label{non-tg-intro}
Let $(\phi_t)$ be a parabolic semigroup of zero hyperbolic step with Denjoy-Wolff point $\tau\in \partial \D$. Assume that $h(\D)$ contains a vertical Euclidean sector. Then for every $z\in \D$ the trajectory $\phi_t(z)$ converges non-tangentially to $\tau$ as $t\to+\infty$. In other words, the slope of $(\phi_t)$ is a set $[a,b]$ with $-\pi/2<a\leq b<\pi/2$.
\end{corollary}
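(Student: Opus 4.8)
The plan is to deduce Corollary~\ref{non-tg-intro} directly from Theorem~\ref{Thm:nec-suff-non-tg-intro} applied to the canonical model of the semigroup. Recall that for a parabolic semigroup $(\phi_t)$ of zero hyperbolic step with Koenigs function $h$, the set $\Delta:=h(\D)$ is a simply connected domain starlike at infinity, and $h(\phi_t(z))=h(z)+it$. Fix $z_0\in \D$ and set $w_0:=h(z_0)$. The trajectory of $z_0$ is then $\phi_t(z_0)=h^{-1}(w_0+it)$, so, writing $f:=h$ as our Riemann map onto $\Delta$, we must show that $\{f^{-1}(w_0+it)\}_{t\ge 0}$ converges non-tangentially to the Denjoy--Wolff point $\tau$ (that the limit is precisely $\tau$, and not merely some boundary point, follows from the classical fact $\lim_{t\to+\infty}\phi_t(z)=\tau$ uniformly on compacta).

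First I would produce the domain $U$ and the geodesic $\gamma$ required by Theorem~\ref{Thm:nec-suff-non-tg-intro}. By hypothesis $\Delta$ contains a vertical Euclidean sector $V:=p+\{z\in\C:\Im z>k|\Re z|\}$ for some $p\in\C$, $k>0$. The natural choice is $U:=\C\setminus(-\infty,c]$ for a suitable real constant $c$ (a half-line pointing downward along the axis of the sector, translated to lie below $\Delta$), or more simply a translate/rotate of a half-plane; the essential point is that $V$ itself is biholomorphic to $\D$ and its vertical axis $t\mapsto p+it$ (for $t$ large) is, up to bounded hyperbolic error, a geodesic. I would take $\gamma$ to be the geodesic of $U$ asymptotic to the vertical direction $+i\infty$ and passing near $p$, parametrized by arc length, so that $k_U(\gamma(t),\gamma(0))\to+\infty$. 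The inclusion $\Delta\subseteq U$ is immediate from the choice of $U$, and condition~(1), namely $S_U(\gamma,R)\subset\Delta$ for some $R>0$, reduces to the geometric fact that a hyperbolic sector of $U$ around a vertical geodesic, of sufficiently small amplitude, is eventually contained in any Euclidean sector of positive opening around the same vertical direction — this is a standard estimate comparing $k_U$ near a smooth boundary arc with the Euclidean distance-to-boundary (the $-\log\dist$ asymptotics), which gives that $S_U(\gamma,R)$ lies within a Euclidean cone of half-angle $\to 0$ as one goes to infinity, hence inside $V\subset\Delta$ past a compact set; the finitely many initial points are absorbed by enlarging along $\gamma$ if needed, or by noting the starlike-at-infinity structure.

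Next, for condition~(2) I must check that the points $w_0+it$ eventually lie in a \emph{smaller} hyperbolic sector $S_U(\gamma,R_0)$ with $R_0<R$. Since $\gamma$ is asymptotic to the vertical half-line through (approximately) $p$, and $w_0+it$ moves vertically, the Euclidean distance between the trajectory $\{w_0+it\}$ and $\gamma([0,+\infty))$ stays bounded (indeed it is eventually a bounded horizontal displacement), while the distance to $\partial U$ grows linearly in $t$; comparing again with the hyperbolic metric of $U$ (which near infinity behaves like $|dw|/\dist(w,\partial U)$ up to constants on the relevant region), the hyperbolic distance from $w_0+it$ to $\gamma$ stays bounded by some fixed $R_0$. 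A cleaner route: both $t\mapsto \gamma(t)$ and $t\mapsto w_0+it$ are, after reparametrization, \emph{quasi-geodesics of $U$ with the same endpoint at infinity} (the vertical trajectory in a starlike-at-infinity domain containing a sector is a uniform quasi-geodesic, as noted in the discussion preceding the corollary), so by the Morse stability lemma of Gromov hyperbolicity they remain within bounded hyperbolic distance of one another; that bound is the desired $R_0$, and shrinking nothing is lost by taking $R>R_0$ above. Theorem~\ref{Thm:nec-suff-non-tg-intro} then yields that $f^{-1}(w_0+it)=\phi_t(z_0)$ converges non-tangentially to a point of $\partial\D$, which as observed is $\tau$.

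The main obstacle is the quantitative geometric comparison in Steps~2 and~3: one needs to verify carefully that a hyperbolic sector of the model domain $U$ around the vertical geodesic sits inside the given Euclidean sector (so that $\Delta\supset S_U(\gamma,R)$), and simultaneously that the vertical trajectory $\{w_0+it\}$ does \emph{not} escape to the boundary of the sector — i.e.\ that it is genuinely trapped in a bounded-amplitude hyperbolic sector. Both hinge on the two-sided estimate relating $k_U$ to the Euclidean quantity near the analytic boundary arc of $U$ and on Gromov's Morse stability (\emph{shadowing}) lemma to pass from ``quasi-geodesic'' to ``within bounded hyperbolic distance of a geodesic''; these are exactly the tools assembled in the earlier sections, so the proof of the corollary itself should be short once Theorem~\ref{Thm:nec-suff-non-tg-intro} is in hand. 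Finally, the statement about the slope being a nondegenerate interval $[a,b]$ with $-\pi/2<a\le b<\pi/2$ is just the translation of ``non-tangential convergence'' into the language of $\Arg(1-\overline\tau\phi_t(z))$: non-tangential means the trajectory stays in a Stolz angle, i.e.\ $\Arg(1-\overline\tau\phi_t(z))$ stays in a compact subinterval of $(-\pi/2,\pi/2)$, and the slope is by definition the cluster set of this quantity, hence a closed subinterval $[a,b]\subset(-\pi/2,\pi/2)$.
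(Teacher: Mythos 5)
Your overall strategy is the same as the paper's (its Proposition \ref{sector-implies-convergnt}): embed $\Delta=h(\D)$ in a slit domain $U$ with a vertical geodesic, check conditions (1)--(2) of Theorem \ref{Thm:nec-suff-non-tg-intro}, and conclude. But there is a genuine gap at the very first step, the construction of $U$. You propose either a downward half-line ``along the axis of the sector, translated to lie below $\Delta$'', or a half-plane containing $\Delta$. Neither need exist. Take $\Delta=\C\setminus\{z:\Re z=5,\ \Im z\le 0\}$, which is a legitimate Koenigs domain of a parabolic semigroup of zero hyperbolic step and contains the vertical sector $\{z:\Im z>k|\Re z|\}$ (axis the imaginary axis) for $k$ large: every downward vertical ray along that axis stays inside $\Delta$, so no slit placed on the sector's axis can satisfy $\Delta\subseteq U$, and $\Delta$ is contained in no half-plane. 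So the domain $U$ you need is not produced by your recipe, and conditions (1)--(2) cannot even be formulated. The missing idea, which is exactly the paper's, is to anchor the slit at an \emph{arbitrary} boundary point: pick $p\in\partial\Delta$, use starlikeness at infinity to get $\Delta\subset\mathcal K_p:=\C\setminus\{\zeta:\Re\zeta=\Re p,\ \Im\zeta\le\Im p\}$, and then \emph{move the sector to that vertical line}: if $w_0$ is the intersection of $\{\Re\zeta=\Re p\}$ with the boundary of the given sector $q+iV(\beta,0)$, then $w_0+iV(\beta,0)\subset\Delta$, and this translated sector has axis the vertical geodesic $t\mapsto p+it$ of $\mathcal K_p$. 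Only after this re-centering do your remaining steps (small-amplitude hyperbolic sectors of $\mathcal K_p$ are Euclidean cones plus a hyperbolic disc, via Lemma \ref{Lem:sector-koebe} and Remark \ref{Rem:disco-in-sector-qg}; start the geodesic at $p+it_0$ with $t_0$ large to absorb the region near the vertex) give $S_U(\gamma,R)\subset\Delta$.

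Two smaller remarks. For condition (2) you treat an arbitrary trajectory $w_0+it$ and argue it stays within bounded $k_U$-distance of $\gamma$; your Distance-Lemma estimate (horizontal offset bounded, $\delta_U\sim t$, so $k_{\mathcal K_p}(w_0+it,p+it)\to 0$) does work and is a legitimate variant, but the ``Morse lemma for two quasi-geodesics with a common endpoint at infinity'' is not among the tools stated in the paper, so if you use it you must prove it or replace it by the explicit estimate. The paper sidesteps this entirely: it applies Theorem \ref{Thm:nec-suff-non-tg-intro} to the single point $z_0$ with $h(z_0)=p+it_0$, whose trajectory lies \emph{on} the geodesic, and then invokes the fact that the slope is independent of the initial point; either route is fine once the construction of $U$ is repaired. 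Finally, your phrase that $S_U(\gamma,R)$ lies in a cone ``of half-angle $\to 0$ as one goes to infinity'' should read: the half-angle is fixed by $R$ and tends to $0$ as $R\to 0$, so one chooses $R$ small enough that it is smaller than the opening of the given sector.
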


The condition of  $h(\D)$ containing a vertical Euclidean sector is  not necessary for having non-tangential convergence of the orbits: let $\alpha>1$ and let $Z_\alpha:=\{z\in \C: |\Re z|^\alpha<\Im z\}$, a parabola-like open set. Since $Z_\alpha$ is simply connected and starlike at infinity, if $f:\D \to Z_\alpha$ is a Riemann map,  $\phi_t(z):=f^{-1}(f(z)+it)$,  $z\in \D$, is a semigroup whose canonical model is $(\C, f, z\mapsto z+it)$, hence, it is  parabolic of zero hyperbolic step. Since $Z_\alpha$ is symmetric with respect to the imaginary axis, it follows that $f^{-1}(it)$, $t>0$, is a geodesic in $\D$. Therefore $\phi_t(f^{-1}(i))$  converges radially to the Denjoy-Wolff point as $t\to +\infty$.

Despite the previous example, it turns out that for every $\alpha>1$ there exists a parabolic semigroup $(\phi_t^\alpha)$ of zero hyperbolic step with Koenigs function $h_\alpha$ such that $Z_\alpha\subset h_\alpha(\D)$ but $\phi_t^\alpha(z)$ does not converge non-tangentially to the Denjoy-Wolff point (see Proposition~\ref{Prop:tang-conve-with-para}).

Furthermore, using Corollary \ref{non-tg-intro}, we are able to construct a (rather explicit in terms of the canonical model) example of a parabolic semigroup with zero hyperbolic step whose trajectories converge non-tangentially to the Denjoy-Wolff point but are oscillating (see Proposition \ref{Prop:example-non-tg-osc}):

\begin{proposition}\label{main-para}
There exists a parabolic semigroup $(\phi_t)$ of zero hyperbolic step in $\D$ such that for every $z\in \D$, the slope of $(\phi_t)$ is $[a,b]$ with $-\pi/2<a<b<\pi/2$.
\end{proposition}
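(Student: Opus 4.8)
The plan is to realize $(\phi_t)$ through its canonical model, by exhibiting an explicit Koenigs domain. I would construct a simply connected domain $\Omega\subsetneq\C$, starlike at infinity, whose width along the vertical direction tends to $+\infty$ --- so that, for a Riemann map $h\colon\D\to\Omega$, the family $\phi_t(z):=h^{-1}(h(z)+it)$ is a parabolic semigroup of zero hyperbolic step --- and such that $\Omega$ contains a vertical Euclidean sector, so that Corollary~\ref{non-tg-intro} already forces the slope of $(\phi_t)$ to be a compact interval $[a,b]\subseteq(-\pi/2,\pi/2)$. Since by \cite{CD,CDP} the slope is independent of the point, it then suffices to produce $\Omega$ with $a<b$; the mechanism for this is to let $\Omega$ alternate, along a very lacunary sequence of heights, between two fixed ``tilted sector'' shapes whose associated parabolic semigroups have two \emph{different} slopes.

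Concretely, fix $0<c<C$ (for definiteness $c=\tan(\pi/8)$, $C=\cot(\pi/8)$) and $R_1<R_2<\cdots\to+\infty$ so sparse that $\log(R_{n+1}/R_n)\to+\infty$, and set $\Omega=\{x+iy\colon y>y_0,\ p(y)<x<q(y)\}$ where $p$ is non-increasing and $q$ non-decreasing, with $p'\equiv-C$, $q'\equiv c$ on $[R_{2n},R_{2n+1}]$ and $p'\equiv-c$, $q'\equiv C$ on $[R_{2n+1},R_{2n+2}]$, interpolating the slopes monotonically on short transition intervals. Monotonicity of $p,q$ makes $\Omega$ a simply connected domain, starlike at infinity, with $p(y)\to-\infty$ and $q(y)\to+\infty$; as $q'$ and $-p'$ are everywhere $\ge c$, one has $q(y)>\tfrac c2 y$ and $p(y)<-\tfrac c2 y$ for $y$ large, so that $iC_0+\{\Im z>(2/c)|\Re z|\}\subset\Omega$ for $C_0$ large and Corollary~\ref{non-tg-intro} applies. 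Inside the horizontal strip spanned by a block of the first kind $\Omega$ coincides, by construction, with a translate of the fixed sector $S_1:=\{3\pi/8<\arg w<7\pi/8\}$ (whose two walls have exactly slopes $-C$ and $c$), and an elementary computation with the map $w\mapsto e^{-i\pi/2}(e^{-3i\pi/8}w)^{2}$ onto $\Hr$ shows that the parabolic semigroup with Koenigs domain $S_1$ has slope $\pi/4$; symmetrically, a block of the second kind spans a translate of $S_2:=\{\pi/8<\arg w<5\pi/8\}$, whose semigroup has slope $-\pi/4$.

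It remains to transfer this to $\Omega$. Using the localization estimates for the hyperbolic distance developed in this paper, together with the lacunarity $\log(R_{n+1}/R_n)\to+\infty$, I would argue as follows. Fix $z_0\in\D$, put $w_0:=h(z_0)$, and for each $n$ consider the orbit point $\phi_{t_n}(z_0)$ whose image $w_0+it_n$ lies at height $h_n:=\sqrt{R_{2n}R_{2n+1}}$, hyperbolically deep inside the $n$-th block. Since $\Omega$ agrees with the extended sector $\widetilde{S}_1^{(n)}$ on that strip, and $w_0+it_n$ is hyperbolically far both from the strip's horizontal boundary and from the vertex of $\widetilde{S}_1^{(n)}$, the localization estimate gives that $k_\Omega$ near $w_0+it_n$ differs from $k_{\widetilde{S}_1^{(n)}}$ there by $o(1)$ as $n\to\infty$; transporting both pictures to $\D$, the point $\phi_{t_n}(z_0)$ lies within hyperbolic distance $o(1)$ of the corresponding trajectory point of the model semigroup of $\widetilde{S}_1^{(n)}$, whose $\Arg(1-\overline{\tau}\,\cdot\,)$ is within $o(1)$ of $\pi/4$. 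Since a hyperbolic ball of radius $\varepsilon$ centred at a point of a Stolz region at $\tau$ moves $\Arg(1-\overline{\tau}\,\cdot\,)$ by at most $O(\varepsilon)$, we get $\Arg(1-\overline{\tau}\,\phi_{t_n}(z_0))\to\pi/4$; the analogous heights in the blocks of the second kind produce a sequence along which this quantity tends to $-\pi/4$. As $t\mapsto\Arg(1-\overline{\tau}\,\phi_t(z_0))$ is continuous and (by Corollary~\ref{non-tg-intro}) $(-\pi/2,\pi/2)$-valued, its cluster set --- the slope of $(\phi_t)$ --- is a subinterval of $(-\pi/2,\pi/2)$ containing both $-\pi/4$ and $\pi/4$, hence equals $[a,b]$ with $-\pi/2<a\le-\pi/4<\pi/4\le b<\pi/2$.

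I expect the transfer step to be the crux: one must have the localization available in a form that controls not merely $k_\Omega$ but the location --- and hence $\Arg(1-\overline{\tau}\,\cdot\,)$ --- of the orbit, and the comparison error has to be genuinely $o(1)$, not merely bounded, which is precisely what forces the orbit point to sit hyperbolically deep in its block and the heights $R_n$ to grow faster than any geometric progression. The remaining points --- keeping $p,q$ monotone while their slopes oscillate (this is what makes $\Omega$ starlike at infinity), checking simple connectedness, zero hyperbolic step and the inclusion of a vertical Euclidean sector, and verifying that the two model slopes $\pm\pi/4$ lie strictly inside $(-\pi/2,\pi/2)$ and strictly apart --- are routine, the role of Corollary~\ref{non-tg-intro} being only to deliver the clean containment $[a,b]\subseteq(-\pi/2,\pi/2)$, the non-degeneracy being produced by the explicit models.
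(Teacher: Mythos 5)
Your overall architecture matches the paper's: build a Koenigs domain $\Omega$ starlike at infinity that contains a vertical Euclidean sector (so Corollary \ref{non-tg-intro} confines the slope to a compact subinterval of $(-\pi/2,\pi/2)$), make its shape alternate along a very lacunary sequence of heights so that the orbit cannot have a limiting argument, and invoke \cite{CD,CDP} for independence of the base point. The routine items you list (monotonicity of $p,q$, simple connectedness, zero hyperbolic step, the sector inclusion, the slope $\pm\pi/4$ of the two tilted-sector models) are indeed fine. The difference is in how oscillation is detected, and that is where your argument has a genuine gap.

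The transfer step is not justified, and it is not a technicality. What the localization results of this paper (Theorem \ref{Thm:localiz}, Propositions \ref{Prop:local-strip-bound}, \ref{Prop:estim-strip-var2}, \ref{good box}) give is a comparison of $\kappa$ and $k$ between a domain and a sub- or super-domain, for points deep inside the localized region. In your situation $\Omega$ and the extended sector $\widetilde S_1^{(n)}$ merely \emph{agree} on a horizontal strip: neither contains the other (above the block the sector is wider on one side and $\Omega$ on the other), so even the distance comparison requires an intermediate domain and a good-box-type argument rather than a citation. More seriously, even granting a multiplicative $1+o(1)$ comparison of $k_\Omega$ and $k_{\widetilde S_1^{(n)}}$ inside the block, this says nothing about where $h^{-1}(w_0+it_n)$ sits in $\D$ relative to $\tau$: the quantity $\Arg(1-\overline{\tau}\phi_{t_n}(z_0))$ is computed through the Riemann map of $\Omega$, while the ``corresponding trajectory point of the model'' lives in $\D$ through a \emph{different} Riemann map, and there is no canonical correspondence under which closeness of internal distances in the block implies $o(1)$ closeness of the two image points in $\D$. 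The intrinsic quantity that controls the argument is the signed hyperbolic distance from the orbit point to a geodesic ray of $\Omega$ landing at the prime end corresponding to $\tau$ (together with being far along it); that geodesic is a global object, and controlling its trace inside a block is exactly what the paper's good boxes plus Gromov's shadowing lemma (Proposition \ref{good box}, Corollary \ref{cor:good-box-estim}) are built for. Note also that even with that machinery the paper only extracts crude two-sided information --- the geodesic $h([0,1))$ crosses the imaginary axis infinitely often, while at other heights $it$ stays at hyperbolic distance at least $\beta>0$ from it --- which already yields a non-degenerate $[a,b]$; your stronger claim that the argument actually tends to $\pm\pi/4$ along subsequences would need a quantitative, $o(1)$-precise version of those geodesic-shadowing estimates (or a harmonic-measure argument as in \cite{Bet,K}), none of which is supplied by the sentence ``transporting both pictures to $\D$''. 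As written, the crux of your proof is asserted rather than proved.
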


Our technique does not allow to prescribe exactly the values of $a, b$. In \cite{Bet} (see also \cite{K} for details) it is remarked that, with a slight modification of the technique of harmonic measure theory used by the author in order to construct parabolic semigroups with slope $[-\pi/2,\pi/2]$, it is possible to construct examples of parabolic semigroups having slope $[a,b]$ {\sl for every} $-\pi/2<a<b<\pi/2$.

The proof of the latter proposition is quite involved and the techniques we use might be interesting in their own right. The main new tools we introduce and exploit in our construction are``good boxes'' (see Section \ref{good}). These are open subsets of simply connected domains where one can estimates hyperbolic distance and displacement of geodesics using the corresponding objects for strips.

The plan of the paper is the following. In Section \ref{geo}, we recall the notion of geodesics and Gromov's quasi-geodesics in simply connected domains and state some results we need in the paper. In Section \ref{loc}, we collect some known (and some possibly new) results of localization  for the hyperbolic metric and the hyperbolic distance in simply connected domains. In Section \ref{hypse}, we prove Theorem \ref{Thm:nec-suff-non-tg-intro} and Corollary \ref{non-tg-intro}. In Section \ref{good} we introduce ``good boxes'' and prove the results about geodesics and hyperbolic metric we mentioned above. Finally, in Section \ref{Traj}, we prove Proposition \ref{main-para} and  Proposition \ref{Prop:tang-conve-with-para}.

\smallskip

We thank the referees for many useful comments which improved the original manuscript.

\medskip

{\bf Notations.} In this paper we will freely make use of Carath\'eodory's prime end theory (see, {\sl e.g.},\cite{CL, Pommerenke, Pommerenke2}). In particular, recall that  every simply connected domain $\Delta\subsetneq \C$ has a Carath\'eodory boundary $\partial_C\Delta$ given by the union of the prime ends of $\Delta$. The set $\widehat{\Delta}:=\Delta\cup \partial_C\Delta$ can be endowed with  the Carath\'eodory topology. For an open set $U\subset \Delta$, we let $U^\ast$ be the union of $U$ with every prime end of $\Delta$ for which there exists a representing null chain which is eventually contained in $U$. The Carath\'eodory topology is the topology generated by all open sets $U$ of $\Delta$ and the sets $U^\ast$. It is known that $\oD$ with the Euclidean topology is homeomorphic to $\widehat{\D}$ and, if $f:\D \to \Delta$ is a Riemann map, then $f$ extends to a homeomorphism $\hat{f}: \widehat{\D}\to \widehat{\Delta}$. In this way, every point $\sigma\in \partial \D$ corresponds to a unique prime end $\underline{x}_\sigma\in \partial_C\D$ and, via $f$, to a unique prime end $\hat{f}(\underline{x}_\sigma)\in \partial_C\Delta$.

\smallskip

We denote by $\C_\infty$ the Riemann sphere. If $\Delta\subset \C$ is a domain, we denote by $\partial_\infty \Delta$ its boundary in $\C_\infty$. Note that $\partial_\infty \Delta=\partial \Delta$ in case $\Delta$ is bounded, otherwise $\partial_\infty \Delta=\partial \Delta\cup\{\infty\}$.

Finally, we denote by $\omega(z,w)$ the hyperbolic distance between $z$ and $w\in \D$.

\section{Geodesics and quasi-geodesics in simply connected domains}\label{geo}

Let $\Delta\subsetneq \C$ be a simply connected domain. We denote by $\kappa_\Delta$ the infinitesimal metric in $\Delta$, that is,  for $z\in \Delta$, $v\in \C$, we let
\[
\kappa_\Delta(z;v):=\frac{|v|}{f'(0)},
\]
where $f:\D\to \Delta$ is the Riemann map such that $f(0)=z$, $f'(0)>0$. The hyperbolic distance $k_\Delta$ in $\Delta$ is defined for $z, w\in \Delta$ as
\[
k_\Delta(z,w):=\inf \int_0^1 \kappa_\Delta(\gamma(t);\gamma'(t))dt,
\]
where the infimum is taken over all piecewise $C^1$-smooth curve $\gamma:[0,1]\to \Delta$ such that $\gamma(0)=z, \gamma(1)=w$.

It is well known that, for all $z,w\in \D$,
\[
\omega(z,w):=k_\D(z,w)=\frac{1}{2}\log \frac{1+\left|\frac{z-w}{1-\overline{z}w} \right|}{1-\left|\frac{z-w}{1-\overline{z}w} \right|}.
\]

Let  $-\infty<a<b<+\infty$ and let $\gamma:[a,b]\to \Delta$ be a piecewise $C^1$-smooth curve. For $a\leq s\leq t\leq b$, we define the {\sl hyperbolic length of $\gamma$ in $\Delta$} between $s$ and $t$ as
\[
\ell_\Delta(\gamma;[s,t]):=\int_s^t \kappa_\Delta(\gamma(u);\gamma'(u))du.
\]
In case $s=a$ and $t=b$, we will simply write
\[
\ell_\Delta(\gamma):=\ell_\Delta(\gamma;[a,b]).
\]

\begin{definition}
Let $\Delta\subsetneq \C$ be a simply connected domain.  A    $C^1$-smooth curve $\gamma:(a,b)\to \Delta$, $-\infty\leq a<b\leq +\infty$ such that $\gamma'(t)\neq 0$ for all $t\in (a,b)$ is called a {\sl geodesic} of $\Delta$  if for every $a< s\leq t< b$,
\[
\ell_\Delta(\gamma;[s,t])=k_\Delta(\gamma(s), \gamma(t)).
\]
Moreover, if $z,w\in \Delta$ and there exist $a<s<t<b$ such that $\gamma(s)=z$ and $\gamma(t)=w$, we say that $\gamma|_{[s,t]}$ is a geodesic which joins $z$ and $w$.

With a slight abuse of notation, we call geodesic also the image of $\gamma$ in $\Delta$.
\end{definition}

Using Riemann maps and the invariance of hyperbolic metric and distance under the action of biholomorphisms, we have the following result:

\begin{proposition}\label{Prop:geodesic-in-simply}
Let $\Delta\subsetneq \C$ be a simply connected domain. Let $-\infty\leq a<b\leq +\infty$.
\begin{enumerate}
\item If $\eta:(a,b) \to \Delta$ is a geodesic,  then
\[
\eta(a):=\lim_{t\to a^+}\eta(t), \quad \eta(b):=\lim_{t\to b^-}\eta(t)
\]
 exist as limits in the Carath\'eodory topology of $\Delta$. Moreover, if $\eta(a), \eta(b)\in \Delta$ then
 \[
k_\Delta(\eta(a),\eta(b))=\lim_{\epsilon\to 0^+}\ell_{\Delta}(\eta;[a+\epsilon,b-\epsilon]).
 \]
\item If $\eta:(a,b) \to \Delta$ is a geodesic such that $\eta(a), \eta(b)\in \partial_C \Delta$, then $\eta(a)\neq \eta(b)$.
\item For any $z,w\in \widehat{\Delta}$, $z\neq w$, there exists a real analytic geodesic $\gamma:(a,b)\to \Delta$ such that $\gamma(a)=z$ and $\gamma(b)=w$. Moreover, such a geodesic is essentially unique, namely, if $\eta:(\tilde a, \tilde b)\to \Delta$ is another geodesic joining $z$ and $w$, then $\gamma([a,b])=\eta([\tilde a,\tilde b])$ in $\widehat{\Delta}$.
\item If  $\gamma:(a,b)\to \Delta$ is a geodesic such that either $\gamma(a)\in \Delta$ or $\gamma(b)\in \Delta$ (or both), then there exists a geodesic $\eta:(\tilde a,\tilde b)\to \Delta$ such that $\eta(\tilde a), \eta(\tilde b)\in \partial_C \Delta$ and such that $\gamma([a,b])\subset \eta([\tilde a, \tilde b])$ in $\widehat{\Delta}$.
\item If  $\gamma:(a,b)\to \Delta$ is a geodesic such that  $\gamma(a)\in \partial_C\Delta$ then the cluster set $\Gamma(\gamma,a)$ is equal to $\Pi(\gamma(a))$, the principal part of the prime end $\gamma(a)$ (and similarly for $b$ in case $\gamma(b)\in  \partial_C\Delta$).
\end{enumerate}
\end{proposition}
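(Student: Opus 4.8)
The strategy is to transport everything to the unit disc by a Riemann map $f:\D\to\Delta$. Since $f$ is a biholomorphism it is an isometry between $(\D,\omega)$ and $(\Delta,k_\Delta)$ carrying geodesics to geodesics, and by Carath\'eodory's theorem it extends to a homeomorphism $\hat f:\widehat\D\to\widehat\Delta$, where $\oD$ with the Euclidean topology is homeomorphic to $\widehat\D$. One then uses that geodesics of $\D$ are completely explicit: parametrized by arc length, a geodesic of $\D$ is an arc of a Euclidean circle meeting $\partial\D$ orthogonally (a diameter being the degenerate case); any two distinct points of $\oD$ are joined by exactly one such arc; each maximal such arc has two \emph{distinct} endpoints on $\partial\D$; and two geodesics of $\D$ sharing an endpoint $\sigma\in\partial\D$ are mutually asymptotic, i.e. the $\omega$-distance from a point of one to the other tends to $0$ along the shared end (this is immediate after sending $\sigma$ to $\infty$, where geodesics ending at $\sigma$ become vertical half-lines). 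In particular the radial segment $[0,\sigma)$ is itself a geodesic of $\D$.

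Items (1)--(4) are then routine transfers. For (1): $\tilde\eta:=f^{-1}\circ\eta$ runs monotonically along a circular arc, hence $\lim_{t\to a^+}\tilde\eta(t)$ and $\lim_{t\to b^-}\tilde\eta(t)$ exist in $\oD$, and applying $\hat f$ gives the limits in the Carath\'eodory topology. If moreover $\eta(a),\eta(b)\in\Delta$, the geodesic identity $\ell_\Delta(\eta;[a+\epsilon,b-\epsilon])=k_\Delta(\eta(a+\epsilon),\eta(b-\epsilon))$ together with continuity of $k_\Delta$ on $\Delta\times\Delta$ yields the displayed formula as $\epsilon\to0^+$. For (2): a circular arc orthogonal to $\partial\D$ meets $\partial\D$ in two distinct points, so the prime ends $\eta(a),\eta(b)$ are distinct. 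For (3): pull $z\ne w$ back to $\oD$ via $\hat f^{-1}$, take the unique geodesic of $\D$ joining them, and push it forward; it is real analytic since $f$ is holomorphic and the $\D$-geodesic is real analytic, and its derivative never vanishes since $f'\ne0$ and the $\D$-geodesic has non-vanishing derivative, while uniqueness up to the image in $\widehat\Delta$ is transported from uniqueness in $\D$. For (4): a geodesic segment of $\D$ with an endpoint in $\D$ extends uniquely to a complete geodesic line of $\D$ --- a full circular arc orthogonal to $\partial\D$ whose endpoints lie on $\partial\D$ and are distinct by (2); pushing this forward by $f$ gives the required $\eta$.

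The only genuinely delicate item is (5). Here $\tilde\gamma:=f^{-1}\circ\gamma$ is a geodesic of $\D$ landing at some $\sigma\in\partial\D$, and $\gamma(a)$ is the prime end $\hat f(\underline x_\sigma)$. The plan is first to reduce the cluster set of $\gamma$ to the radial one: since $\tilde\gamma$ and the radial segment at $\sigma$ are two geodesics of $\D$ with the common endpoint $\sigma$, they are mutually asymptotic, so, $f$ being an isometry, $f(\tilde\gamma)$ stays within vanishing $k_\Delta$-distance of $f([0,\sigma))$ as one approaches $\sigma$; since both curves tend to $\partial\Delta$ there, the standard comparison between the hyperbolic and the Euclidean metric (Koebe distortion) forces their Euclidean separation to tend to $0$, whence $\Gamma(\gamma,a)$ equals the radial cluster set of $f$ at $\sigma$. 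It then remains to invoke that this radial cluster set is precisely the principal part $\Pi(\hat f(\underline x_\sigma))$ of the prime end, which is the standard description of principal points (see \cite{CL,Pommerenke}).

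I expect this last identification to be the main obstacle: one must handle prime end theory with care (the principal part can be a nondegenerate continuum and the radial limit need not exist), and justify cleanly that the hyperbolic-geodesic cluster set is neither too small --- it contains every principal point, because the geodesic is squeezed against the radius near $\sigma$ --- nor too large --- it is contained in $\Pi$ because the geodesic approaches $\sigma$ non-tangentially. Items (1)--(4), by contrast, are essentially bookkeeping once the disc model and the explicit description of its geodesics are in place.
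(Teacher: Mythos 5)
Your proposal is correct and follows exactly the route the paper itself indicates: the paper gives no detailed argument for this proposition, stating only that it follows ``using Riemann maps and the invariance of hyperbolic metric and distance under biholomorphisms,'' which is precisely the transfer-to-the-disc scheme you carry out. Your treatment of item (5) -- reducing the geodesic cluster set to the radial cluster set via the asymptotics of geodesics sharing an endpoint together with the Distance Lemma, and then invoking the standard identification of the radial cluster set with the principal points of the prime end (Pommerenke) -- is the right way to fill in the one genuinely non-trivial point, and the caveats you flag (unbounded domains, nondegenerate principal continua) are handled by exactly the care you describe.
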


Given a simply connected domain, it is in general a hard task to find geodesics. The aim of this section is to recall a powerful method due to Gromov to localize geodesics via simpler curves which are called quasi-geodesics.

\begin{definition}
Let $\Delta\subsetneq \C$ be a simply connected domain. Let $A\geq 1$ and $B\geq 0$. A piecewise $C^1$-smooth curve $\gamma:[a,b]\to \Delta$, $-\infty<a<b<+\infty$, is a {\sl $(A,B)$-quasi-geodesic} if  for every $a\leq s\leq t\leq b$,
\[
\ell_\Delta(\gamma; [s,t])\leq A k_\Delta(\gamma(s),\gamma(t))+B.
\]
\end{definition}

The importance of quasi-geodesics is partly justified by the following shadowing lemma (see, {\sl e.g.} \cite{Ghys}), known also as ``geodesics' stability lemma'':

\begin{theorem}[Gromov's shadowing lemma]\label{Gromov}
For every $A\geq 1$ and $B\geq 0$ there exists $\delta=\delta(A,B)>0$ with the following property. Let  $\Delta\subsetneq \C$ be any simply connected domain. If $\gamma:[a,b]\to \Delta$ is a $(A,B)$-quasi-geodesic, then there exists a geodesic $\tilde\gamma:[\tilde a, \tilde b]\to \Delta$ such that $\tilde\gamma(\tilde a)=\gamma(a)$, $\tilde\gamma(\tilde b)=\gamma(b)$ and for every $u\in [a,b]$ and $v\in [\tilde a, \tilde b]$,
\[
k_\Delta(\gamma(u), \tilde\gamma([\tilde a, \tilde b]))<\delta, \quad k_\Delta(\tilde\gamma(v),\gamma([ a,  b]))< \delta.
\]
\end{theorem}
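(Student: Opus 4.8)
The statement to prove is Gromov's shadowing lemma (Theorem \ref{Gromov}): for every $A\geq 1$, $B\geq 0$ there exists $\delta=\delta(A,B)>0$ such that in any simply connected domain $\Delta\subsetneq\C$, every $(A,B)$-quasi-geodesic is $\delta$-close (in Hausdorff sense with respect to $k_\Delta$) to a genuine geodesic with the same endpoints.

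The key point is that the result is *uniform over all simply connected domains*. The standard proof of the shadowing lemma works in any geodesic metric space that is $\delta_0$-hyperbolic in the sense of Gromov (thin triangles). So my plan has two parts.

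First, reduce to the disk. By the Riemann mapping theorem, if $f:\D\to\Delta$ is a biholomorphism, then $f$ is an isometry from $(\D,\omega)$ onto $(\Delta,k_\Delta)$, carrying geodesics to geodesics, hyperbolic lengths to hyperbolic lengths, and hence $(A,B)$-quasi-geodesics to $(A,B)$-quasi-geodesics. Therefore it suffices to prove the statement for $\Delta=\D$ with its hyperbolic distance $\omega$, and then the constant $\delta(A,B)$ obtained is automatically universal. (One has to be slightly careful with the case $\Delta=\C$, which is not simply connected in the relevant sense — actually $\C$ is simply connected but $\C\ne\C$ is excluded by hypothesis $\Delta\subsetneq\C$, so every $\Delta$ in the statement is hyperbolic and conformally equivalent to $\D$; Liouville is not an issue.)

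Second, prove the shadowing lemma in $(\D,\omega)$. This is classical. The two ingredients are: (i) $(\D,\omega)$ is a proper geodesic metric space which is Gromov hyperbolic — indeed it is $\delta_0$-hyperbolic with $\delta_0=\log 3$ or some absolute constant, because geodesic triangles in the hyperbolic plane are uniformly thin (each side lies in a bounded neighborhood of the union of the other two); (ii) in any $\delta_0$-hyperbolic geodesic space, quasi-geodesics stay within a bounded distance of geodesics with the same endpoints, the bound depending only on $A$, $B$, $\delta_0$. The standard argument for (ii): given the $(A,B)$-quasi-geodesic $\gamma:[a,b]\to\D$, pick a geodesic $\tilde\gamma$ joining $\gamma(a)$ to $\gamma(b)$ (exists and is essentially unique by Proposition \ref{Prop:geodesic-in-simply}). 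One shows $\gamma$ lies in a bounded neighborhood of $\tilde\gamma$ by the "projection + exponential divergence" scheme: suppose a point $\gamma(u)$ were at distance $>N$ from $\tilde\gamma$; consider the sub-arc of $\gamma$ around $u$ staying far from $\tilde\gamma$; its endpoints project onto $\tilde\gamma$ to points whose images can only be joined through the thin-triangle region, forcing the hyperbolic length of the sub-arc to grow exponentially in $N$, while the quasi-geodesic inequality forces it to grow only linearly — contradiction for $N$ large. The reverse inclusion ($\tilde\gamma$ in a neighborhood of $\gamma$) follows by a connectedness/continuity argument: any point of $\tilde\gamma$ not too close to the endpoints, if it were far from $\gamma$, would separate $\gamma(a)$ from $\gamma(b)$ along $\gamma$, contradicting that $\gamma$ is connected and already within a neighborhood of $\tilde\gamma$. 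I would cite \cite{Ghys} (already in the bibliography) for the precise combinatorial estimates rather than reproducing them.

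The main obstacle — or rather the main thing to get right — is the uniformity claim. It is tempting but wrong to think one needs a uniform hyperbolicity constant for all simply connected $\Delta$ *directly*; the clean route is the conformal reduction above, which makes the universal constant immediate because every hyperbolic simply connected domain is isometric to the single space $(\D,\omega)$. After that reduction, everything is a black-box application of the metric-space shadowing lemma in the fixed space $(\D,\omega)$, so no genuine difficulty remains; the only care needed is the bookkeeping that the isometry $f$ really does transport the quasi-geodesic inequality verbatim (same $A$, same $B$), which is immediate from $\ell_\Delta(f\circ\sigma;[s,t])=\ell_\D(\sigma;[s,t])$ and $k_\Delta(f(z),f(w))=\omega(z,w)$.
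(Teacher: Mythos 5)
Your proposal is correct and matches the paper's treatment: the paper states this result without proof, quoting it as the classical stability of quasi-geodesics from \cite{Ghys}, and your conformal reduction to $(\D,\omega)$ via a Riemann map (an isometry preserving lengths, distances and hence $(A,B)$-quasi-geodesics verbatim) is exactly the implicit step that makes the constant $\delta(A,B)$ uniform over all $\Delta\subsetneq\C$. The only bookkeeping worth noting is that the paper's length-based definition of quasi-geodesic is one-sided, but since $k_\Delta(\gamma(s),\gamma(t))\leq\ell_\Delta(\gamma;[s,t])$ always holds, it transports without change into the metric-space framework of the cited stability theorem.
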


The following result is a consequence of Gromov's shadowing lemma and follows by standard normality arguments:

\begin{corollary}\label{Cor:shadow}
Let  $\Delta\subsetneq \C$ be a simply connected domain. Let $\gamma:[0,+\infty)\to \Delta$ be a piecewise $C^1$-smooth curve such that $\lim_{t\to +\infty}k_\Delta(\gamma(0), \gamma(t))=+\infty$ and there exist $A\geq 1$, $B\geq 0$ such that for every fixed $T>0$ the curve $[0,T]\ni t\mapsto \gamma(t)$ is a $(A,B)$-quasi-geodesic. Then there exists a prime end $\underline{x}\in \partial_C\Delta$ such that $\gamma(t)\to \underline{x}$ in the Carath\'eodory topology of $\Delta$ as $t\to +\infty$. Moreover, there exists $\epsilon>0$ such that, if $\eta:[0,+\infty)\to \Delta$ is the geodesic of $\Delta$ parameterized by arc length such that $\eta(0)=\gamma(0)$ and $\lim_{t\to+\infty}\eta(t)=\underline{x}$ in the Carath\'eodory topology of $\Delta$, then, for every $t\in [0,+\infty)$,
\begin{equation*}
k_\Delta(\gamma(t), \eta([0,+\infty)))<\epsilon, \quad k_\Delta(\eta(t), \gamma([0,+\infty)))<\epsilon.
\end{equation*}
\end{corollary}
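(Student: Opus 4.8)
The statement to prove is Corollary~\ref{Cor:shadow}, which upgrades Gromov's shadowing lemma (Theorem~\ref{Gromov}) from finite quasi-geodesic segments to an infinite quasi-geodesic ray.

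The plan is the following. First I would set up a compactness/exhaustion argument. For each $n\in\N$, the restriction $\gamma|_{[0,n]}$ is an $(A,B)$-quasi-geodesic, so by Theorem~\ref{Gromov} there is a geodesic segment $\tilde\gamma_n:[0,L_n]\to\Delta$ (parameterized by arc length, say) with $\tilde\gamma_n(0)=\gamma(0)$, $\tilde\gamma_n(L_n)=\gamma(n)$, and both $\gamma|_{[0,n]}$ and $\tilde\gamma_n$ stay within the $\delta=\delta(A,B)$-neighborhood of each other's image. Since $k_\Delta(\gamma(0),\gamma(n))\to+\infty$, we have $L_n\to+\infty$. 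Now $\tilde\gamma_n$ is a unit-speed geodesic emanating from the fixed point $\gamma(0)$; transporting to $\D$ via a Riemann map $F:\D\to\Delta$ with $F(0)=\gamma(0)$, each $F^{-1}\circ\tilde\gamma_n$ is a unit-speed geodesic ray-segment from $0$, hence a radial segment $[0,L_n]\ni s\mapsto \rho(s)e^{i\theta_n}$ for a suitable monotone $\rho$. Passing to a subsequence, $\theta_n\to\theta_\infty$, and the segments converge locally uniformly to the full radial geodesic $\eta:=F(\rho(s)e^{i\theta_\infty})$, $s\in[0,+\infty)$, which is the geodesic of $\Delta$ from $\gamma(0)$ landing (in the Carath\'eodory topology, by Proposition~\ref{Prop:geodesic-in-simply}(1)) at some prime end $\underline{x}\in\partial_C\Delta$.

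Next I would transfer the shadowing bounds to the limit. Fix $t\geq0$ and pick $n>t$. Then $k_\Delta(\gamma(t),\tilde\gamma_n([0,L_n]))<\delta$, and since $\tilde\gamma_n\to\eta$ locally uniformly in $\Delta$ while the portion of $\tilde\gamma_n$ that could be close to the fixed point $\gamma(t)$ lies in a fixed compact set (distances are comparable to those along the radial geodesic, which is proper), letting $n\to\infty$ along the subsequence gives $k_\Delta(\gamma(t),\eta([0,+\infty)))\leq\delta$. For the reverse inequality, fix $t\geq0$; for $n$ large, $\eta([0,t])$ lies within (say) $\delta+1$ of $\tilde\gamma_n([0,L_n])$ — here one uses that $\eta$ and $\tilde\gamma_n$ are both unit-speed radial geodesics with nearby endpoints at infinity, so they fellow-travel on bounded sub-segments — and then $\tilde\gamma_n$ is within $\delta$ of $\gamma([0,n])\subset\gamma([0,+\infty))$. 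Combining, $k_\Delta(\eta(t),\gamma([0,+\infty)))<\epsilon$ for a uniform $\epsilon$ (e.g. $\epsilon=2\delta+1$). Finally, to see that $\gamma(t)\to\underline{x}$ in the Carath\'eodory topology: from $k_\Delta(\gamma(t),\eta([0,+\infty)))<\delta$ and $\lim_{t\to\infty}k_\Delta(\gamma(0),\gamma(t))=+\infty$, the closest point $\eta(s(t))$ to $\gamma(t)$ has $s(t)\to+\infty$, so $\eta(s(t))\to\underline{x}$; since $\gamma(t)$ stays at bounded hyperbolic distance from $\eta(s(t))$ and hyperbolic balls shrink to points of $\partial_C\Delta$ near the boundary (again via the Riemann map, Stolz-region comparison), $\gamma(t)\to\underline{x}$ as well.

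The main obstacle I expect is making the passage to the limit rigorous, specifically ensuring that the convergence $\tilde\gamma_n\to\eta$ is good enough to pass the shadowing inequalities through — i.e. controlling where on $\tilde\gamma_n$ a point can be $\delta$-close to a given fixed $\gamma(t)$, and the fellow-traveling of $\eta$ and $\tilde\gamma_n$ on bounded initial segments. This is exactly a normality argument: the $F^{-1}\circ\tilde\gamma_n$ form a normal family of radial geodesic segments from $0$, their images exhaust compacts of the limiting radial geodesic, and the hyperbolic distance $k_\Delta$ is continuous and proper, so the inequalities (which are closed conditions) survive the limit. Everything else is bookkeeping with Proposition~\ref{Prop:geodesic-in-simply} and the fact that geodesics of a simply connected domain are, after a Riemann map, Euclidean diameters of $\D$.
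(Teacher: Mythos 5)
Your proposal is correct and follows exactly the route the paper intends: the paper gives no detailed argument, stating only that the corollary ``follows by standard normality arguments'' from Gromov's shadowing lemma, and your proof is precisely that argument made explicit (apply Theorem \ref{Gromov} to $\gamma|_{[0,n]}$, pull the resulting geodesic segments from $\gamma(0)$ back to radial segments in $\D$ via a Riemann map, extract a convergent direction, and pass the two shadowing inequalities and the Carath\'eodory convergence to the limit). The key quantitative points — that the near-point on $\tilde\gamma_n$ to a fixed $\gamma(t)$ has arc-length parameter bounded by $k_\Delta(\gamma(0),\gamma(t))+\delta$ independently of $n$, and that radial segments with directions $\theta_n\to\theta_\infty$ converge uniformly on bounded parameter intervals — are handled correctly, so the uniform $\epsilon$ (e.g.\ $2\delta+1$) is justified.
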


\section{Localization of  hyperbolic metric and  hyperbolic distance}\label{loc}

In this section we prove a localization result which allows to get information on the hyperbolic metric and hyperbolic distance of a simply connected domain in a portion of the domain itself.

We start with the notion of totally geodesic subsets:

\begin{definition}
Let $\Delta\subsetneq \C$ be a simply connected domain. A  domain $U\subset \Delta$ is said to be {\sl totally geodesic} in $\Delta$ if for every $z, w\in U$ the geodesic of $\Delta$ joining $z$ and $w$ is contained in $U$.
\end{definition}

We need  the following lemma:

\begin{lemma}\label{Lem:total-geo-disc}
Let $\Delta\subsetneq \C$ be a simply connected domain. Let $\gamma:\R\to \Delta$ be a geodesic parameterized by arc length. Then $\Delta\setminus\gamma(\R)$ consists of two simply connected components which are totally geodesic in $\Delta$.
\end{lemma}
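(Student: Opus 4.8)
The plan is to transfer everything to the model case of the unit disc via a Riemann map $f:\D\to\Delta$. By the invariance of the hyperbolic distance and of geodesics under biholomorphisms, $\eta:=f^{-1}\circ\gamma:\R\to\D$ is a geodesic of $\D$ parameterized by arc length, and a subset $U\subset\Delta$ is totally geodesic in $\Delta$ if and only if $f^{-1}(U)$ is totally geodesic in $\D$; also $U$ is a domain iff $f^{-1}(U)$ is. So it suffices to prove the statement for $\D$. In $\D$, geodesics are precisely (arcs of) diameters and circular arcs orthogonal to $\partial\D$; a complete geodesic $\eta(\R)$ is therefore the intersection with $\D$ of such a circle (or line) $C$. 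Since $\eta$ is parameterized by arc length on all of $\R$ and $\omega$ is a complete metric, the two endpoints $\sigma_1=\lim_{t\to-\infty}\eta(t)$ and $\sigma_2=\lim_{t\to+\infty}\eta(t)$ exist on $\partial\D$ and are distinct (this is part (2) of Proposition~\ref{Prop:geodesic-in-simply}, applied in $\D$). Hence $C\cap\D$ is a crosscut of $\D$, and $\D\setminus\eta(\R)$ has exactly two connected components $D_1, D_2$, each of which is a Jordan subdomain of $\D$, hence simply connected.

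It remains to check that each $D_j$ is totally geodesic in $\D$. I would do this purely geometrically: the geodesic of $\D$ joining two points $z,w\in D_j$ is the unique arc $\Gamma_{z,w}$ of the circle/line orthogonal to $\partial\D$ through $z$ and $w$, lying in $\D$ with endpoints on $\partial\D$. Two distinct "geodesic circles" orthogonal to $\partial\D$ meet $\D$ in at most one common point, so either $\Gamma_{z,w}$ coincides with $\eta(\R)$ — impossible since $z\in D_j$ is not on $\eta(\R)$ — or $\Gamma_{z,w}$ crosses $\eta(\R)$ in at most one point of $\D$. If it crossed $\eta(\R)$ transversally at an interior point, then $z$ and $w$ would lie on opposite sides of the crosscut $\eta(\R)$, i.e. in different components, contradicting $z,w\in D_j$. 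If the two circles were tangent at a point of $\partial\D$, they would still not meet inside $\D$, so again $\Gamma_{z,w}\cap\eta(\R)=\emptyset$ and $\Gamma_{z,w}$ lies entirely in one component, which must be $D_j$. Either way $\Gamma_{z,w}\subset D_j$, so $D_j$ is totally geodesic. Transporting back by $f$ finishes the proof.

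\textbf{Main obstacle.} The step I expect to require the most care is the elementary plane-geometry claim that two distinct circles both orthogonal to $\partial\D$ intersect $\D$ in at most one common point, together with the bookkeeping that a transversal interior intersection genuinely separates $z$ from $w$ into opposite components of $\D\setminus\eta(\R)$. A clean way to organize this is to pick a third point $\zeta\in\eta(\R)$ (say $\zeta=\eta(0)$) and apply a Möbius automorphism of $\D$ sending $\zeta$ to $0$; then $\eta(\R)$ becomes a diameter, $\D\setminus\eta(\R)$ becomes two obvious half-discs, and the orthogonality of $\Gamma_{z,w}$ to $\partial\D$ forces it to stay in one half-disc unless it passes through $0$ — but passing through $0$ would make it a diameter and hence would force it to equal $\eta(\R)$, contradicting $z\notin\eta(\R)$. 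This normalization reduces the whole argument to an essentially trivial observation about half-discs, at the cost only of invoking the transitivity of $\Aut$ on $\D$.
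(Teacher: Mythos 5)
Your reduction to $\D$ via the Riemann map, the identification of $\eta(\R)$ as a crosscut with two distinct endpoints on $\partial\D$, and the conclusion that $\D\setminus\eta(\R)$ has exactly two simply connected components are all fine, and this is exactly the paper's route. The problem is in the totally-geodesic step: you take $\Gamma_{z,w}$ to be the \emph{complete} geodesic through $z,w$ (endpoints on $\partial\D$) and claim $\Gamma_{z,w}\subset D_j$, arguing that a transversal crossing of $\eta(\R)$ would force $z$ and $w$ onto opposite sides. That inference is false, and so is the claim you are trying to prove in that form: take $\eta(\R)=(-1,1)$, $z=\tfrac14 e^{i\pi/4}$, $w=\tfrac12 e^{i\pi/4}$; both lie in the upper half-disc, but the complete geodesic through them is the diameter $\{te^{i\pi/4}:t\in(-1,1)\}$, which crosses $\eta(\R)$ transversally at $0$ and enters the lower half-disc. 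The same example kills the sub-claim in your normalized version that ``passing through $0$ would force it to equal $\eta(\R)$'', and the other sub-claim there --- that a circle orthogonal to $\partial\D$ stays in one half-disc unless it passes through $0$ --- is also false: the circle $|\zeta-\tfrac54|=\tfrac34$ is orthogonal to $\partial\D$, meets $(-1,1)$ at $\tfrac12$, and enters both half-discs. The crossing point of $\Gamma_{z,w}$ with $\eta(\R)$ simply need not lie between $z$ and $w$, so no separation follows.

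The repair is short, because total geodesy (as defined in the paper) only requires the geodesic \emph{segment} from $z$ to $w$ to stay in $D_j$, not the complete arc. Argue with that segment: if it met $\eta(\R)$ at some point $p$, then $p\neq z,w$, so $p$ is interior to the segment; two distinct complete geodesics of $\D$ meet in at most one point of $\D$ and cannot be tangent there (both circles are invariant under inversion in $\partial\D$, so an interior tangency would force them to coincide), hence the segment crosses the crosscut $\eta(\R)$ exactly once and transversally, and now Jordan separation really does put its endpoints $z$ and $w$ in different components of $\D\setminus\eta(\R)$ --- a contradiction. So the segment avoids $\eta(\R)$ and, being connected, lies in $D_j$. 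With this correction your argument coincides in substance with the paper's proof, which likewise transfers to $\D$, normalizes $\eta(\R)$ to a diameter by an automorphism, and observes that the two half-discs it bounds are totally geodesic because geodesics of $\D$ are circular arcs orthogonal to $\partial\D$.
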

\begin{proof}
Let $f:\D\to \Delta$ be a biholomorphism. Then, $f^{-1}\circ \gamma:\R \to \D$ is a geodesic parameterized by arc length. Up to pre-composing with an automorphism of $\D$, we can assume that $f^{-1}(\gamma(\R))=(-1,1)$.
Consider $\D^{+}:=\{\zeta\in \D: \Re \zeta>0\}$. Since the geodesic in $\D$ joining two points $z,w\in \D$ is the arc of a circle containing $z, w$ and meeting $\partial \D$ orthogonally, it is easy to see that $\D^+$ is totally geodesic in $\D$. A similar argument shows that  $\D^{-}:=\{\zeta\in \D: \Re \zeta<0\}$ is totally geodesic. Moving back to $\Delta$ via $f$ and recalling that $f$ is an isometry for the hyperbolic distance, we have the result.
\end{proof}

Now we can state and prove a localization result for the hyperbolic metric and the hyperbolic distance:

\begin{theorem}[Localization Lemma]\label{Thm:localiz}
Let $\Delta\subsetneq \C$ be a simply connected domain. Let $p\in \partial_C\Delta$ and let $U^\ast$ be an  open set in $\widehat{\Delta}$ which contains $p$. Assume that $U^\ast\cap \Delta$ is simply connected. Let $C>1$. Then there exists an open neighborhood  $V^\ast\subset U^\ast$ of $p$ such that for every $z, w\in V^\ast\cap \Delta$ and all $v\in \C$,
\begin{equation}\label{Eq:localization1}
\kappa_\Delta(z;v)\leq \kappa_{U^\ast\cap \Delta}(z;v)\leq C \kappa_{\Delta}(z;v),
\end{equation}
\begin{equation}\label{Eq:localization2}
k_\Delta(z,w)\leq k_{U^\ast\cap \Delta}(z,w)\leq C k_\Delta(z,w).
\end{equation}
In particular, if $\Delta$ is a Jordan domain then for every $\sigma\in \partial_\infty\Delta$, for every $U\subset \C_\infty$ open set such that $\sigma\in U$ and $U\cap \Delta$ is simply connected, and every $C>1$, there exists an open neighborhood $V\subset U$ of $\sigma$ such that \eqref{Eq:localization1} and \eqref{Eq:localization2} hold (with $U^\ast=U$ and $V^\ast=V$).
\end{theorem}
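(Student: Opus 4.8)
The plan is to reduce everything to the disc via a Riemann map, exploit the trivial monotonicity inequality for the hyperbolic metric, and then fight only for the reverse inequality, where the competitor metric $\kappa_{U^\ast\cap\Delta}$ must be bounded above by $C\kappa_\Delta$ near $p$. The left-hand inequalities in \eqref{Eq:localization1} and \eqref{Eq:localization2} are immediate: since $U^\ast\cap\Delta\subset\Delta$, the decreasing property of the hyperbolic metric under inclusions gives $\kappa_\Delta\le\kappa_{U^\ast\cap\Delta}$ pointwise, and integrating along curves yields $k_\Delta\le k_{U^\ast\cap\Delta}$. Moreover the metric inequality \eqref{Eq:localization1}, once established on $V^\ast\cap\Delta$, integrates along any curve \emph{staying inside} $V^\ast\cap\Delta$ to give \eqref{Eq:localization2}; the subtlety is that a $\Delta$-geodesic between two points of $V^\ast\cap\Delta$ need not remain in $V^\ast$, so one must either shrink $V^\ast$ so that it is totally geodesic in $\Delta$ (which is where Lemma~\ref{Lem:total-geo-disc} and the prime-end structure enter) or argue directly on the comparison of distances. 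I expect the cleanest route is: first prove the metric estimate \eqref{Eq:localization1} on a neighborhood $W^\ast$, then choose $V^\ast\subset W^\ast$ small enough that geodesics of $\Delta$ between points of $V^\ast\cap\Delta$ stay in $W^\ast\cap\Delta$, using the fact that such a geodesic converges in the Carathéodory topology to the prime end $p$ at both ends when $V^\ast$ shrinks to $p$ (Proposition~\ref{Prop:geodesic-in-simply}).

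For the metric estimate, transport to the disc: let $F:\D\to\Delta$ be a Riemann map, which extends to a homeomorphism $\widehat F:\widehat\D\to\widehat\Delta$; set $\sigma:=\widehat F^{-1}(p)\in\partial\D$, and let $\Omega:=F^{-1}(U^\ast\cap\Delta)$, a simply connected domain whose closure in $\widehat\D$ contains $\sigma$ as an interior point of $\Omega^\ast$. Since $F$ is a hyperbolic isometry, \eqref{Eq:localization1} is equivalent to the statement that for $\zeta$ near $\sigma$ in $\Omega$ one has $\kappa_\Omega(\zeta;v)\le C\,\kappa_\D(\zeta;v)=C\,|v|/(1-|\zeta|^2)$; the lower bound $\kappa_\D\le\kappa_\Omega$ is automatic. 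Now apply a second Riemann map $G:\D\to\Omega$ with, say, $\widehat G(1)=\sigma$. By Carathéodory's prime-end theorem, $\widehat G$ is a homeomorphism near $1$, and because $U^\ast$ is a neighborhood of $p$ in $\widehat\Delta$, the image $\widehat G$ of a small arc of $\partial\D$ around $1$ coincides (as a set of prime ends of $\Omega$, hence of $\D$ after $F$) with an arc of $\partial\D$ around $\sigma$; concretely, the boundary of $\Omega$ near $\sigma$ is just an arc of $\partial\D$. The Schwarz–Pick/boundary comparison then says $\kappa_\Omega(\zeta;v)=|v|/|G'(\eta)|(1-|\eta|^2)$ with $\eta=G^{-1}(\zeta)$, and the ratio $\kappa_\Omega/\kappa_\D$ at $\zeta$ equals $(1-|\zeta|^2)/\big(|G'(\eta)|(1-|\eta|^2)\big)$, which by the classical Carathéodory–Osgood type estimate (or directly by the Koebe distortion theorem applied where the boundaries agree) tends to $1$ as $\zeta\to\sigma$ along $\Omega$. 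Hence given $C>1$ there is a neighborhood $W^\ast$ of $p$ on which the ratio is $\le C$, proving \eqref{Eq:localization1}.

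The main obstacle, and the step deserving care, is passing from the infinitesimal inequality \eqref{Eq:localization1} to the integrated inequality \eqref{Eq:localization2} with the \emph{same} constant $C$, because one cannot integrate $\kappa_{U^\ast\cap\Delta}$ along a $\Delta$-geodesic that leaves $V^\ast$. I would handle this by first fixing a neighborhood $W^\ast\subset U^\ast$ of $p$ on which \eqref{Eq:localization1} holds with constant $C$, and then choosing $V^\ast\subset W^\ast$ so small that every $\Delta$-geodesic joining two points of $V^\ast\cap\Delta$ lies entirely inside $W^\ast\cap\Delta$. The existence of such $V^\ast$ follows from Proposition~\ref{Prop:geodesic-in-simply}: a geodesic joining $z,w\in\Delta$ close to the prime end $p$ must have both endpoints, in the Carathéodory topology, close to $p$, and by continuity and the essential uniqueness of geodesics its whole image is trapped in any prescribed Carathéodory neighborhood of $p$ once $z,w$ are close enough — formally, if this failed, a normal-families/compactness argument on geodesics would produce a limiting geodesic through $p$ escaping $W^\ast$, contradicting that $W^\ast$ is a neighborhood of $p$. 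For $z,w\in V^\ast\cap\Delta$, let $\gamma$ be the $\Delta$-geodesic joining them; it lies in $W^\ast\cap\Delta$, so
\[
k_{U^\ast\cap\Delta}(z,w)\le k_{W^\ast\cap\Delta}(z,w)\le \ell_{W^\ast\cap\Delta}(\gamma)\le \int \kappa_{W^\ast\cap\Delta}(\gamma;\dot\gamma)\le C\int\kappa_\Delta(\gamma;\dot\gamma)=C\,k_\Delta(z,w),
\]
where the first step uses $U^\ast\cap\Delta\supset W^\ast\cap\Delta$ hence $k_{U^\ast\cap\Delta}\le k_{W^\ast\cap\Delta}$, and the last step uses \eqref{Eq:localization1} valid on $W^\ast$. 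This gives \eqref{Eq:localization2} on $V^\ast\cap\Delta$. Finally, the ``In particular'' clause for Jordan domains is just the special case where $\partial_C\Delta=\partial_\infty\Delta$ and the Carathéodory topology on $\widehat\Delta$ agrees with the Euclidean topology on $\overline\Delta$ (resp. $\overline\Delta\cup\{\infty\}$), so that open sets $U\subset\C_\infty$ with $\sigma\in U$ and $U\cap\Delta$ simply connected furnish admissible $U^\ast$, and $V^\ast$ can be taken of the form $V\cap\widehat\Delta$ with $V\subset U$ open in $\C_\infty$.
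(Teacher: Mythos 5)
Your overall architecture is the same as the paper's: transfer everything to $\Delta=\D$ via the homeomorphism $\widehat{\D}\cong\overline{\D}$, so that $U^\ast\cap\Delta$ becomes $W\cap\D$ for a Euclidean neighborhood $W$ of $\sigma\in\partial\D$; prove the infinitesimal inequality near $\sigma$; then trap $\D$-geodesics joining points near $\sigma$ inside the region where that inequality holds and integrate. The genuine gap is in the central infinitesimal estimate. You need $\kappa_{W\cap\D}(z;v)\leq C\,\kappa_\D(z;v)$ near $\sigma$ for the \emph{given} $C>1$, i.e.\ a constant arbitrarily close to $1$, and you only assert that $\kappa_{W\cap\D}/\kappa_\D\to1$ at $\sigma$ ``by a Carath\'eodory--Osgood type estimate or by Koebe distortion where the boundaries agree''. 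Neither tool delivers this: Carath\'eodory--Osgood is a purely topological extension theorem with no density estimate, and the Koebe/distortion theorems only give two-sided bounds with absolute constants (a factor $4$), so they cannot produce constants tending to $1$. The claim you want is true, but it requires a real argument --- e.g.\ comparison with the lens $\D\cap B(\sigma,r)\subset W\cap\D$ together with boundary regularity of the conformal map across the common analytic arc (Schwarz reflection), or, as the paper does, the elementary observation that for $z$ close enough to $\sigma$ the hyperbolic disc $D^{hyp}(z,R)$ is contained in $W\cap\D$, whence $\kappa_{W\cap\D}(z;v)\leq\kappa_{D^{hyp}(z,R)}(z;v)=(\tanh R)^{-1}\kappa_\D(z;v)$, with $(\tanh R)^{-1}<C$ once $R$ is large. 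As written, the key quantitative step is asserted, not proved.

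Two smaller repairs are needed as well. In your integration chain you invoke \eqref{Eq:localization1} to bound $\kappa_{W^\ast\cap\Delta}$ by $C\kappa_\Delta$, but what you established on $W^\ast$ is the bound for $\kappa_{U^\ast\cap\Delta}$, which is the \emph{smaller} of the two metrics; either integrate $\kappa_{U^\ast\cap\Delta}$ directly along the trapped geodesic (which suffices, and is what the paper effectively does), or insert a third nested neighborhood. Also, the trapping of $\Delta$-geodesics with endpoints near $p$ inside $W^\ast$ is justified in your sketch only by a vague normal-families appeal to a ``limiting geodesic through $p$''; the clean mechanism --- which you half-mention --- is the paper's: in the disc take the geodesic joining $e^{i\epsilon}\sigma$ to $e^{-i\epsilon}\sigma$ for small $\epsilon$ and use Lemma \ref{Lem:total-geo-disc} to obtain a totally geodesic region containing all points of $\D$ sufficiently close to $\sigma$ and contained in the good set, so geodesics between such points never leave it. With the infinitesimal estimate actually proved and these two points fixed, your plan coincides with the paper's proof rather than providing an alternative to it.
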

\begin{proof}
The inequalities on the left in \eqref{Eq:localization1} and \eqref{Eq:localization2} follow immediately from the decreasing properties of the infinitesimal metric and of the distance.

As for the inequalities on the right, it is enough to prove them for $\Delta=\D$. The identity map extends to a homeomorphism $\Phi$ between $\widehat{\D}$ and $\overline{\D}$. Hence, there exists an open set (in the Euclidean topology) $W\subset \C$ such that $\sigma:=\Phi(p)\in W$ and $\Phi(U^\ast)=W\cap \overline \D$.  Since by hypothesis $U^\ast\cap \D$ is simply connected, then $\Phi(U^\ast\cap \D)=W\cap \D$ is simply connected as well.

Now we use ideas probably well known to experts. However, we give a sketch here for the reader's convenience. First, one can prove that given $R>0$ such that $(\tanh R)^{-1}<C$, there exists  an open set $X\subset W$, $\sigma\in X$, such that for every $z\in X\cap \D$ the hyperbolic disc $D^{hyp}(z,R):=\{w \in \D:k_{\D}(z,w) < R\}$ is contained in $W\cap \D$. This implies immediately that for all $z\in X\cap \D$ and $v\in \C$,
\begin{equation}\label{Eq:prima-mericaW}
\kappa_{W\cap \D}(z;v)\leq \kappa_{D^{hyp}(z,R)}(z;v)=(\tanh R)^{-1} \kappa_\D(z;v)<C \kappa_\D(z;v).
\end{equation}

Then, one can find $\epsilon\in (0,\pi/4)$ in such a way that, if   $\gamma:(-\infty,+\infty)\to \D$ is the geodesic in $\D$ parameterized by arc length such that $\lim_{t\to-\infty}\gamma(t)=e^{\epsilon i}\sigma $ and $\lim_{t\to+\infty}\gamma(t)=e^{-\epsilon i}\sigma$ then $\gamma(\R)\subset X$.

By Lemma \ref{Lem:total-geo-disc}, $\D\setminus \gamma(\R)$ is the union of two simply connected components. Since $\overline{\gamma(\R)}$ does not contain $\sigma$, it follows that $\sigma$ belongs to the closure of one and only one of the connected components of  $\D\setminus \gamma(\R)$. Call   $Y$  such a component.  By Lemma \ref{Lem:total-geo-disc}, $Y$ is totally geodesic in $\D$.  Therefore, for every $z,w\in Y$, the geodesic $\eta:[0,1]\to \D$ of $\D$ such that $\eta(0)=z, \eta(1)=w$ is contained in $Y\subset X\cap \D$. Hence, by \eqref{Eq:prima-mericaW},
\begin{equation*}
\begin{split}
k_{W\cap \D}(z,w)&\leq \ell_{W\cap \D}(\eta;[0,1])=\int_0^1\kappa_{W\cap \D}(\eta(t);\eta'(t))dt\\&\leq C\int_0^1\kappa_\D(\eta(t);\eta'(t))dt=Ck_\D(z,w).
\end{split}
\end{equation*}
By the arbitrariness of $z,w$, setting $V^\ast:=\Phi^{-1}(\tilde{Y}\cap \overline{\D})$, where $\tilde{Y}$ is any open set in $\C$ such that $\tilde{Y}\cap \D=Y$, we are done.

Finally, if $\Delta$ is a Jordan domain, the result follows since  $\overline{\Delta}^\infty$ and $\widehat{\Delta}$ are homeomorphic.
\end{proof}

If $\Omega\subset \C$ is a domain, for $z\in \Omega$, we let
\[
\delta_\Omega(z):=\inf_{w\in \C\setminus \Omega} |z-w|,
\]
the Euclidean distance from $z$ to the boundary $\partial \Omega$.

\begin{theorem}\label{Thm:Distance-Lemma-inf} Let $\Delta\subsetneq \C$ be a simply connected domain. Then for every $z\in \Delta$ and $v\in \C$,
\[
 \frac{|v|}{4\delta_\Delta(z)}\leq \kappa_\Delta(z;v)\leq \frac{|v|}{\delta_\Delta(z)}.
\]
Moreover, if $\Delta$ is convex, $ \kappa_\Delta(z;v)\geq  \frac{|v|}{2\delta_\Delta(z)}$ for every $z\in \Delta$ and $v\in \C$.
\end{theorem}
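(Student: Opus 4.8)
The plan is to deduce each of the three inequalities by comparing $\Delta$ with an explicit model domain---a Euclidean disc or a half-plane---using the decreasing property of the infinitesimal metric under inclusion of domains, together with the normalization $\kappa_\Delta(z;v)=|v|/f'(0)$ for $f\colon\D\to\Delta$ biholomorphic with $f(0)=z$, $f'(0)>0$. For the right-hand inequality I would note that, by definition of $\delta_\Delta(z)$, the Euclidean disc $D:=\{\zeta\in\C:|\zeta-z|<\delta_\Delta(z)\}$ is contained in $\Delta$; since $\zeta\mapsto z+\delta_\Delta(z)\zeta$ is a biholomorphism of $\D$ onto $D$ taking $0$ to $z$ with derivative $\delta_\Delta(z)$ at the origin, $\kappa_D(z;v)=|v|/\delta_\Delta(z)$, and monotonicity gives $\kappa_\Delta(z;v)\le\kappa_D(z;v)=|v|/\delta_\Delta(z)$.

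For the left-hand inequality I would invoke the Koebe $1/4$-theorem. Let $f\colon\D\to\Delta$ be the Riemann map with $f(0)=z$, $f'(0)>0$, so that $\kappa_\Delta(z;v)=|v|/f'(0)$. The map $\zeta\mapsto(f(\zeta)-z)/f'(0)$ is univalent on $\D$, fixes $0$, and has derivative $1$ there, so Koebe's theorem yields $\Delta=f(\D)\supset\{\zeta:|\zeta-z|<f'(0)/4\}$. Hence $\delta_\Delta(z)\ge f'(0)/4$, i.e. $1/f'(0)\ge 1/(4\delta_\Delta(z))$, and multiplying by $|v|$ gives $\kappa_\Delta(z;v)\ge|v|/(4\delta_\Delta(z))$.

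For the convex case I would replace the disc by a half-plane. Pick $w_0\in\partial\Delta$ with $|z-w_0|=\delta_\Delta(z)$, which exists since $\partial\Delta$ is closed and nonempty and $z\notin\partial\Delta$. The open disc $B:=\{\zeta:|\zeta-z|<\delta_\Delta(z)\}$ lies in $\Delta$ and $w_0\in\partial B$. By convexity of $\Delta$ there is a supporting line $L$ of $\Delta$ at $w_0$; let $H$ be the open half-plane bounded by $L$ with $\Delta\subset H$. Then $B\subset\Delta\subset H$ and $w_0\in L\cap\partial B$, so $L$ is tangent to $B$ at $w_0$; consequently the segment $[z,w_0]$ is orthogonal to $L$ and $\dist(z,L)=|z-w_0|=\delta_\Delta(z)$, i.e. $\delta_H(z)=\delta_\Delta(z)$. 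Using the biholomorphism $\zeta\mapsto i(1-\zeta)/(1+\zeta)$ of $\D$ onto $\{\Im\zeta>0\}$ (which sends $0$ to $i$ with derivative of modulus $2$), together with a Euclidean similarity, one computes $\kappa_H(\xi;v)=|v|/(2\delta_H(\xi))$ for every half-plane $H$ and every $\xi\in H$. Since $\Delta\subset H$, monotonicity gives $\kappa_\Delta(z;v)\ge\kappa_H(z;v)=|v|/(2\delta_\Delta(z))$.

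There is no real obstacle here: the result is classical, and the only points requiring attention are the bookkeeping of the metric normalization (so that the comparison constants come out as $1$ for the disc and $1/2$ for the half-plane, rather than the curvature-normalized values $2$ and $1$) and the elementary geometric observation that a supporting line at a nearest boundary point of a convex domain realizes the Euclidean distance from the interior point to the boundary. The only external input is the Koebe $1/4$-theorem, which I would cite as a standard fact.
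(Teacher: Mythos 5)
Your proposal is correct and follows essentially the same route as the paper's (sketched) proof: the upper bound from the inscribed disc and monotonicity, the lower bound from the Koebe $1/4$-theorem applied to the normalized Riemann map, and the convex case from a supporting half-plane at a nearest boundary point, for which $\delta_H(z)=\delta_\Delta(z)$ and $\kappa_H(z;v)=|v|/(2\delta_H(z))$. Your version merely supplies the normalization bookkeeping and the tangency argument that the paper leaves implicit.
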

\begin{proof}[Sketch of the proof]
The lower estimate follows form the Koebe $1/4$-Theorem. The upper estimate follows at once since the Euclidean disc of center $z$ and radius $\delta_\Delta(z)$ is contained in $\Delta$.

In case $\Delta$ is convex, take $z\in \Delta$ and let $p\in \partial \Delta$ be a point such that $|p-z|=\delta_\Delta(z)$. By convexity, $\Delta$ is contained in a half-plane whose boundary is a separating line for $\Delta$ at $p$. From this the lower estimate follows at once.
\end{proof}

Integrating the previous estimates, one has:

\begin{theorem}[Distance Lemma]\label{Thm:Distance-Lemma} Let $\Delta\subsetneq \C$ be a simply connected domain. Then for every $w_1, w_2\in \Delta$,
\[
 \frac{1}{4} \log \left(1+\frac{|w_1-w_2|}{\min\{\delta_\Delta(w_1), \delta_\Delta(w_2)\}} \right)\leq k_\Delta(w_1,w_2)\leq \int_{\Gamma}\frac{|dw|}{\delta_\Delta(w)},
\]
where $\Gamma$ is any piecewise $C^1$-smooth curve in $\Delta$ joining $w_1$ to $w_2$.

In case $\Delta$ is convex, one can replace $1/4$ with $1/2$ in the left-hand side of the previous inequality.
\end{theorem}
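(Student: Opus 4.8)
The statement to be proved is the Distance Lemma (Theorem~\ref{Thm:Distance-Lemma}), asserting that for a simply connected $\Delta\subsetneq\C$ and $w_1,w_2\in\Delta$,
\[
\frac14\log\Bigl(1+\frac{|w_1-w_2|}{\min\{\delta_\Delta(w_1),\delta_\Delta(w_2)\}}\Bigr)\le k_\Delta(w_1,w_2)\le\int_\Gamma\frac{|dw|}{\delta_\Delta(w)},
\]
with $1/4$ improved to $1/2$ when $\Delta$ is convex, and the upper bound holding over any piecewise $C^1$ curve $\Gamma$ joining $w_1$ to $w_2$. The idea is simply to integrate the pointwise bounds already established in Theorem~\ref{Thm:Distance-Lemma-inf}, namely $\frac{|v|}{4\delta_\Delta(z)}\le\kappa_\Delta(z;v)\le\frac{|v|}{\delta_\Delta(z)}$ (with the constant $1/4$ replaced by $1/2$ in the convex case). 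The upper bound is immediate: for any admissible curve $\Gamma:[0,1]\to\Delta$ joining $w_1$ to $w_2$, the definition of $k_\Delta$ as the infimum of hyperbolic length gives $k_\Delta(w_1,w_2)\le\ell_\Delta(\Gamma)=\int_0^1\kappa_\Delta(\Gamma(t);\Gamma'(t))\,dt\le\int_0^1\frac{|\Gamma'(t)|}{\delta_\Delta(\Gamma(t))}\,dt=\int_\Gamma\frac{|dw|}{\delta_\Delta(w)}$.

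For the lower bound, let $\Gamma:[0,1]\to\Delta$ be any piecewise $C^1$ curve with $\Gamma(0)=w_1$, $\Gamma(1)=w_2$; it suffices to bound its hyperbolic length from below and then take the infimum. Assume without loss of generality $\delta_\Delta(w_1)\le\delta_\Delta(w_2)$, and set $d:=\delta_\Delta(w_1)$. The key observation is the elementary Lipschitz estimate $\delta_\Delta(\Gamma(t))\le d+|\Gamma(t)-w_1|\le d+L(t)$, where $L(t):=\int_0^t|\Gamma'(s)|\,ds$ is the Euclidean arclength of $\Gamma$ up to time $t$ (this follows from the triangle inequality for the distance-to-complement function). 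Therefore, using the pointwise lower bound from Theorem~\ref{Thm:Distance-Lemma-inf},
\[
\ell_\Delta(\Gamma)=\int_0^1\kappa_\Delta(\Gamma(t);\Gamma'(t))\,dt\ge\frac14\int_0^1\frac{|\Gamma'(t)|}{\delta_\Delta(\Gamma(t))}\,dt\ge\frac14\int_0^1\frac{L'(t)}{d+L(t)}\,dt=\frac14\log\frac{d+L(1)}{d}.
\]
Since $L(1)\ge|w_1-w_2|$, this gives $\ell_\Delta(\Gamma)\ge\frac14\log\bigl(1+\frac{|w_1-w_2|}{d}\bigr)$, and taking the infimum over all such $\Gamma$ yields the claimed lower bound for $k_\Delta(w_1,w_2)$. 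In the convex case one runs the identical computation with the improved constant $1/2$ from Theorem~\ref{Thm:Distance-Lemma-inf}, giving the factor $1/2$ in front of the logarithm.

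**Main obstacle.** There is no serious obstacle here: the argument is a textbook integration of the infinitesimal estimates. The only point requiring a modicum of care is the monotone substitution $L'(t)/(d+L(t))$ and the fact that $\delta_\Delta\circ\Gamma$ may be genuinely smaller than $d+L(t)$ (hence the inequality goes the right way), together with the remark that $L$ is nondecreasing and absolutely continuous so that the change of variables / direct integration is legitimate even when $\Gamma$ is only piecewise $C^1$. One should also note that the lower bound does not require taking an infimum over curves in a subtle way—any fixed $\Gamma$ already satisfies it—so the estimate for $k_\Delta$ is immediate once one recalls $k_\Delta=\inf_\Gamma\ell_\Delta(\Gamma)$.
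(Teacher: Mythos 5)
Your proof is correct and is exactly the argument the paper intends: the paper states the Distance Lemma with only the remark ``Integrating the previous estimates, one has,'' and your write-up (upper bound from the definition of $k_\Delta$ together with the upper estimate of Theorem~\ref{Thm:Distance-Lemma-inf}; lower bound via the Lipschitz inequality $\delta_\Delta(\Gamma(t))\le \delta_\Delta(w_1)+L(t)$ and the integration of $L'(t)/(d+L(t))$, with $1/2$ in the convex case) is the standard way of carrying out that integration. No gaps to report.
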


\section{Hyperbolic sectors and non-tangential limits}\label{hypse}

The aim of this section is to provide an intrinsic way to define non-tangential limits in simply connected domains. More precisely, the question we settle here is the following: let $\Delta \subsetneq \C$ be a simply connected domain and $f:\D\to \Delta$ a Riemann map. Let
$\{z_n\}\subset \Delta$ be a sequence such that $\{f^{-1}(z_n)\}$ converges to $\sigma\in \partial \D$. How is it possible to determine whether $\{f^{-1}(z_n)\}$ converges non-tangentially to $\sigma$ by looking at the geometry of $\Delta$?

We start with a definition which allows to extend the notion of non-tangential limit to any simply connected domain:

\begin{definition}
Let  $\Delta\subsetneq \C$ be a simply connected domain. Let $\gamma:(a,+\infty)\to \Delta$, $a\geq -\infty$, be a geodesic with the property that $\lim_{t\to +\infty}k_\Delta(\gamma(t),\gamma(t_{0}))=+\infty$, for some $t_{0}\in (a,+\infty)$. A {\sl hyperbolic sector around $\gamma$ of amplitude $R>0$} is
\[
S_\Delta(\gamma, R):=\{w\in \Delta: k_\Delta(w, \gamma((a,+\infty)))<R\}.
\]
\end{definition}

Now we aim to give a description of hyperbolic sectors. To this aim, it is useful to move our considerations to the right half-plane, where actual computations turn out to be easier.

Let $\Ha:=\{z \in \C : \Re(z) > 0\}$. The map $z\mapsto \frac{1+z}{1-z}$ is a biholomorphism between $\D$ and $\Ha$. Hence, a direct computation shows that for $z,w\in \Ha$, $v\in \C$
\[
\kappa_\Ha(z;v)=\frac{|v|}{2\Re z}, \quad k_\Ha(z,w)=\frac{1}{2}\log \frac{1+\left|\frac{z-w}{z+\overline{w}} \right|}{1-\left|\frac{z-w}{z+\overline{w}} \right|}.
\]
Moreover, since $\D$ and $\Ha$ are biholomorphic via a Moebius transformation,  it follows easily that the geodesics in $\Ha$ are either intervals contained in semi-lines in $\Ha$ parallel to the real axis, or arcs in $\Ha$ of circles intersecting orthogonally the imaginary axis.

\begin{lemma}\label{Lem:hyper-semipiano}
Let $\beta\in (-\frac{\pi}{2},\frac{\pi}{2})$.
\begin{enumerate}
\item Let  $0<\rho_0<\rho_1$ and let $\Gamma:=\{\rho e^{i\beta}: \rho_0\leq \rho\leq \rho_1\}$. Then, $\displaystyle{\ell_{\Ha}(\Gamma)=\frac{1}{2\cos \beta}\log\frac{\rho_1}{\rho_0}}$.
\item Let $\rho_0, \rho_1>0$. Then, $\displaystyle{k_{\Ha}(\rho_0,\rho_1e^{i\beta})-k_{\Ha}(\rho_0,\rho_1)\geq \frac{1}{2}\log\frac{1}{\cos \beta}.}$
\item Let $\rho_0>0$ and $\alpha\in (-\frac{\pi}{2},\frac{\pi}{2})$. Then, $(0,+\infty)\ni \rho\mapsto k_{\Ha}(\rho e^{i\alpha},\rho_0e^{i\beta})$ has a minimum at $\rho=\rho_0$, it is  increasing for $\rho>\rho_0$ and  decreasing for $\rho<\rho_0$.
\item Let $\theta_0, \theta_1\in (-\frac{\pi}{2},\frac{\pi}{2})$ and $\rho>0$. Then $k_{\Ha}(\rho e^{i\theta_0},\rho e^{i\theta_1})=k_{\Ha}(e^{i\theta_0},e^{i\theta_1})$. Moreover, $k_\Ha(1,e^{i\theta})=k_\Ha(1,e^{-i\theta})$ for all $\theta\in [0,\pi/2)$ and $[0, \pi/2)\ni \theta\mapsto k_\Ha(1,e^{i\theta})$ is strictly increasing.
\item Let $\beta_0,\beta_1\in (-\frac{\pi}{2},\frac{\pi}{2})$ and $0<\rho_0<\rho_1$. Then $\displaystyle{
k_{\Ha}(\rho_0e^{i\beta_0},\rho_1e^{i\beta_1})\geq k_{\Ha}(\rho_0,\rho_1)}$.
\end{enumerate}
\end{lemma}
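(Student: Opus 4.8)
The plan is to reduce every item to an explicit computation using the two formulas $\kappa_\Ha(z;v)=|v|/(2\Re z)$ and $k_\Ha(z,w)=\frac12\log\frac{1+|(z-w)/(z+\bar w)|}{1-|(z-w)/(z+\bar w)|}$, together with the description of geodesics in $\Ha$ as rays parallel to $\R$ or arcs of circles orthogonal to $i\R$. For item (1), I would parametrize $\Gamma$ by $\rho\mapsto\rho e^{i\beta}$, $\rho\in[\rho_0,\rho_1]$, so that $\gamma'(\rho)=e^{i\beta}$ and $\Re\gamma(\rho)=\rho\cos\beta$; then $\ell_\Ha(\Gamma)=\int_{\rho_0}^{\rho_1}\frac{1}{2\rho\cos\beta}\,d\rho=\frac{1}{2\cos\beta}\log\frac{\rho_1}{\rho_0}$, which is immediate.

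For item (2), I would use the invariance of $k_\Ha$ under the scaling $z\mapsto z/\rho_1$ (a hyperbolic automorphism of $\Ha$) to reduce to $\rho_1=1$, then set $t=\rho_0$ and compute $g(t):=k_\Ha(t,e^{i\beta})-k_\Ha(t,1)$ directly from the distance formula; after writing $|(t-e^{i\beta})/(t+e^{-i\beta})|^2=\frac{t^2-2t\cos\beta+1}{t^2+2t\cos\beta+1}$ and the analogous expression with $\beta=0$, the claim becomes an elementary inequality in $t>0$. I expect that the infimum over $t$ is attained in the limit $t\to 0^+$ (or $t\to+\infty$), where $e^{i\beta}$ and $1$ both look like boundary points at angle $\beta$ apart, giving exactly $k_\Ha(1,e^{i\beta})$-type behavior; alternatively one can differentiate in $t$ and check monotonicity. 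Item (3) is the statement that $\rho\mapsto k_\Ha(\rho e^{i\alpha},\rho_0 e^{i\beta})$ is unimodal: here the natural approach is to apply a hyperbolic automorphism of $\Ha$ sending the geodesic ray $\{\rho e^{i\alpha}:\rho>0\}$ to a straight half-line, or better, to recall that $\{\rho e^{i\alpha}:\rho>0\}$ is itself a geodesic of $\Ha$; then the point of $\Ha$ closest to this geodesic determines a foot of perpendicular, and the distance from a point moving along a geodesic to a fixed point is a standard convex-like function with a unique minimum, which one can also verify by a one-variable calculus argument after reducing $\alpha$ to $0$ via the rotation $z\mapsto e^{-i\alpha}z$ (not an automorphism of $\Ha$ unless combined with care — so instead I would map $\Ha$ to a strip or use the circle-arc geodesic picture directly).

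For item (4), the equality $k_\Ha(\rho e^{i\theta_0},\rho e^{i\theta_1})=k_\Ha(e^{i\theta_0},e^{i\theta_1})$ follows again from scaling invariance $z\mapsto z/\rho$; the symmetry $k_\Ha(1,e^{i\theta})=k_\Ha(1,e^{-i\theta})$ follows from the anti-holomorphic automorphism $z\mapsto\bar z$ of $\Ha$ fixing $1$; and strict monotonicity of $\theta\mapsto k_\Ha(1,e^{i\theta})$ on $[0,\pi/2)$ comes from computing $|(1-e^{i\theta})/(1+e^{-i\theta})|^2=\frac{2-2\cos\theta}{2+2\cos\theta}=\tan^2(\theta/2)$, so $k_\Ha(1,e^{i\theta})=\frac12\log\frac{1+\tan(\theta/2)}{1-\tan(\theta/2)}$, visibly strictly increasing in $\theta$. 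Finally, item (5): using item (3) with $\rho_0$ replaced appropriately, the function $\rho\mapsto k_\Ha(\rho e^{i\beta_0},\rho_1 e^{i\beta_1})$ decreases down to its minimum as $\rho$ increases toward its optimal value; since the optimal value is $\geq\rho_1\cdot(\text{something})$... more cleanly, I would argue: fix $\rho_1 e^{i\beta_1}$ and move the first point along the geodesic ray $\{\rho e^{i\beta_0}\}$; by item (3) (applied with the roles set so that the ray is $\{\rho e^{i\beta_0}\}$) the distance $k_\Ha(\rho e^{i\beta_0},\rho_1 e^{i\beta_1})$ is unimodal in $\rho$ with minimum at some $\rho^\ast$, and one checks $\rho^\ast\geq\rho_1$; hence for $\rho_0<\rho_1\leq\rho^\ast$ the function is decreasing on $(\rho_0,\rho_1)$, so $k_\Ha(\rho_0 e^{i\beta_0},\rho_1 e^{i\beta_1})\geq k_\Ha(\rho_1 e^{i\beta_0},\rho_1 e^{i\beta_1})=k_\Ha(e^{i\beta_0},e^{i\beta_1})\geq k_\Ha(\rho_0,\rho_1)$ — wait, this needs the last inequality, which is itself the content. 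So instead, for (5), I would directly use (3) twice: first move in $\rho$ and then in the angle, combined with (2) to handle the angular cost; the cleanest route is probably to write $k_\Ha(\rho_0 e^{i\beta_0},\rho_1 e^{i\beta_1})\geq k_\Ha(\rho_0,\rho_1 e^{i\beta_1})$... the main obstacle is getting the comparison directions in (3) exactly right, so that the monotonicity is applied on the correct side of the minimum; I expect this bookkeeping to be the one genuinely delicate point, whereas everything else is a short explicit computation.
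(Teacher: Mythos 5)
Parts (1), (2) and (4) of your plan are fine and essentially coincide with the paper's own proof: (1) is the one-line integral $\int_{\rho_0}^{\rho_1}\frac{d\rho}{2\rho\cos\beta}$; for (2) the explicit distance formula gives $k_{\Ha}(\rho_0,\rho_1e^{i\beta})-k_{\Ha}(\rho_0,\rho_1)=\frac12\log\frac{1+x^2+\sqrt{1+x^4-2x^2\cos(2\beta)}}{2\cos\beta}$ with $x=\min\{\rho_0/\rho_1,\rho_1/\rho_0\}\in(0,1]$, which is increasing in $x$ with limit $\frac12\log\frac1{\cos\beta}$ as $x\to0^+$ (your aside that this limit is ``$k_\Ha(1,e^{i\beta})$-type'' is inaccurate but harmless); and (4) via scaling, conjugation and $\bigl|\frac{1-e^{i\theta}}{1+e^{-i\theta}}\bigr|^2=\tan^2(\theta/2)$ is exactly the paper's computation.

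The genuine gaps are in (3) and (5). For (3) your main idea rests on a false premise: $\{\rho e^{i\alpha}:\rho>0\}$ is \emph{not} a geodesic of $\Ha$ for $\alpha\neq0$ (the geodesics of $\Ha$ are half-lines parallel to the real axis and arcs of circles orthogonal to $i\R$), so the convexity of the distance to a fixed point along a geodesic does not apply; moreover (3) asserts more than unimodality, namely that the minimum is exactly at $\rho=\rho_0$, which is precisely what is used later (e.g.\ in Lemma \ref{Lem:hyper-sector-inH} and Remark \ref{Rem:disco-in-sector-qg}), and none of your hedged alternatives (strip model, circle-arc picture) actually produces it. The paper just differentiates: one reduces to the function $\rho\mapsto\frac{\rho_0^2+\rho^2-2\rho\rho_0\cos(\beta-\alpha)}{\rho_0^2+\rho^2+2\rho\rho_0\cos(\beta+\alpha)}$, whose derivative has the sign of $2\cos\alpha\cos\beta\,(\rho^2-\rho_0^2)$; if you want a less computational route you would need, in addition to unimodality, the symmetry $g(\rho)=g(\rho_0^2/\rho)$ coming from the anti-holomorphic isometry $z\mapsto\rho_0^2/\bar z$ of $\Ha$, which fixes $\rho_0e^{i\beta}$ and swaps $\rho e^{i\alpha}$ with $(\rho_0^2/\rho)e^{i\alpha}$. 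For (5) you yourself note that your first chain collapses (the final comparison $k_\Ha(e^{i\beta_0},e^{i\beta_1})\geq k_\Ha(\rho_0,\rho_1)$ is simply false), and the substitute you sketch, e.g.\ $k_\Ha(\rho_0e^{i\beta_0},\rho_1e^{i\beta_1})\geq k_\Ha(\rho_0,\rho_1e^{i\beta_1})$, is neither proved nor obviously true; so (5) is left unestablished. The paper's argument is a short direct computation: it suffices to show $\bigl|\frac{\rho_0e^{i\beta_0}-\rho_1e^{i\beta_1}}{\rho_0e^{i\beta_0}+\rho_1e^{-i\beta_1}}\bigr|\geq\frac{\rho_1-\rho_0}{\rho_0+\rho_1}$, which after squaring reduces to $\cos\beta_0\cos\beta_1+\frac{2\rho_0\rho_1}{\rho_0^2+\rho_1^2}\sin\beta_0\sin\beta_1\leq1$, immediate since $2\rho_0\rho_1\leq\rho_0^2+\rho_1^2$. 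You should replace your sketches of (3) and (5) by explicit arguments of this kind.
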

\begin{proof}
(1) Setting $\gamma(\rho):=\rho e^{i\beta}$, we have
\[
\ell_\Ha(\Gamma)=\ell_\Ha(\gamma;[\rho_0,\rho_1])=\int_{\rho_0}^{\rho_1}\frac{1}{2\Re \rho e^{i\beta}}d\rho=\frac{1}{2\cos \beta}\log\frac{\rho_1}{\rho_0}.
\]
In particular, since for $\beta=0$, $\Gamma$ is a geodesic of $\Ha$, $\ell_\Ha(\Gamma;[\rho_0, \rho_1])=k_\Ha(\rho_0, \rho_1)$.

(2) We have,
\begin{equation*}
\begin{split}
k_{\Ha}(\rho_0,\rho_1e^{i\beta})&=\frac{1}{2}\log \frac{\left(1+\left\vert \frac{\rho_1e^{i\beta}-\rho_0}{\rho_1e^{i\beta}+\rho_0}\right\vert\right)^2}{1-\left\vert \frac{\rho_1e^{i\beta}-\rho_0}{\rho_1e^{i\beta}+\rho_0}\right\vert^2}=\frac{1}{2}\log \frac{\left(\left\vert{\rho_1e^{i\beta}+\rho_0}\right\vert+\left\vert \rho_1e^{i\beta}-\rho_0\right\vert\right)^2}{\left\vert {\rho_1e^{i\beta}+\rho_0}\right\vert^2-\left\vert {\rho_1e^{i\beta}-\rho_0}\right\vert^2}\\& = \frac{1}{2}\log \frac{\rho_0^2+\rho_1^2+\sqrt{\rho_1^4+\rho_0^4-2\rho_0^2\rho_1^2\cos(2\beta)}}{2\rho_0\rho_1\cos\beta}.
\end{split}
\end{equation*}
Assume $\rho_0\leq \rho_1$ and  set $x=\frac{\rho_0}{\rho_1}$ (in case $\rho_0>\rho_1$, set $x=\frac{\rho_1}{\rho_0}$). Hence,
\[
k_{\Ha}(\rho_0,\rho_1e^{i\beta})-k_{\Ha}(\rho_0,\rho_1)= \frac{1}{2}\log \frac{1+x^2+\sqrt{1+x^4-2x^2\cos(2\beta)}}{2\cos \beta}.
\]
Since the numerator inside the logarithm is strictly increasing in $x$ and $x\in (0,1]$, the estimate follows.

(3) We have
\begin{equation*}
k_{\Ha}(\rho e^{i\alpha},\rho_0e^{i\beta})=\frac{1}{2}\log \frac{1+\left\vert \frac{\rho_0e^{i\beta}-\rho e^{i\alpha}}{\rho_0e^{i\beta}+\rho e^{-i\alpha}}\right\vert}{1-\left\vert \frac{\rho_0e^{i\beta}-\rho e^{i\alpha}}{\rho_0e^{i\beta}+\rho e^{-i\alpha}}\right\vert}.
\end{equation*}
Since the derivative  of $[0,1)\ni x\mapsto \frac{1}{2}\log \frac{1+x}{1-x}$ is strictly positive, it is enough to prove the statement for the function
\[
(0,+\infty)\ni \rho\mapsto \frac{|\rho_0e^{i\beta}-\rho e^{i\alpha}|^2}{|\rho_0e^{i\beta}+\rho e^{-i\alpha}|^2}=\frac{\rho_0^2+\rho^2-2\rho\rho_0\cos(\beta-\alpha)}{\rho_0^2+\rho^2+2\rho\rho_0\cos(\beta+\alpha)},
\]
and this follows immediately from a direct computation.

(4) From a straightforward computation from the very definition of hyperbolic distance in $\Ha$, we have
\[
k_{\Ha}(\rho e^{i\theta_0},\rho e^{i\theta_1})=\frac{1}{2}\log \frac{1+\frac{|e^{i\theta_0}-e^{i\theta_1}|}{|e^{i\theta_0}+e^{-i\theta_1}|}}{1-\frac{|e^{i\theta_0}-e^{i\theta_1}|}{|e^{i\theta_0}+e^{-i\theta_1}|}}=k_{\Ha}( e^{i\theta_0}, e^{i\theta_1}).
\]
This proves the first part of the statement. Alternatively, this follows from the fact that the multiplication by $\rho$ is a biholomorphism of $\Ha$. Next, since
\[
[0,\pi/2)\ni \theta\mapsto \left|\frac{e^{i\theta}-1}{e^{i\theta}+1}\right|=\sqrt{\frac{1-\cos \theta}{1+\cos \theta}},
\]
is strictly increasing, using the fact that $(0,1)\ni x\mapsto \frac{1}{2}\log \frac{1+x}{1-x}$ is strictly increasing in $x$ and from the very definition of $k_{\Ha}(1, e^{i\theta})$ it follows that $[0, \pi/2)\ni \theta\mapsto k_\Ha(1,e^{i\theta})$ is strictly increasing. Moreover, the previous formula also shows that $k_{\Ha}(1, e^{i\theta})=k_{\Ha}(1, e^{-i\theta})$ for all $\theta\in [0,\pi/2)$.

(5) Using the fact that $(0,1)\ni x\mapsto \frac{1}{2}\log \frac{1+x}{1-x}$ is strictly increasing in $x$ and from the very definition of $k_{\Ha}$, it is enough to prove that
\[
\frac{|e^{i\beta_0}\rho_0-e^{i\beta_1}\rho_1|}{|e^{i\beta_0}\rho_0+e^{-i\beta_1}\rho_1|}\geq \frac{\rho_1-\rho_0}{\rho_0+\rho_1}.
\]
Setting $a:=\rho_0^2+\rho_1^2$ and $b=2\rho_0\rho_1$, and taking the square in the previous inequality, this amounts to show that
\[
\frac{a-b\cos(\beta_0-\beta_1)}{a+b\cos(\beta_0+\beta_1)}\geq \frac{a-b}{a+b}.
\]
After simple computations, this is equivalent to
\[
\cos\beta_1\cos\beta_0+\frac{b}{a}\sin\beta_1\sin\beta_0\leq 1.
\]
Since $b\leq a$, the result follows.
\end{proof}

Now we describe the shape of a hyperbolic sector in the half-plane. We need a definition:

\begin{definition}
Let $\beta\in (0,\pi)$ and $r_0\in [0,+\infty)$, let
\[
V(\beta, r_0):=\{\rho e^{i\theta}: \rho>r_0,  |\theta|< \beta\},
\]
be a {\sl horizontal sector} of angle $2\beta$ symmetric with respect to the real axis and with height $r_0$.
\end{definition}

\begin{lemma}\label{Lem:hyper-sector-inH}
Let $\gamma:[0,+\infty)\to \Ha$ be a geodesic such that $\gamma([0,+\infty))=[r_0, +\infty)$ and $\gamma(0)=r_0$   for some $r_0>0$. Then for every $R>0$ there exists
$\beta\in (0,\pi/2)$, with $k_\Ha(1,e^{i\beta})=R$, such that
\begin{equation}\label{Stolz-hyperb}
S_\Ha(\gamma, R)=V(\beta, r_0)\cup D^{hyp}_\Ha(r_0, R),
\end{equation}
where $D^{hyp}_\Ha(r_0, R):=\{w\in \Ha: k_\Ha(r_0, w)<R\}$ is the hyperbolic disc in $\Ha$ of center $r_0$ and radius $R$.
\end{lemma}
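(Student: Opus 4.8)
The plan is to describe explicitly the hyperbolic sector $S_\Ha(\gamma,R)$ around the geodesic $\gamma([0,+\infty))=[r_0,+\infty)$ by computing, for each point $w\in\Ha$, the hyperbolic distance $k_\Ha(w,[r_0,+\infty))$ and checking when it is less than $R$. Write $w=\rho e^{i\theta}$ with $\rho>0$ and $\theta\in(-\pi/2,\pi/2)$. The key reduction is that, since the half-line $[r_0,+\infty)$ is a geodesic, the nearest-point projection onto it is well behaved, and by part~(3) of Lemma~\ref{Lem:hyper-semipiano} the function $(0,+\infty)\ni s\mapsto k_\Ha(\rho e^{i\theta}, s)$ (taking $\alpha=\theta$, $\beta=0$, $\rho_0=s$ there, after swapping the roles) has a unique minimum at $s=\rho$ and is monotone on either side. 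Hence
\[
k_\Ha\big(\rho e^{i\theta},[r_0,+\infty)\big)=\begin{cases} k_\Ha(\rho e^{i\theta},\rho) & \text{if }\rho\geq r_0,\\[1mm] k_\Ha(\rho e^{i\theta},r_0) & \text{if }\rho< r_0,\end{cases}
\]
because when $\rho\geq r_0$ the unconstrained minimizer $s=\rho$ lies in $[r_0,+\infty)$, while when $\rho<r_0$ the distance to the half-line is increasing in $s$ there, so the infimum is attained at the endpoint $r_0$.

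\textbf{Analysis of the two regimes.} In the first regime, $\rho\geq r_0$: by part~(4) of Lemma~\ref{Lem:hyper-semipiano}, $k_\Ha(\rho e^{i\theta},\rho)=k_\Ha(e^{i\theta},1)=k_\Ha(1,e^{i|\theta|})$, which is strictly increasing in $|\theta|$ on $[0,\pi/2)$. So, for $\rho\ge r_0$, the condition $k_\Ha(w,[r_0,+\infty))<R$ is exactly $k_\Ha(1,e^{i|\theta|})<R$, i.e. $|\theta|<\beta$ where $\beta\in(0,\pi/2)$ is the unique angle with $k_\Ha(1,e^{i\beta})=R$ (existence and uniqueness of $\beta$ follow from the strict monotonicity and the fact that $k_\Ha(1,e^{i\theta})\to+\infty$ as $\theta\to\pi/2^-$, which one checks from the explicit formula $|\tfrac{e^{i\theta}-1}{e^{i\theta}+1}|=\sqrt{(1-\cos\theta)/(1+\cos\theta)}\to 1$). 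This gives precisely $\{\rho e^{i\theta}:\rho\ge r_0,\ |\theta|<\beta\}=V(\beta,r_0)\cap\{\rho\geq r_0\}$, and since $V(\beta,r_0)$ only contains points with $\rho>r_0$, this is all of $V(\beta,r_0)$. In the second regime, $\rho< r_0$: the condition becomes $k_\Ha(\rho e^{i\theta},r_0)<R$, which is by definition the statement $w\in D^{hyp}_\Ha(r_0,R)$; so the part of the sector with $\rho<r_0$ is exactly $D^{hyp}_\Ha(r_0,R)\cap\{\rho<r_0\}$. Finally, for points with $\rho\geq r_0$ that lie in $D^{hyp}_\Ha(r_0,R)$ one checks they are already captured: if $\rho\ge r_0$ then $k_\Ha(w,[r_0,+\infty))\le k_\Ha(w,r_0)<R$, so such points lie in $V(\beta,r_0)$ as well; hence $D^{hyp}_\Ha(r_0,R)\subset S_\Ha(\gamma,R)$ entirely, and conversely $V(\beta,r_0)\subset S_\Ha(\gamma,R)$ entirely. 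Putting the two regimes together yields $S_\Ha(\gamma,R)=V(\beta,r_0)\cup D^{hyp}_\Ha(r_0,R)$.

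\textbf{Main obstacle.} The step I expect to require the most care is the justification that the projection of $w=\rho e^{i\theta}$ onto the half-line $[r_0,+\infty)$ behaves as claimed, i.e. identifying where the infimum $\inf_{s\ge r_0}k_\Ha(\rho e^{i\theta},s)$ is attained and with which value. Lemma~\ref{Lem:hyper-semipiano}(3) gives the shape of $s\mapsto k_\Ha(\rho e^{i\theta},s)$ over all of $(0,+\infty)$ (decreasing then increasing, with minimum at $s=\rho$), so restricting to $[r_0,+\infty)$ is immediate once one matches the variables correctly; the only subtlety is bookkeeping the two cases $\rho\ge r_0$ versus $\rho<r_0$ and then verifying the overlap/containment claims above so that the union — rather than a disjoint union with leftover pieces — comes out clean. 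A secondary routine point is the elementary calculus fact that $\theta\mapsto k_\Ha(1,e^{i\theta})$ is a continuous strictly increasing bijection of $[0,\pi/2)$ onto $[0,+\infty)$, which is already recorded in Lemma~\ref{Lem:hyper-semipiano}(4) together with the divergence at $\pi/2$, so $\beta$ with $k_\Ha(1,e^{i\beta})=R$ exists and is unique.
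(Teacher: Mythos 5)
Your proposal is correct and takes essentially the same route as the paper: both rest on Lemma \ref{Lem:hyper-semipiano}(3)--(4) to locate the nearest point of the half-line $[r_0,+\infty)$, define $\beta$ by $k_\Ha(1,e^{i\beta})=R$, and split into the regimes $\rho\ge r_0$ and $\rho<r_0$ to get the two inclusions. The only (harmless) imprecision is on the circle $\rho=r_0$: the points $r_0e^{i\theta}$ with $|\theta|<\beta$ are not in $V(\beta,r_0)$, which requires $\rho>r_0$, but they satisfy $k_\Ha(r_0e^{i\theta},r_0)=k_\Ha(1,e^{i|\theta|})<R$ and hence lie in $D^{hyp}_\Ha(r_0,R)$, so the claimed union is unaffected.
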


\begin{proof}
Let $w\in \Ha$, $w=\rho e^{i\theta}$ for some $\rho>0$ and $\theta\in (-\pi/2,\pi/2)$. Hence, by Lemma \ref{Lem:hyper-semipiano}(3),
\[
k_\Ha(w,(0,+\infty))=k_\Ha(\rho e^{i\theta}, \rho)=k_\Ha(e^{i\theta},1).
\]
Let $\beta\in (0,\pi/2)$ be such that $k_\Ha(1,e^{i\beta})=R$. Therefore, given $\rho>0$, by Lemma \ref{Lem:hyper-semipiano}(4) and the previous equalities,   $k_\Ha(\rho e^{i\theta}, (0,+\infty))<R$ if and only if $|\theta|<\beta$. This implies at once that $V(\beta, r_0)\subset S_\Ha(\gamma, R)$.

Moreover, let $w\in D^{hyp}_\Ha(r_0, R)$. Hence, $M:=k_\Ha(r_0, w)<R$. Let $r\in (r_0, +\infty)$ be such that $k_\Ha(r,r_0)<R-M$. Hence, by the triangle inequality,
\[
k_\Ha(w, r)\leq k_\Ha (w,r_0)+k_\Ha(r_0,r)<M+R-M=R,
\]
proving that $w\in S_\Ha(\gamma, R)$. Therefore, $V(\beta, r_0)\cup D^{hyp}_\Ha(r_0, R)\subset S_\Ha(\gamma, R)$.

Now, let $w=\rho e^{i\theta}\in S_\Ha(\gamma, R)$ with $\rho>0$ and $\theta\in (-\pi/2, \pi/2)$. If $\rho>r_0$, by Lemma \ref{Lem:hyper-semipiano}(3) and (4), it follows immediately that $w\in V(\beta, r_0)$. If $\rho\leq r_0$, the condition $w\in S_\Ha(\gamma, R)$ implies that there exists $r>r_0$ such that $k_\Ha(w,r)<R$. Hence,  by Lemma \ref{Lem:hyper-semipiano}(3),  $k_\Ha(\rho e^{i\theta}, r_0)<k_\Ha(\rho e^{i\theta}, r)<R$ and $w\in  D^{hyp}_\Ha(r_0, R)$. This proves that $S_\Ha(\gamma, R)\subset V(\beta, r_0)\cup D^{hyp}_\Ha(r_0, R)$.
\end{proof}

As a consequence, we have the following characterization of non-tangential convergence:

\begin{proposition}\label{Prop:conv-nt-sc}
Let $\Delta\subsetneq \C$ be a simply connected domain and let $f:\D \to \Delta$ be a Riemann map. Let $\{z_n\}\subset \Delta$ be a compactly divergent sequence. Then $\{f^{-1}(z_n)\}$ converges non-tangentially to $\sigma\in \partial \D$ if and only if there exist $R>0$ and a geodesic $\gamma:[0,+\infty)\to \Delta$ such that $\lim_{t\to +\infty}\gamma(t)=\hat{f}(\underline{x}_\sigma)$ in the Carath\'eodory topology of $\Delta$ and $\{z_n\}$ is eventually contained in $S_\Delta(\gamma, R)$. Here, $\hat{f}:\widehat{\D}\to \widehat{\Delta}$ is the homeomorphism induced by $f$ and $\underline{x}_\sigma\in \partial_C \D$ is the prime end corresponding to $\sigma$ under $f$.
\end{proposition}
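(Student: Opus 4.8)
The plan is to transfer the whole question to the right half-plane $\Ha$, where hyperbolic sectors around geodesic rays have been computed explicitly in Lemma~\ref{Lem:hyper-sector-inH} and where non-tangential convergence to the boundary point $\infty$ has a transparent meaning. Fix a biholomorphism $g:\Delta\to\Ha$ whose induced homeomorphism of the Carath\'eodory compactifications, $\hat g:\widehat\Delta\to\widehat\Ha$, sends the prime end $\hat f(\underline x_\sigma)$ to the prime end of $\Ha$ at $\infty$; such a $g$ is obtained by composing $f^{-1}:\Delta\to\D$ with a M\"obius transformation of $\D$ onto $\Ha$. Since $g$ is an isometry between $k_\Delta$ and $k_\Ha$, it carries geodesics of $\Delta$ to geodesics of $\Ha$, hyperbolic sectors to hyperbolic sectors, compactly divergent sequences to compactly divergent sequences, and $\gamma(t)\to\hat f(\underline x_\sigma)$ in $\widehat\Delta$ holds if and only if $g(\gamma(t))\to\infty$ in $\widehat\Ha$. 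Moreover, a standard computation with the Cayley map shows that $\{f^{-1}(z_n)\}$ converges non-tangentially to $\sigma\in\partial\D$ if and only if $|g(z_n)|\to\infty$ and $\{g(z_n)\}$ is eventually contained in a horizontal sector $V(\beta_0,r_0')$ for some $\beta_0\in(0,\pi/2)$ and $r_0'>0$. Thus it suffices to prove the corresponding statement in $\Ha$: a compactly divergent sequence $\{w_n\}\subset\Ha$ satisfies $|w_n|\to\infty$ and is eventually contained in some $V(\beta_0,r_0')$, $\beta_0<\pi/2$, if and only if there exist $R>0$ and a geodesic ray $\eta:[0,+\infty)\to\Ha$ with $\lim_{t\to+\infty}\eta(t)=\infty$ such that $\{w_n\}$ is eventually contained in $S_\Ha(\eta,R)$.

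Next I would normalize the geodesic. Since the geodesics of $\Ha$ are the horizontal segments and the arcs of circles centred on the imaginary axis, a geodesic ray tending to $\infty$ must be a horizontal ray, so $\eta([0,+\infty))=\{x+ic:\ x\ge r_0\}$ for some $c\in\R$ and $r_0>0$. The isometry $z\mapsto z-ic$ of $\Ha$ fixes $\infty$, preserves compact divergence, non-tangential convergence to $\infty$ and the class of horizontal sectors, and moves $\eta$ to $[r_0,+\infty)$; hence we may assume $\eta([0,+\infty))=[r_0,+\infty)$ with $\eta(0)=r_0$, and Lemma~\ref{Lem:hyper-sector-inH} gives
\[
S_\Ha(\eta,R)=V(\beta,r_0)\cup D^{hyp}_\Ha(r_0,R),\qquad k_\Ha(1,e^{i\beta})=R,\ \ \beta\in(0,\pi/2).
\]

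For the forward implication, suppose $\{w_n\}$ is compactly divergent and eventually contained in $S_\Ha(\eta,R)=V(\beta,r_0)\cup D^{hyp}_\Ha(r_0,R)$. The hyperbolic disc $D^{hyp}_\Ha(r_0,R)$ is relatively compact in $\Ha$, so by compact divergence it contains only finitely many $w_n$; therefore $\{w_n\}$ is eventually contained in $V(\beta,r_0)$ with $\beta<\pi/2$, and since every point of $V(\beta,r_0)$ has real part $>r_0\cos\beta>0$, compact divergence forces $|w_n|\to\infty$. Conversely, suppose $|w_n|\to\infty$ and $\{w_n\}$ is eventually contained in $V(\beta_0,r_0')$ with $\beta_0\in(0,\pi/2)$. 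Take $\eta$ to be the ray $[1,+\infty)$ (so $r_0=1$) and set $R:=k_\Ha(1,e^{i\beta_0})$, so that the $\beta$ produced by Lemma~\ref{Lem:hyper-sector-inH} equals $\beta_0$ (using Lemma~\ref{Lem:hyper-semipiano}(4)); then for all large $n$ we have $|w_n|>1$ and $\Arg w_n\in(-\beta_0,\beta_0)$, whence $w_n\in V(\beta_0,1)\subset S_\Ha(\eta,R)$. Transferring $\eta$ back to $\Delta$ by $g^{-1}$ yields a geodesic ray of $\Delta$ converging to $\hat f(\underline x_\sigma)$ in the Carath\'eodory topology (Proposition~\ref{Prop:geodesic-in-simply}), and the equivalence established above completes the proof.

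The analytic content is light once one is inside $\Ha$: it reduces to Lemma~\ref{Lem:hyper-sector-inH} together with the fact that a hyperbolic disc is relatively compact and hence meets a compactly divergent sequence only finitely often. The main obstacle is the bookkeeping in the reduction step---verifying that the Cayley map interchanges non-tangential (Stolz) convergence at $\sigma\in\partial\D$ with the property ``eventual containment in some horizontal sector $V(\beta_0,r_0')$, $\beta_0<\pi/2$, together with $|w_n|\to\infty$'', and that isometry, geodesics, prime-end limits and compact divergence all survive the transfer---and, in the normalization, checking that geodesic rays tending to $\infty$ in $\Ha$ are exactly the horizontal rays, so that Lemma~\ref{Lem:hyper-sector-inH} applies after a single translation.
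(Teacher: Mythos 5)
Your proof is correct and follows essentially the same route as the paper's: the paper likewise reduces, via the fact that $f$ is a hyperbolic isometry and hyperbolic sectors are isometry-invariant, to the case $\Delta=\Ha$ with a Cayley transform sending $\sigma$ to $\infty$, identifies non-tangential convergence to $\sigma$ with eventual containment in a horizontal sector, and concludes by Lemma \ref{Lem:hyper-sector-inH}; you simply spell out the details the paper leaves implicit (geodesic rays tending to $\infty$ are horizontal rays, the disc piece $D^{hyp}_\Ha(r_0,R)$ is ruled out by compact divergence, and matching $\beta=\beta_0$ via Lemma \ref{Lem:hyper-semipiano}(4)). The only micro-quibble is that $z\mapsto z-ic$ does not literally preserve horizontal sectors, but since $|w_n|\to\infty$ a translated sector is eventually contained in a slightly wider one (equivalently, this translation corresponds to a disc automorphism fixing $\sigma$, which preserves non-tangential convergence), so your normalization step is harmless.
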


\begin{proof}
Since the condition  that $\{z_n\}$ is eventually contained in $S_\Delta(\gamma, R)$ is invariant under isometries for the hyperbolic distance and $f$ is an isometry between $\omega$ and $k_\Delta$, it is enough to prove the statement for $\Delta=\Ha$ and a Cayley transform $f:\D\to \Ha$ which maps $\sigma $ to $\infty$. Hence, $\{z_n\}\subset \D$ converges non-tangentially to $\infty$ if and only if $\{z_n\}$ is eventually contained in a horizontal sector in $\Ha$. The result follows then at once by Lemma \ref{Lem:hyper-sector-inH}.
\end{proof}

The previous result allows to talk of non-tangential limits in simply connected domains, but, from a practical point of view, it is not very useful since  the characterization of hyperbolic sectors in a general simply connected domain is a very hard task. Still, we will see how, using localization, one can obtain useful conclusions. We start with   the following localization result for hyperbolic sectors:

\begin{lemma}\label{Lem:Stolz-qg}
Let $\Delta\subsetneq \C$ be a simply connected domain and let $\gamma:[0,+\infty)\to \Delta$ be a geodesic such that $\lim_{t\to +\infty}k_\Delta(\gamma(0), \gamma(t))=+\infty$. Let $R>0$. Then, for every $0<R_0<R$ there exists $C>1$ such that
\begin{enumerate}
\item for every $z\in S_\Delta(\gamma, R_0)$ and $v\in \C$,
\begin{equation}\label{eq:estima-sector}
\kappa_\Delta(z;v)\leq \kappa_{S_\Delta(\gamma, R)}(z;v)\leq C \kappa_\Delta(z;v),
\end{equation}
\item for every $z, w\in S_\Delta(\gamma, R_0)$,
\begin{equation}\label{eq:estima-sector2}
k_\Delta(z, w)\leq k_{S_\Delta(\gamma, R)}(z, w)\leq C k_\Delta(z,w).
\end{equation}
\end{enumerate}
\end{lemma}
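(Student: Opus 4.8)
The plan is to reduce this to the Localization Lemma (Theorem \ref{Thm:localiz}) together with the explicit description of hyperbolic sectors in the half-plane given by Lemma \ref{Lem:hyper-sector-inH}. The left-hand inequalities in \eqref{eq:estima-sector} and \eqref{eq:estima-sector2} are immediate from the monotonicity of the infinitesimal metric and the distance under inclusion $S_\Delta(\gamma,R)\subset\Delta$, so the content is in the right-hand inequalities. By the invariance of all the objects involved under biholomorphisms, it suffices to work in $\Ha$ with $\gamma$ the geodesic $[r_0,+\infty)$ and $\gamma(0)=r_0$ for some $r_0>0$; let $p:=\infty\in\partial_C\Ha$ be the prime end to which $\gamma(t)$ converges. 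By Lemma \ref{Lem:hyper-sector-inH}, for each $\rho>0$ there is $\beta(\rho)\in(0,\pi/2)$ with $k_\Ha(1,e^{i\beta(\rho)})=\rho$ such that $S_\Ha(\gamma,\rho)=V(\beta(\rho),r_0)\cup D^{hyp}_\Ha(r_0,\rho)$, and $\rho\mapsto\beta(\rho)$ is strictly increasing by Lemma \ref{Lem:hyper-semipiano}(4). In particular $S_\Ha(\gamma,R)$ is a simply connected open set whose only ``boundary point at infinity'' in the Carath\'eodory sense is $p$, and $S_\Ha(\gamma,R_0)$ together with a neighbourhood of $p$ in $\widehat{\Ha}$ is a totally-geodesic-type region that sits well inside $S_\Ha(\gamma,R)$.

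The key step is to apply the Localization Lemma with $\Delta:=\Ha$ (equivalently $\D$ after a Cayley transform), $U^\ast$ an open neighbourhood of $p$ in $\widehat{\Ha}$ whose intersection with $\Ha$ equals $S_\Ha(\gamma,R)$, and the given constant $C>1$. Theorem \ref{Thm:localiz} produces an open neighbourhood $V^\ast$ of $p$ with $k_\Ha\le k_{S_\Ha(\gamma,R)}\le C\,k_\Ha$ on $V^\ast\cap\Ha$, and similarly for the infinitesimal metrics. It then remains to check that $S_\Ha(\gamma,R_0)\subset V^\ast$ for $R_0$ close enough to $R$ — or, more precisely, to run the argument so that the neighbourhood $V^\ast$ can be taken to contain $S_\Ha(\gamma,R_0)$. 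Here I would argue directly rather than quote the lemma as a black box: the proof of Theorem \ref{Thm:localiz} works by finding a geodesic $\tilde\gamma$ of $\Ha$ whose image lies inside the region where hyperbolic discs of radius $\operatorname{arctanh}(1/C)$ stay inside $S_\Ha(\gamma,R)$, and by Lemma \ref{Lem:hyper-semipiano}(3)–(4) the set of points at hyperbolic distance $<R-R_0$ from $S_\Ha(\gamma,R_0)$ is again a sector-plus-disc region $S_\Ha(\gamma,R')$ with $R_0<R'<R$; choosing $R'$ with $\operatorname{arctanh}(1/C)$-expansion still inside $S_\Ha(\gamma,R)$ makes $S_\Ha(\gamma,R_0)$ totally geodesic in $S_\Ha(\gamma,R)$ and guarantees the desired inclusion. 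Concretely: for $z\in S_\Ha(\gamma,R_0)$ the hyperbolic ball $D^{hyp}_\Ha(z,R-R_0)\subset S_\Ha(\gamma,R)$ by the triangle inequality and the definition of the sectors, which gives $\kappa_{S_\Ha(\gamma,R)}(z;v)\le(\tanh(R-R_0))^{-1}\kappa_\Ha(z;v)$; since $R_0$ is an arbitrary number strictly less than $R$ this already yields \eqref{eq:estima-sector} with $C:=(\tanh(R-R_0))^{-1}>1$.

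For the distance estimate \eqref{eq:estima-sector2} I would then integrate \eqref{eq:estima-sector} along geodesics, exactly as in the proof of Theorem \ref{Thm:localiz}: by Lemma \ref{Lem:total-geo-disc} applied to a geodesic of $\Ha$ separating $S_\Ha(\gamma,R_0)$ from $\partial S_\Ha(\gamma,R)\setminus\{p\}$, the region $S_\Ha(\gamma,R_0)$ (enlarged slightly) is totally geodesic in $\Ha$, so for $z,w\in S_\Ha(\gamma,R_0)$ the $\Ha$-geodesic $\eta$ joining them stays inside $S_\Ha(\gamma,R)$; then
\[
k_{S_\Ha(\gamma,R)}(z,w)\le \ell_{S_\Ha(\gamma,R)}(\eta)=\int\kappa_{S_\Ha(\gamma,R)}(\eta;\eta')\le C\int\kappa_\Ha(\eta;\eta')=C\,k_\Ha(z,w).
\]
The main obstacle, and the only genuinely non-formal point, is verifying that $S_\Delta(\gamma,R_0)$ is contained in the good neighbourhood — i.e.\ that the totally-geodesic separating curve can be chosen so that one whole side contains $S_\Ha(\gamma,R_0)$ while the other side stays at hyperbolic distance $\ge\operatorname{arctanh}(1/C)$ from the boundary of $S_\Ha(\gamma,R)$; this is where the explicit sector-plus-disc shape from Lemma \ref{Lem:hyper-sector-inH} and the monotonicity statements of Lemma \ref{Lem:hyper-semipiano} are essential, since they show the relevant ``collar'' between $S_\Ha(\gamma,R_0)$ and $S_\Ha(\gamma,R)$ has hyperbolic width exactly $R-R_0$, uniformly along $\gamma$. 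Everything else is a transcription of the Localization Lemma's proof to this particular $U^\ast$.
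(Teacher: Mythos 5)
Your treatment of the infinitesimal estimate \eqref{eq:estima-sector} is correct and in fact cleaner than the paper's: if $z\in S_\Delta(\gamma,R_0)$ then the triangle inequality gives $D^{hyp}_\Delta(z,R-R_0)\subset S_\Delta(\gamma,R)$, whence $\kappa_{S_\Delta(\gamma,R)}(z;v)\leq \kappa_{D^{hyp}_\Delta(z,R-R_0)}(z;v)=(\tanh(R-R_0))^{-1}\kappa_\Delta(z;v)$; this works directly in $\Delta$, with an explicit constant, whereas the paper reduces to $\Ha$ and estimates Euclidean distances to the boundary of the sector $V(\beta,1)$ via Theorem \ref{Thm:Distance-Lemma-inf}. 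You are also right to abandon the idea of quoting Theorem \ref{Thm:localiz} as a black box, since that lemma only produces some small neighbourhood $V^\ast$ of the prime end and gives no control ensuring $S_\Delta(\gamma,R_0)\subset V^\ast$.

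The genuine gap is in \eqref{eq:estima-sector2}, at the point where you need the $\Ha$-geodesic joining two points of $S_\Ha(\gamma,R_0)$ to stay inside the region where the infinitesimal estimate holds. You justify this by ``Lemma \ref{Lem:total-geo-disc} applied to a geodesic of $\Ha$ separating $S_\Ha(\gamma,R_0)$ from $\partial S_\Ha(\gamma,R)\setminus\{p\}$'', but no such geodesic exists: geodesics of $\Ha$ are horizontal half-lines or arcs of circles orthogonal to $i\R$, while $S_\Ha(\gamma,R_0)=V(\beta',1)\cup D^{hyp}_\Ha(1,R_0)$ is an unbounded region hugging the positive real axis whose complement inside $S_\Ha(\gamma,R)$ lies both above and below it and is unbounded in both imaginary directions; consequently $S_\Ha(\gamma,R_0)$ cannot be placed on one side of a single geodesic with the boundary rays of $S_\Ha(\gamma,R)$ on the other. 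What is actually needed (and what the paper proves) is that $S_\Ha(\gamma,R_0)$ itself is totally geodesic in $\Ha$, and this is the nontrivial part of the lemma: the paper checks separately that $V(\beta',1)$ (as the intersection of the totally geodesic sets $V(\beta',0)$ and $\{|w|>1\}\cap\Ha$, the latter indeed via Lemma \ref{Lem:total-geo-disc} applied to the circle geodesic $\{|w|=1\}\cap\Ha$) and $D^{hyp}_\Ha(1,R_0)$ are totally geodesic, and then handles the mixed case of one endpoint in the disc and one in the sector by showing $D^{hyp}_\Ha(1,R_0)\subset V(\beta',0)$ and running a connectedness argument along the arc $\{e^{i\theta}:|\theta|<\beta'\}$. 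Your proposal contains no substitute for this case analysis (alternatively one could invoke the general hyperbolic convexity of metric neighbourhoods of a geodesic ray, but that fact is neither in the paper nor argued by you), so as written the proof of \eqref{eq:estima-sector2} is incomplete. Once that convexity claim is established, your concluding integration of \eqref{eq:estima-sector} along the $\Ha$-geodesic is exactly the paper's argument.
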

\begin{proof} The left hand side inequalities follow at once  since $S_\Delta(\gamma, R)\subset \Delta$.

As for the right hand side inequalities in \eqref{eq:estima-sector} and \eqref{eq:estima-sector2}, since univalent maps are isometries for the hyperbolic distance, we can assume that $\Delta=\Ha$ and $\gamma([0,+\infty))=[1,+\infty)$.

By Lemma \ref{Lem:hyper-sector-inH},
\[
S:=S_\Ha(\gamma, R)=V(\beta, 1)\cup D^{hyp}_\Ha(1, R),
\]
 for some $\beta\in (0,\pi/2)$, and $S_\Ha(\gamma, R_0)=V(\beta', 1)\cup D^{hyp}_\Ha(1, R_0)$ for some $\beta'\in (0,\beta)$.

Taking into account that $\overline{D^{hyp}_\Ha(1, R_0)}\subset D^{hyp}_\Ha(1, R)$, it follows at once that $S_\Ha(\gamma, R_0)\cap\{w\in S: |z|\leq M\}$ is relatively compact in $S$ for every $M>1$. Therefore, given $M>1$ there exists $C$ (which depends on $M$) such that \eqref{eq:estima-sector} holds for every $z\in S_\Ha(\gamma, R_0)\cap\{w\in S: |z|\leq M\}$ and every $v\in \C$.

Fix  $M>1$  such that $\delta_S(z)=\delta_{V(\beta, 1)}(z)$ for all $z\in V(\beta',M)$.  By the previous argument, we only need to prove that \eqref{eq:estima-sector} holds for $z\in V(\beta', M)$. Let $z\in V(\beta', M)$ and $v\in \C\setminus\{0\}$. By Theorem \ref{Thm:Distance-Lemma-inf},
\begin{equation}\label{Eq:estima-k-hyp-sect1}
\frac{\kappa_S(z;v)}{\kappa_\Ha(z;v)}\leq 4\frac{\delta_\Ha(z)}{\delta_S(z)}=4\frac{\delta_\Ha(z)}{\delta_{V(\beta, 1)}(z)}.
\end{equation}
Now, let $z\in V(\beta', M)$ and let $q_z\in \partial V(\beta,1)$ be such that $|z-q_z|=\delta_{V(\beta, 1)}(z)$. If we write $z=\rho e^{i\theta}$ with $\rho>M$ and $|\theta|<\beta'$, assuming $\theta\geq 0$ (the case $\theta<0$ is similar), a simple computation shows that
\[
q_z-z=\rho\cos \beta \cos \theta (\tan\theta-\tan\beta)(\sin\beta-i\cos\beta).
\]
Hence,
\[
\delta_{V(\beta, 1)}(z)=|q_z-z|=\rho \cos \beta \cos \theta (\tan\beta-\tan\theta)\geq \rho \cos \beta \cos \theta (\tan\beta-\tan\beta').
\]
Since $\delta_\Ha(z)=\Re z=\rho\cos\theta$, we have
\[
\frac{\delta_\Ha(z)}{\delta_{V(\beta, 1)}(z)}\leq \frac{1}{\cos\beta(\tan\beta-\tan\beta')},
\]
and  the right hand side inequality in \eqref{eq:estima-sector} follows at once from \eqref{Eq:estima-k-hyp-sect1}.

We are left to prove the right hand side inequality in \eqref{eq:estima-sector2}.  To this aim, we claim that $S_\Ha(\gamma, R_0)$ is totally geodesic in $\Ha$.

Assuming the claim for the moment, let $z,w \in S_\Ha(\gamma, R_0)$ and let $\eta:[0,1]\to \Ha$ be a geodesic such that $\eta(0)=z$ and $\eta(1)=w$. By the claim, $\eta([0,1])\subset S_\Ha(\gamma, R_0)$. Hence, by \eqref{eq:estima-sector},
\[
k_{S_\Ha(\gamma, R)}(z,w)\leq \int_0^1\kappa_{S_\Ha(\gamma, R)}(\eta(t);\eta'(t))dt \leq C \int_0^1\kappa_{\Ha}(\eta(t);\eta'(t))dt=C k_\Ha(z,w),
\]
and the right hand side inequality in \eqref{eq:estima-sector2} follows.

Let us  prove the claim.  Since geodesics of $\Ha$ are either contained in half lines parallel to the real axis or in arcs of circles which intersect orthogonally the imaginary axis, it is clear that $V(\beta',0)$ is totally geodesic in $\Ha$. Next, since $\{\zeta\in \C: |\zeta|=r\}\cap \Ha$ is a geodesic in $\Ha$ for all $r>0$, it follows by Lemma \ref{Lem:total-geo-disc} that $\{w\in \Ha: |w|>1\}$ is totally geodesic in $\Ha$, hence, $V(\beta',1)=V(\beta',0)\cap \{w\in \Ha: |w|>1\}$ is totally geodesic in $\Ha$.

Moreover, $D^{hyp}_\Ha(1, R_0)$ is totally geodesic in $\Ha$ --- this can be easily seen by proving that any hyperbolic disc in $\D$ centered at $0$ is totally geodesic and using a Cayley transform to move to $\Ha$.

Therefore, we only have to show that if $z\in D^{hyp}_\Ha(1, R_0)\setminus V(\beta',1)$ and $w\in V(\beta',1)\setminus D^{hyp}_\Ha(1, R_0)$, the geodesic $\eta:[0,1]\to \Ha$ for $\Ha$ such that $\eta(0)=z$ and $\eta(1)=w$ is contained in $V(\beta',1)\cup D^{hyp}_\Ha(1, R_0)$.

To  this aim, we first observe that
$D^{hyp}_\Ha(1, R_0)\subset V(\beta',0)$. Indeed, if $\rho e^{i\theta}\in D^{hyp}_\Ha(1, R_0)$ for some $\rho>0$ and $\theta\in (-\pi/2, \pi/2)$, then by Lemma \ref{Lem:hyper-semipiano}(3),
\[
k_\Ha(\rho, \rho e^{i\theta})\leq k_\Ha(1, \rho e^{i\theta})<R_0.
\]
This, together with  Lemma \ref{Lem:hyper-semipiano}(4) and Lemma \ref{Lem:hyper-sector-inH}, proves that
\[
k_\Ha(1,  e^{i\theta})=k_\Ha(\rho,\rho e^{i\theta})<R_0=k_\Ha(1, e^{i\beta'}),
\]
and hence $|\theta|<\beta'$. That is,  $\rho e^{i\theta}\in V(\beta',0)$. Therefore, since $V(\beta',0)$ is totally geodesic in $\Ha$,
\begin{equation}\label{Eq:sta-inV-eq1}
\eta([0,1])\subset V(\beta',0).
\end{equation}
Hence, if $\eta([0,1])\not\subset V(\beta',1)\cup D^{hyp}_\Ha(1, R_0)$, then, by \eqref{Eq:sta-inV-eq1},  there exists $s\in (0,1)$ such that $|\eta(s)|<1$ and $\eta(s)\not\in D^{hyp}_\Ha(1, R_0)$. Now, the arc $(-\beta', \beta')\ni \theta\mapsto e^{i\theta}$ is  contained in $D^{hyp}_\Ha(1, R_0)$ by Lemma \ref{Lem:hyper-semipiano}(4), and divides $V(\beta',0)$ into two connected components, which are $V(\beta',1)$ and $V(\beta',0)\setminus \overline{V(\beta',1)}$.  Since $\eta([0,1])$ is connected, there exists $s'\in (s,1)$ such that $|\eta(s')|=1$---hence, $\eta(s')\in D^{hyp}_\Ha(1, R_0)$. But then, $\eta|_{[0,s']}$ is a geodesic in $\Ha$ which joins $z, \eta(s')\in D^{hyp}_\Ha(1, R_0)$ but it is not contained in $D^{hyp}_\Ha(1, R_0)$, contradicting the fact that $D^{hyp}_\Ha(1, R_0)$ is totally geodesic in $\Ha$. Therefore, $\eta([0,1])\subset V(\beta',1)\cup D^{hyp}_\Ha(1, R_0)$ and the claim follows.
\end{proof}

\begin{remark}\label{Rem:disco-in-sector-qg}
The last part of the proof of the previous lemma shows in particular that $D^{hyp}_\Ha(1, R_0)\subset V(\beta',0)$, where $k_\Ha(1, e^{i\beta'})=R_0$. Note that, by Lemma  \ref{Lem:hyper-semipiano}(4),
\[
V(\beta',0)=\{z\in \Ha: k_\Ha(z, (0,+\infty))<R_0\}.
\]
Making use of Riemann mappings, we conclude that if $\Delta\subsetneq \C$ is a simply connected domain and $\gamma:(0,+\infty)\to \Delta$ is a geodesic such that $\lim_{t\to 0^+}k_\Delta(\gamma(t), \gamma(1))=\lim_{t\to +\infty}k_\Delta(\gamma(t), \gamma(1))=+\infty$, then for every $t\in (0,+\infty)$,
\[
D^{hyp}_\Delta(\gamma(t), R_0)\subset \{z\in \Delta: k_\Ha(z, \gamma((0,+\infty)))<R_0\}.
\]
\end{remark}

We next present  two consequences of Lemma \ref{Lem:Stolz-qg}:

\begin{proposition}\label{Prop:suff-qc-cont}
Let $\Delta, U\subsetneq \C$ be two simply connected domains. Let $R>0$ and let  $\gamma:[0,+\infty)\to U$ be a geodesic in $U$ such that $\lim_{t\to+\infty}k_U(\gamma(t), \gamma(0))=+\infty$. Suppose
\[
S_U(\gamma, R)\subset\Delta\subseteq U.
\]
Then there exists $C>1$ such that for every $0\leq T<+\infty$ the curve $[0,T]\ni t\mapsto \gamma(t)$ is a $(C,0)$-quasi-geodesic in $\Delta$.

In particular, if $f:\D \to \Delta$ is a Riemann map, then $f^{-1}(\gamma(t))$ converges non-tangentially to a point $\sigma\in \partial \D$.
\end{proposition}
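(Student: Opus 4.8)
The plan is to deduce the quasi-geodesic property from the two-sided estimate for the hyperbolic distance inside a hyperbolic sector provided by Lemma~\ref{Lem:Stolz-qg}, together with the fact that $\gamma$ stays well inside $S_U(\gamma, R)$. First I would fix $R_0\in (0,R)$, say $R_0=R/2$. By definition $\gamma([0,+\infty))\subset S_U(\gamma, R_0)$, since $k_U(\gamma(t),\gamma([0,+\infty)))=0<R_0$ for every $t$. Applying Lemma~\ref{Lem:Stolz-qg}(2) to the domain $U$, the geodesic $\gamma$, and the pair $R_0<R$, we obtain a constant $C_1>1$ such that for all $s,t\in [0,+\infty)$,
\[
k_U(\gamma(s),\gamma(t))\leq k_{S_U(\gamma,R)}(\gamma(s),\gamma(t))\leq C_1\, k_U(\gamma(s),\gamma(t)).
\]

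Next I would sandwich $\Delta$ between $S_U(\gamma,R)$ and $U$ to transfer these estimates to $\Delta$. Since $S_U(\gamma,R)\subset \Delta\subseteq U$, the monotonicity of the hyperbolic distance under inclusions gives, for all $s\le t$,
\[
k_\Delta(\gamma(s),\gamma(t))\ \le\ k_{S_U(\gamma,R)}(\gamma(s),\gamma(t))\ \le\ C_1\, k_U(\gamma(s),\gamma(t))\ \le\ C_1\, k_\Delta(\gamma(s),\gamma(t)),
\]
where in the last step I again used $\Delta\subseteq U$. On the other hand, for the hyperbolic \emph{length} of the arc, since $\gamma$ is a geodesic of $U$ we have $\ell_U(\gamma;[s,t])=k_U(\gamma(s),\gamma(t))$ for every $s\le t$; and because $\gamma$ takes values in $S_U(\gamma,R)\subset\Delta$, the decreasing property of the infinitesimal metric yields $\kappa_\Delta(\gamma(u);\gamma'(u))\le C_1\,\kappa_U(\gamma(u);\gamma'(u))$ — here one can either invoke the infinitesimal estimate \eqref{eq:estima-sector} of Lemma~\ref{Lem:Stolz-qg}(1) together with $\Delta\subseteq U$, or simply integrate the distance estimate. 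Hence
\[
\ell_\Delta(\gamma;[s,t])\ \le\ C_1\,\ell_U(\gamma;[s,t])\ =\ C_1\, k_U(\gamma(s),\gamma(t))\ \le\ C_1\, k_\Delta(\gamma(s),\gamma(t)).
\]
Thus, with $C:=C_1$, for every $T>0$ the restriction $[0,T]\ni t\mapsto \gamma(t)$ satisfies $\ell_\Delta(\gamma;[s,t])\le C\,k_\Delta(\gamma(s),\gamma(t))$ for all $0\le s\le t\le T$, which is exactly the $(C,0)$-quasi-geodesic condition; note $C$ does not depend on $T$.

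For the final assertion, I would invoke Corollary~\ref{Cor:shadow}. One first checks the hypothesis $\lim_{t\to+\infty}k_\Delta(\gamma(0),\gamma(t))=+\infty$: this follows from the chain of inequalities above, since $k_\Delta(\gamma(0),\gamma(t))\ge \frac{1}{C}k_U(\gamma(0),\gamma(t))\to+\infty$ by assumption. Corollary~\ref{Cor:shadow} then produces a prime end $\underline{x}\in\partial_C\Delta$ with $\gamma(t)\to\underline{x}$ and an $\epsilon>0$ such that the geodesic $\eta$ of $\Delta$ with $\eta(0)=\gamma(0)$, $\eta(t)\to\underline{x}$ satisfies $k_\Delta(\gamma(t),\eta([0,+\infty)))<\epsilon$ and $k_\Delta(\eta(t),\gamma([0,+\infty)))<\epsilon$ for all $t$. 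Pulling back by the Riemann map $f$ (an isometry between $k_\Delta$ and $\omega$) and using Proposition~\ref{Prop:conv-nt-sc}, the curve $f^{-1}(\gamma(t))$ lies eventually in a hyperbolic sector $S_\D(f^{-1}\circ\eta,\epsilon)$ around the geodesic $f^{-1}\circ\eta$, which converges to the point $\sigma:=\hat f^{-1}(\underline{x})\in\partial\D$; hence $f^{-1}(\gamma(t))\to\sigma$ non-tangentially.

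I do not expect a serious obstacle here: the statement is essentially a packaging of Lemma~\ref{Lem:Stolz-qg} and Corollary~\ref{Cor:shadow}. The only point requiring a little care is ensuring the quasi-geodesic constant $C$ is genuinely independent of $T$ — which is guaranteed because Lemma~\ref{Lem:Stolz-qg} gives a single $C$ valid on the whole sector — and keeping straight that the comparison goes through $S_U(\gamma,R)$ (where the two-sided bound lives) while $\Delta$ only enters through the two monotone inclusions $S_U(\gamma,R)\subset\Delta\subseteq U$.
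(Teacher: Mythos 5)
Your proof is correct and follows essentially the same route as the paper: the infinitesimal estimate \eqref{eq:estima-sector} of Lemma~\ref{Lem:Stolz-qg} on $S_U(\gamma,R)$ at points of $S_U(\gamma,R_0)$, the geodesic property of $\gamma$ in $U$, and the inclusions $S_U(\gamma,R)\subset\Delta\subseteq U$ give the $(C,0)$-quasi-geodesic bound with $C$ independent of $T$, after which Corollary~\ref{Cor:shadow} and Proposition~\ref{Prop:conv-nt-sc} yield the non-tangential convergence exactly as in the paper. The only slip is the parenthetical ``or simply integrate the distance estimate'', which is not literally a substitute for the infinitesimal bound needed to control $\ell_\Delta(\gamma;[s,t])$, but your main argument does not rely on it.
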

\begin{proof}
Fix $R_0\in (0, R)$. Note that $\gamma(t)\in S_U(\gamma, R_0)$ for all $t\in [0,+\infty)$. Hence, by  Lemma \ref{Lem:Stolz-qg}, and taking into account that $\gamma$ is a geodesic in $U$, for every $0\leq s\leq t<+\infty$, we have
\begin{equation*}
\begin{split}
\ell_\Delta(\gamma;[s,t])&=\int_s^t\kappa_{\Delta}(\gamma(u);\gamma'(u))du\leq \int_s^t\kappa_{S_U(\gamma, R)}(\gamma(u);\gamma'(u))du \\&\leq C\int_s^t\kappa_{U}(\gamma(u);\gamma'(u))du=C\ell_U(\gamma;[s,t])\\&=Ck_U(\gamma(s),\gamma(t))\leq C k_\Delta(\gamma(s), \gamma(t)),
\end{split}
\end{equation*}
which shows that $\gamma:[0, T]\to \Delta$ is a $(C,0)$-quasi-geodesic in $\Delta$ for all $T>0$.

Since $\Delta\subset U$, we have
\[
\lim_{t\to+\infty}k_\Delta(\gamma(0), \gamma(t))\geq \lim_{t\to+\infty}k_U(\gamma(0), \gamma(t))=+\infty.
\]
Hence, by Corollary \ref{Cor:shadow}, there exist a geodesic $\eta:[0,+\infty)\to \Delta$ for $\Delta$ and $\delta>0$ such that $\eta(0)=\gamma(0)$, $\lim_{t\to +\infty}k_\Delta(\eta(0), \eta(t))=+\infty$ and $k_\Delta(\gamma(s), \eta([0,+\infty)))<\delta$ for every $s\in [0,+\infty)$. Proposition \ref{Prop:conv-nt-sc} implies then the final statement.
\end{proof}

\begin{theorem}\label{Thm:nec-suff-non-tg}
Let $\Delta\subsetneq \C$ be a simply connected domain and let $f:\D\to \Delta$ be a Riemann map. Let $\{z_n\}\subset \Delta$ be a compactly divergent sequence. Then $\{f^{-1}(z_n)\}$ converges non-tangentially to a point $\sigma\in \partial \D$   if and only if there exist a simply connected domain $U\subsetneq \C$, a geodesic  $\gamma:[0,+\infty)\to U$ of $U$ such that $\lim_{t\to+\infty}k_U(\gamma(t), \gamma(0))=+\infty$ and $R>R_0>0$ such that
\begin{enumerate}
\item $S_U(\gamma, R)\subset\Delta\subseteq U$,
\item  there exists $n_0\geq 0$ such that $z_n\in S_U(\gamma, R_0)$   for all $n\geq n_0$.
\end{enumerate}
\end{theorem}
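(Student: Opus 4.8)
The plan is to prove the two implications separately, with the "if" direction being essentially Proposition~\ref{Prop:suff-qc-cont} combined with the localization of hyperbolic sectors, and the "only if" direction requiring an actual construction of the domain $U$ and the geodesic $\gamma$.

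For the "if" direction, assume we are given $U$, $\gamma$, $R>R_0>0$ with $S_U(\gamma,R)\subset\Delta\subseteq U$ and $z_n\in S_U(\gamma,R_0)$ for $n\geq n_0$. By Proposition~\ref{Prop:suff-qc-cont}, each restriction $[0,T]\ni t\mapsto\gamma(t)$ is a $(C,0)$-quasi-geodesic in $\Delta$ for a uniform $C>1$, and $\lim_{t\to+\infty}k_\Delta(\gamma(0),\gamma(t))=+\infty$. By Corollary~\ref{Cor:shadow}, there is a geodesic $\eta:[0,+\infty)\to\Delta$ of $\Delta$ with $\eta(0)=\gamma(0)$, converging to some prime end $\underline{x}\in\partial_C\Delta$, and some $\epsilon>0$ with $k_\Delta(\gamma(s),\eta([0,+\infty)))<\epsilon$ and $k_\Delta(\eta(s),\gamma([0,+\infty)))<\epsilon$ for all $s$. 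Fix $R_1$ with $R_0<R_1<R$. The key point is to show $\{z_n\}$ is eventually inside a hyperbolic sector $S_\Delta(\eta,R')$ for some $R'$: for $n\geq n_0$ we have $z_n\in S_U(\gamma,R_0)$, so $k_U(z_n,\gamma([0,+\infty)))<R_0$; since $S_U(\gamma,R_1)$ is a neighborhood of $\gamma([0,+\infty))$ of amplitude $R_1>R_0$ in $U$ and $S_U(\gamma,R_1)\subset\Delta$, the $U$-geodesic segment realizing this distance stays in $\Delta$, whence $k_\Delta(z_n,\gamma([0,+\infty)))\leq k_U(z_n,\gamma([0,+\infty)))\cdot$(a factor controlled by Lemma~\ref{Lem:Stolz-qg}) — more carefully, one first notes $k_\Delta\leq k_U$ is false in general, so instead argue that $z_n\in S_U(\gamma,R_0)\subset S_U(\gamma,R_1)$, and inside $S_U(\gamma,R_1)$ the distances $k_\Delta$ and $k_{S_U(\gamma,R)}$ and $k_U$ are all comparable up to the constant of Lemma~\ref{Lem:Stolz-qg}; this bounds $k_\Delta(z_n,\gamma([0,+\infty)))$ by a constant $R''$. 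Then by the triangle inequality and the shadowing bound, $k_\Delta(z_n,\eta([0,+\infty)))<R''+\epsilon=:R'$, so $\{z_n\}$ is eventually in $S_\Delta(\eta,R')$. Since $\eta$ converges to $\underline x$, Proposition~\ref{Prop:conv-nt-sc} gives that $\{f^{-1}(z_n)\}$ converges non-tangentially to the point $\sigma\in\partial\D$ with $\hat f(\underline x_\sigma)=\underline x$.

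For the "only if" direction, assume $\{f^{-1}(z_n)\}$ converges non-tangentially to $\sigma\in\partial\D$. By Proposition~\ref{Prop:conv-nt-sc}, there is $R_0>0$ and a geodesic $\gamma:[0,+\infty)\to\Delta$ of $\Delta$ with $\lim_{t\to+\infty}k_\Delta(\gamma(t),\gamma(0))=+\infty$, converging in the Carath\'eodory topology to $\hat f(\underline x_\sigma)$, such that $\{z_n\}$ is eventually contained in $S_\Delta(\gamma,R_0)$. The natural choice is $U:=\Delta$ itself, $\gamma$ as above, and $R:=R_0+1$ (or any $R>R_0$); then condition (1) reads $S_\Delta(\gamma,R)\subset\Delta\subseteq\Delta$, which holds trivially, and condition (2) is exactly the conclusion of Proposition~\ref{Prop:conv-nt-sc}. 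I expect the statement to intend $U$ possibly larger than $\Delta$ for the sake of the applications, but taking $U=\Delta$ already satisfies the stated hypotheses, so the "only if" direction is immediate from Proposition~\ref{Prop:conv-nt-sc}.

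The main obstacle is the comparison of distances in the "if" direction: one must convert the information "$z_n$ is hyperbolically close to $\gamma$ in $U$" into "$z_n$ is hyperbolically close to the $\Delta$-geodesic $\eta$ in $\Delta$", and since in general $\Delta\subset U$ only gives $k_\Delta\geq k_U$, the wrong inequality, one has to route the argument through the totally-geodesic sector $S_U(\gamma,R_0)$, use that $S_U(\gamma,R)\subset\Delta$, and invoke Lemma~\ref{Lem:Stolz-qg} to get the two-sided comparison of $k_\Delta$, $k_{S_U(\gamma,R)}$ and $k_U$ on the smaller sector $S_U(\gamma,R_0)$. Once that comparison is in place, everything else is the triangle inequality together with Corollary~\ref{Cor:shadow} and Proposition~\ref{Prop:conv-nt-sc}.
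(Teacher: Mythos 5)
Your proposal is correct and follows essentially the same route as the paper: the ``only if'' direction is dispatched by taking $U=\Delta$ and citing Proposition \ref{Prop:conv-nt-sc}, and the ``if'' direction uses Proposition \ref{Prop:suff-qc-cont} together with the shadowing geodesic $\eta$ of Corollary \ref{Cor:shadow}, the comparison $k_\Delta\leq k_{S_U(\gamma,R)}\leq C\,k_U$ on the smaller sector from Lemma \ref{Lem:Stolz-qg}, the triangle inequality, and again Proposition \ref{Prop:conv-nt-sc}. Your handling of the would-be pitfall (the inclusion $\Delta\subseteq U$ gives $k_U\leq k_\Delta$, the wrong direction) via the chain through $S_U(\gamma,R)\subset\Delta$ is exactly the paper's argument, stated with the explicit constant $CR_0+\delta$ there.
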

\begin{proof}
 If $\{f^{-1}(z_n)\}$ converges non-tangentially to a point in $\partial \D$, then the result follows trivially by taking $U=\Delta$ and appealing to Proposition \ref{Prop:conv-nt-sc}.

Conversely, since $S_U(\gamma, R)\subset \Delta\subset U$, by Proposition \ref{Prop:suff-qc-cont}, there exists $C>1$ such that the curve $[0,T]\ni r\mapsto \gamma(r)$ is a $(C,0)$-quasi-geodesic in $\Delta$ for all $T>0$ and, arguing as in the last part of the proof of Proposition \ref{Prop:suff-qc-cont}, we find a geodesic $\eta:[0,+\infty)\to \Delta$ for $\Delta$ and $\delta>0$ such that $\eta(0)=\gamma(0)$, $\lim_{t\to +\infty}k_\Delta(\eta(0), \eta(t))=+\infty$ and $k_\Delta(\gamma(s), \eta([0,+\infty)))<\delta$ for every $s\in [0,+\infty)$.

Fix $n\geq n_0$. By hypothesis, there exists $s_n\in [0,+\infty)$ such that $k_U(z_n, \gamma(s_n))<R_0$. Hence,  by Lemma \ref{Lem:Stolz-qg}
\[
k_\Delta(z_n, \gamma(s_n))\leq k_{S_U(\gamma, R)}(z_n, \gamma(s_n))\leq C k_U(z_n, \gamma(s_n))<CR_0.
\]
Let $u_n\in [0,+\infty)$ be such that $k_\Delta(\gamma(s_n), \eta(u_n))<\delta$. Then,
\[
k_\Delta(z_n, \eta(u_n))\leq k_\Delta(z_n, \gamma(s_n))+k_\Delta(\gamma(s_n), \eta(u_n))<CR_0+\delta.
\]
By the arbitrariness of $n$, this proves that for $n\geq n_0$, $z_n\in S_\Delta(\eta, CR_0+\delta)$. Proposition \ref{Prop:conv-nt-sc} implies then the statement.
\end{proof}

The previous results have practical applications. For instance, if $\Delta\subset \C$ is a simply connected domain such that $V(\beta, 0)\subset \Delta\subset \Ha$,  for some $\beta\in (-\pi/2, \pi/2)$, then the curve $(0,+\infty)\ni t\mapsto t$ is a quasi-geodesic in $\Delta$ and its pre-image via a Riemann map converges non-tangentially to the boundary.

When dealing with the slope problem for semigroups, it is useful to consider other simple domains, and we conclude this section with a corollary which will be useful later on.

\begin{definition}
The {\sl Koebe domain with base point $p\in \C$} is
\[
\mathcal K_p:=\C\setminus\{\zeta\in \C: \Re \zeta=\Re p, \Im\zeta\leq \Im p\}.
\]
\end{definition}

Since $\mathcal K_p$ is symmetric with respect to the line $\{\zeta\in \C: \Re \zeta=\Re p\}$, it follows that  the curve $\gamma_p:(0,+\infty)\ni t\mapsto p+it$ is a geodesic in $\mathcal K_p$. A simple direct computation, using the Riemann map $f:\zeta\mapsto \sqrt{-i\zeta}$  from $\mathcal K_0$ to $\Ha$, shows

\begin{lemma}\label{Lem:sector-koebe}
Let $p\in \C$ and let $R>0$. Fix $t_0>0$ and let $\gamma_p:[t_0,+\infty)\to \mathcal K_p$ be given by $\gamma_p(t)=p+it$, $t\geq t_0$. Then there exists $\beta\in (0,\pi)$ such that
\[
S_{\mathcal K_p}(\gamma_p, R)=\left((p+iV(\beta, 0))\setminus \{\zeta\in \C: |\zeta-p|\leq t_0\}\right)\cup D^{hyp}_{\mathcal K_p}(it_0+p, R).
\]
\end{lemma}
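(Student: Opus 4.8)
The plan is to transport the problem from $\mathcal K_p$ to the right half-plane $\Ha$ via an explicit Riemann map and then invoke the description of hyperbolic sectors in $\Ha$ already obtained in Lemma \ref{Lem:hyper-sector-inH}. First I would reduce to the case $p=0$: the translation $\zeta\mapsto \zeta-p$ is a biholomorphism of $\mathcal K_p$ onto $\mathcal K_0$ carrying $\gamma_p$ to $t\mapsto it$, and since biholomorphisms are isometries for the hyperbolic distance it suffices to prove the statement for $\mathcal K_0$, with the final formula then obtained by translating back by $p$. Next I would use the Riemann map $f:\mathcal K_0\to \Ha$, $f(\zeta)=\sqrt{-i\zeta}$ (with the branch chosen so that $f(\mathcal K_0)=\Ha$); one checks directly that $f$ maps the slit plane $\mathcal K_0$ onto $\Ha$ and that it sends the vertical ray $\{it:t>0\}$ onto the positive real axis $(0,+\infty)$, because $-i(it)=t>0$ so $f(it)=\sqrt t\in(0,+\infty)$. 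In particular $f(\gamma_0([t_0,+\infty)))=[\sqrt{t_0},+\infty)$, a geodesic of $\Ha$ of the type treated in Lemma \ref{Lem:hyper-sector-inH} with $r_0=\sqrt{t_0}$.

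Since $f$ is an isometry, $S_{\mathcal K_0}(\gamma_0,R)=f^{-1}\bigl(S_\Ha(f\circ\gamma_0,R)\bigr)$, and by Lemma \ref{Lem:hyper-sector-inH} the right-hand side equals $f^{-1}\bigl(V(\beta_0,\sqrt{t_0})\cup D^{hyp}_\Ha(\sqrt{t_0},R)\bigr)$ for the $\beta_0\in(0,\pi/2)$ with $k_\Ha(1,e^{i\beta_0})=R$. Again because $f$ is an isometry, $f^{-1}(D^{hyp}_\Ha(\sqrt{t_0},R))=D^{hyp}_{\mathcal K_0}(it_0,R)$, which after translation gives the disc term $D^{hyp}_{\mathcal K_p}(it_0+p,R)$ in the statement. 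It remains to identify $f^{-1}(V(\beta_0,\sqrt{t_0}))$. Writing $\zeta=re^{i\psi}$ with $r>0$ and $\psi\in(-\pi/2,3\pi/2)$ ranging over the slit plane, one has $-i\zeta=re^{i(\psi-\pi/2)}$ and hence $f(\zeta)=\sqrt r\,e^{i(\psi-\pi/2)/2}$; thus $\arg f(\zeta)=(\psi-\pi/2)/2$ and $|f(\zeta)|=\sqrt r$. Therefore $f(\zeta)\in V(\beta_0,\sqrt{t_0})$ exactly when $|\arg f(\zeta)|<\beta_0$ and $|f(\zeta)|>\sqrt{t_0}$, i.e. when $|\psi-\pi/2|<2\beta_0$ and $r>t_0$, which is precisely the condition that $\zeta$ lie in the vertical sector $iV(2\beta_0,0)$ of half-angle $2\beta_0$ about the positive imaginary axis, with the closed disc $\{|\zeta|\le t_0\}$ removed. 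Setting $\beta:=2\beta_0\in(0,\pi)$ and translating by $p$ yields the term $\bigl(p+iV(\beta,0)\bigr)\setminus\{|\zeta-p|\le t_0\}$, completing the proof.

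The bookkeeping that the square-root map doubles angles (and in particular that the full sector $V(\beta,0)$ with $\beta$ possibly close to $\pi$ corresponds to a half-plane sector $V(\beta/2,0)$ with half-angle $<\pi/2$) is the one place where a little care is needed; everything else is a direct transcription of Lemma \ref{Lem:hyper-sector-inH} through the isometry $f$. The only genuinely delicate point is fixing the branch of $\sqrt{-i\zeta}$ correctly on the slit plane $\mathcal K_0$ and checking that it maps $\partial\mathcal K_0$ onto the imaginary axis $\partial\Ha$ with the slit tip $0$ going to $0$ and $\infty$ going to $\infty$; once this is pinned down, the angle computation above is routine and the decomposition falls out immediately.
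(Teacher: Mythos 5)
Your proposal is correct and follows exactly the route the paper indicates: the paper leaves this as ``a simple direct computation, using the Riemann map $\zeta\mapsto\sqrt{-i\zeta}$ from $\mathcal K_0$ to $\Ha$,'' and your argument is precisely that computation, transporting Lemma \ref{Lem:hyper-sector-inH} through the isometry, with the correct branch bookkeeping and the angle-doubling giving $\beta=2\beta_0\in(0,\pi)$.
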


The following corollary gives a simple geometric condition for the preimage of a line to converge non-tangentially to the boundary of the disc:

\begin{corollary}\label{Cor:sector-implies-nt}
Let $\Delta\subsetneq \C$ be a simply connected domain and $f:\D \to \Delta$  a Riemann map. Suppose there exists  $p\in \C$
such that $\{p-it, t\geq 0\}\subset \C\setminus \Delta$ and $\{p+it, t> 0\}\subset \Delta$. If there exist $N\geq 0$ and $\beta\in (0,\pi)$ such that  $p+iN+iV(\beta,0)\subset \Delta$, then there exist $C>1$, $N'>0$, such that for every $T>N'$ the curve $[N', T]\ni t\mapsto p+it$ is a $(C,0)$-quasi-geodesic in $\Delta$. In particular, there exists
$\sigma\in \partial \D$ such that $(0,+\infty)\ni t\mapsto f^{-1}(p+it)$ converges non-tangentially to $\sigma$ as $t\to +\infty$.
\end{corollary}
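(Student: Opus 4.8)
The plan is to recognise the vertical half-line $\{p+it:\ t>0\}$ as a complete geodesic of the Koebe domain $\mathcal{K}_p$ and then to invoke Proposition~\ref{Prop:suff-qc-cont} with $U=\mathcal{K}_p$, the whole issue being to verify the hypothesis $S_{\mathcal{K}_p}(\gamma,R)\subset\Delta$ for a well-chosen geodesic $\gamma$ and a well-chosen $R$. First I would note that $\{p-it:\ t\ge0\}=\{\zeta:\ \Re\zeta=\Re p,\ \Im\zeta\le\Im p\}$ is precisely the slit deleted in the definition of $\mathcal{K}_p$, so the hypothesis gives $\Delta\subseteq\mathcal{K}_p$. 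Since $\mathcal{K}_p$ is symmetric about $\{\Re\zeta=\Re p\}$, the curve $\gamma_p(t)=p+it$, $t>0$, is a complete geodesic of $\mathcal{K}_p$; under the Riemann map $\phi(\zeta)=\sqrt{-i(\zeta-p)}$ onto $\Ha$ it corresponds to the geodesic $(0,+\infty)$, whence $k_{\mathcal{K}_p}(p+is,p+it)=\frac14|\log(t/s)|$ and $k_{\mathcal{K}_p}(\gamma_p(t),\gamma_p(t_0))\to+\infty$ as $t\to0^+$ and as $t\to+\infty$. Fix $t_0>0$ and let $\gamma$ be $\gamma_p|_{[t_0,+\infty)}$ reparametrised on $[0,+\infty)$; it is a geodesic of $\mathcal{K}_p$ with $k_{\mathcal{K}_p}(\gamma(t),\gamma(0))\to+\infty$.

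Next I would pin down the shape of the hyperbolic sector. By Lemma~\ref{Lem:sector-koebe} there is $\beta_R\in(0,\pi)$, with $\beta_R\to0^+$ as $R\to0^+$ (clear from the proof of that lemma and Lemma~\ref{Lem:hyper-semipiano}(4)), such that
\[
S_{\mathcal{K}_p}(\gamma,R)=\big((p+iV(\beta_R,0))\setminus\{\zeta:\ |\zeta-p|\le t_0\}\big)\ \cup\ D^{hyp}_{\mathcal{K}_p}(p+it_0,R).
\]
The first piece lies in $(p+iV(\beta_R,0))\cap\{|\zeta-p|>t_0\}$. For the disc piece, Remark~\ref{Rem:disco-in-sector-qg} applied to the complete geodesic $\gamma_p$ of $\mathcal{K}_p$ gives $D^{hyp}_{\mathcal{K}_p}(p+it_0,R)\subset\{z:\ k_{\mathcal{K}_p}(z,\gamma_p((0,+\infty)))<R\}=p+iV(\beta_R,0)$ (the last equality being the same $\phi$-computation), while transporting Lemma~\ref{Lem:hyper-semipiano}(5) to $\Ha$ via $\phi$ forces $|\zeta-p|>t_0e^{-4R}$ on that disc. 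Altogether
\[
S_{\mathcal{K}_p}(\gamma,R)\ \subset\ (p+iV(\beta_R,0))\cap\{\zeta:\ |\zeta-p|>t_0e^{-4R}\}.
\]

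I expect the main obstacle to be the last, deceptively elementary, step: a quantifier-sensitive Euclidean inclusion of this truncated thin vertical cone into the given wider cone $p+iN+iV(\beta,0)\subset\Delta$. I would first choose $R>0$ so small that $\beta_R<\beta$, and only afterwards choose $t_0=:N'$ so large that $(p+iV(\beta_R,0))\cap\{|\zeta-p|>t_0e^{-4R}\}\subset p+iN+iV(\beta,0)$; this is legitimate because for $\zeta=p+i\rho e^{i\theta}$ with $|\theta|<\beta_R$ one has $\zeta-(p+iN)=i(\rho e^{i\theta}-N)$ and $\arg(\rho e^{i\theta}-N)\to\theta$ as $\rho\to+\infty$ uniformly in $\theta$, so $|\arg(\rho e^{i\theta}-N)|<\beta$ once $\rho$ exceeds a threshold depending only on $N,\beta,\beta_R$. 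With these choices $S_{\mathcal{K}_p}(\gamma,R)\subset\Delta\subseteq\mathcal{K}_p$, so Proposition~\ref{Prop:suff-qc-cont} yields $C>1$ with the property that $[0,T]\ni t\mapsto\gamma(t)$ is a $(C,0)$-quasi-geodesic in $\Delta$ for every $T$; equivalently, $[N',T]\ni t\mapsto p+it$ is a $(C,0)$-quasi-geodesic in $\Delta$ for every $T>N'$. Finally, $f^{-1}(p+it)=f^{-1}(\gamma(t-N'))$ converges non-tangentially to some $\sigma\in\partial\D$ as $t\to+\infty$ by the last assertion of Proposition~\ref{Prop:suff-qc-cont}.

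A shorter route, bypassing Lemma~\ref{Lem:sector-koebe}, is to check the quasi-geodesic property straight from the Distance Lemma: by monotonicity of the hyperbolic distance $k_\Delta(p+is,p+it)\ge k_{\mathcal{K}_p}(p+is,p+it)=\frac14\log(t/s)$, while $p+iN+iV(\beta,0)\subset\Delta$ gives $\delta_\Delta(p+iu)\ge c\,(u-N)$ for $u>N$, hence $\ell_\Delta(\gamma;[s,t])\le\frac1c\log\frac{t-N}{s-N}$ by Theorem~\ref{Thm:Distance-Lemma-inf}; writing the ratio of these logarithms as an average of $x\mapsto(1-N/x)^{-1}$ over $[s,t]$ shows it is bounded for $N'<s\le t$ (with $N'>N$), which produces $C$ with additive constant $0$, after which Corollary~\ref{Cor:shadow} and Proposition~\ref{Prop:conv-nt-sc} conclude.
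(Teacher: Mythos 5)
Your first argument is, in essence, the paper's own proof: the paper also observes $\Delta\subseteq\mathcal K_p$, takes the vertical half-line as a geodesic of $\mathcal K_p$, and verifies $S_{\mathcal K_p}(\gamma_{N'},R)\subset p+iN+iV(\beta,0)\subset\Delta$ via Lemma \ref{Lem:sector-koebe} and Remark \ref{Rem:disco-in-sector-qg} before invoking Proposition \ref{Prop:suff-qc-cont}; the only difference is bookkeeping (the paper fixes an intermediate angle $\beta'<\beta$ and defines $R$ as the distance from the intersection point $w'$ to the geodesic, then pushes the hyperbolic disc up by choosing $N'$ large, whereas you first shrink $R$ so that $\beta_R<\beta$ and then truncate at a large $t_0$), so this route is correct and essentially identical. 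Your ``shorter route'' is a genuinely different and valid variant worth noting: instead of locating a full hyperbolic sector of $\mathcal K_p$ inside $\Delta$, you check the $(C,0)$-quasi-geodesic inequality directly, bounding $k_\Delta(p+is,p+it)$ from below by $k_{\mathcal K_p}(p+is,p+it)=\tfrac14\log(t/s)$ (domain monotonicity) and $\ell_\Delta$ from above via $\kappa_\Delta(z;v)\le |v|/\delta_\Delta(z)$ together with $\delta_\Delta(p+iu)\gtrsim u-N$ coming from the Euclidean cone; the ratio-of-logarithms estimate is correct and yields an explicit constant $C$ depending only on $N,N',\beta$. This bypasses Lemma \ref{Lem:sector-koebe} and Remark \ref{Rem:disco-in-sector-qg} entirely (at the cost of still needing Corollary \ref{Cor:shadow} and Proposition \ref{Prop:conv-nt-sc} for the non-tangential conclusion, exactly as inside Proposition \ref{Prop:suff-qc-cont}), and it is arguably more elementary and quantitative, while the paper's route has the advantage of producing the hyperbolic-sector inclusion itself, which is what Theorem \ref{Thm:nec-suff-non-tg} and Proposition \ref{sector-implies-convergnt} reuse.
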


\begin{proof} By hypothesis, $\Delta\subseteq \mathcal K_p$ and for every $a\geq 0$ the curve $\gamma_a:(a,+\infty)\to \mathcal K_p$, $\tilde\gamma(t)=p+it$ is a geodesic in $\mathcal K_p$. Therefore, according to Proposition \ref{Prop:suff-qc-cont}, it is enough to show that there exist $a\geq 0$ and $R>0$ such that $S_{\mathcal K_p}(\gamma_a, R)\subset \Delta$.

Let $\beta'\in (0,\beta)$. Let $w'$ be the point of intersection between $\{z=p+i\rho e^{i\beta'}, \rho>0\}$ and $\{z=p+iN+i\rho e^{i\beta}, \rho>0\}$. Let $R:=\inf_{\rho>0}k_{\mathcal K_p}(w', p+i\rho)$. By Remark \ref{Rem:disco-in-sector-qg}, for all $t>0$,
\begin{equation}\label{Eq:palla-koebe-insect}
D^{hyp}_{\mathcal K_p}(it+p, R)\subset \{z\in \mathcal K_p: k_{\mathcal K_p}(z, p+i(0,+\infty))<R\}=p+iV(\beta',0),
\end{equation}
 where the last equality follows at once using the biholomorphism $\mathcal K_p \ni \zeta\mapsto \sqrt{-i\zeta}\in \Ha$.

Let $t_0>N$. Since, $\lim_{t\to +\infty}k_{\mathcal K_p}(p+it_0, p+it)=+\infty$ and
\[
(p+iV(\beta',0)\cap \{z\in \C: |z-p|>|w'-p|\})\subset p+iN+iV(\beta, 0),
\]
 there exists $N'>0$ such that $D^{hyp}_{\mathcal K_p}(iN'+p, R)\subset p+iN+iV(\beta, 0)$.

Hence,
\begin{equation*}
\begin{split}
&\left((p+iV(\beta', 0))\setminus \{\zeta\in \C: |\zeta-p|\leq N'\}\right)\cup D^{hyp}_{\mathcal K_p}(p+iN', R)\\& \subset p+iN+iV(\beta,0) \subset \Delta,
\end{split}
\end{equation*}
which, by Lemma \ref{Lem:sector-koebe}, implies that $S_{\mathcal K_p}(\gamma_{N'}, R)\subset \Delta$, and we are done.
\end{proof}

As a corollary of the previous results we have:

\begin{proposition}\label{sector-implies-convergnt}
Let $(\phi_t)$ be a non-elliptic semigroup in $\D$, let $h$  be its Koenigs function, $\Omega=h(\D)$ and $\tau\in \partial \D$ its  Denjoy-Wolff point. If there exist $\beta\in (0,\pi/2)$ and $q\in \Omega$ such that $q+iV(\beta,0)\subset \Omega$, then $\phi_t(z)$ converges non-tangentially to $\tau$ for all $z\in \D$.
\end{proposition}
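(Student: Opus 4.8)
The plan is to reduce everything to the half-plane picture and then invoke Corollary~\ref{Cor:sector-implies-nt}. Recall that for a non-elliptic semigroup $(\phi_t)$ with Koenigs function $h$ and Denjoy--Wolff point $\tau\in\partial\D$, the canonical model satisfies $h(\phi_t(z))=h(z)+it$, the domain $\Omega=h(\D)$ is starlike at infinity, and $h:\D\to\Omega$ is a Riemann map carrying $\tau$ (more precisely the prime end $\underline{x}_\tau$) to the prime end of $\Omega$ "at $i\infty$"; thus the trajectory $t\mapsto\phi_t(z)$ corresponds, under $h$, to the vertical ray $t\mapsto h(z)+it$. So it suffices to show that for every $w_0\in\Omega$ the curve $[0,+\infty)\ni t\mapsto w_0+it$ has preimage under $h$ converging non-tangentially to $\tau$.

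First I would fix $z\in\D$, set $w_0:=h(z)$, and apply a translation so that, after replacing $\Omega$ by $\Omega-w_0$ and $h$ by $h-w_0$, we may assume $w_0=0\in\Omega$; the hypothesis becomes $q+iV(\beta,0)\subset\Omega$ for some $q\in\Omega$ and $\beta\in(0,\pi/2)$. The key geometric observation is that $\Omega$ is starlike at infinity, so $\Omega+is\subset\Omega$ for all $s\ge0$; combined with the open sector condition this forces $\Omega$ to omit a downward vertical half-line and to contain a full vertical sector at some height. Concretely, pick $p\in\C$ on the downward boundary ray emanating from the sector's vertex region: I would argue that there is $p\in\C$ with $\{p-it:t\ge0\}\subset\C\setminus\Omega$ (using that $\Omega\ne\C$ together with starlikeness at infinity — the complement contains a point, hence a downward ray from it, and one adjusts to line it up under the sector) and $\{p+it:t>0\}\subset\Omega$, while $p+iN+iV(\beta,0)\subset\Omega$ for a suitable $N\ge0$ — this last from the given sector $q+iV(\beta,0)$ after sliding it vertically and shrinking the aperture slightly if needed to center it over $p$.

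With $p$, $\beta$, $N$ in hand, Corollary~\ref{Cor:sector-implies-nt} applied to $\Delta=\Omega$ and the Riemann map $h:\D\to\Omega$ yields $C>1$, $N'>0$, and $\sigma\in\partial\D$ such that $(0,+\infty)\ni t\mapsto h^{-1}(p+it)$ converges non-tangentially to $\sigma$. It remains to identify $\sigma$ with $\tau$ and to upgrade from the particular ray $p+it$ to the ray $0+it=h(z)+it$. For the first point: both rays $p+it$ and $w_0+it$ tend to the prime end of $\Omega$ at $i\infty$ (by starlikeness at infinity and the structure of the canonical model), which corresponds under $h$ to $\underline{x}_\tau$, so $\sigma=\tau$. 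For the second: the two vertical rays $t\mapsto p+it$ and $t\mapsto w_0+it$ stay within bounded Euclidean horizontal distance of each other inside the sector region $p+iN+iV(\beta,0)$ (for $t$ large), hence by the Distance Lemma (Theorem~\ref{Thm:Distance-Lemma}) within bounded $k_\Omega$-distance; therefore $h^{-1}(w_0+it)$ lies in a fixed hyperbolic sector around the geodesic shadowing $t\mapsto h^{-1}(p+it)$, and Proposition~\ref{Prop:conv-nt-sc} gives non-tangential convergence of $h^{-1}(w_0+it)=\phi_t(z)$ to $\tau$. Since $z\in\D$ was arbitrary, the proposition follows.

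The main obstacle is the bookkeeping in the geometric setup: producing the base point $p$ with simultaneously $\{p-it:t\ge0\}\subset\C\setminus\Omega$, $\{p+it:t>0\}\subset\Omega$, and a vertical sub-sector $p+iN+iV(\beta,0)\subset\Omega$, honestly from "$\Omega$ starlike at infinity and $q+iV(\beta,0)\subset\Omega$". One must use that starlikeness at infinity makes $\Omega$ a union of upward rays, so its complement is a union of downward rays; choosing $p$ on one such complementary downward ray that also lies under the (slightly narrowed, vertically slid) sector requires checking that the sector's horizontal extent can be arranged to cover such a $p$ — this is where a little care with the aperture $\beta\in(0,\pi/2)$ versus $\pi$ in Corollary~\ref{Cor:sector-implies-nt} is needed, but it is elementary. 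Everything after that is a direct citation of the localization and quasi-geodesic machinery already developed.
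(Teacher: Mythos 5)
Your proposal is correct in substance, but it is organized differently from the paper's proof. The paper does not route through Corollary \ref{Cor:sector-implies-nt}: it fixes an \emph{arbitrary} $p\in\partial\Omega$ (so that $\Omega\subset\mathcal K_p$ by starlikeness at infinity, with no need for the upward ray from $p$ to lie in $\Omega$), slides the sector so that $w_0+iV(\beta,0)\subset\Omega$ with $w_0$ on the vertical line through $p$, constructs by hand a hyperbolic sector $S_{\mathcal K_p}(\gamma,R)\subset\Omega$ of the Koebe domain around the geodesic $\gamma(t)=p+i(t_0+t)$ (Lemma \ref{Lem:sector-koebe}), and applies Theorem \ref{Thm:nec-suff-non-tg} to the trajectory of the point $z_0$ with $h(z_0)=p+it_0$, which lies on $\gamma$; the passage from this single trajectory to every $z\in\D$ is then by the known fact that the slope does not depend on the initial point. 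You instead verify the hypotheses of Corollary \ref{Cor:sector-implies-nt} (which forces you to choose $p$ as the top endpoint of a maximal complementary downward ray, so that $\{p+it: t>0\}\subset\Omega$ — a condition the paper's route never needs) and then transfer from the ray over $p$ to the trajectory $h(z)+it$ by a direct hyperbolic comparison. Both routes run on the same machinery, since Corollary \ref{Cor:sector-implies-nt} is proved by exactly the Koebe-domain construction the paper repeats; the genuinely different part is your last step, which is self-contained (no appeal to slope-independence) at the cost of an extra estimate.

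Two points should be tightened. First, the existence of $p$: take any $x_0\in\C\setminus\Omega$; since every vertical line eventually enters $q+iV(\beta,0)$, starlikeness at infinity gives that $\{t\in\R: x_0+it\in\Omega\}$ is a nonempty open upper half-line $(T,+\infty)$, and $p:=x_0+iT$ satisfies both ray conditions; moreover no shrinking of the aperture is needed, because $V(\beta,0)$ is a convex cone, so $p+iN+iV(\beta,0)\subset q+iV(\beta,0)\subset\Omega$ as soon as $N$ is large enough that $p+iN\in q+i\overline{V(\beta,0)}$. Second, in the transfer step, bounded Euclidean distance between the two vertical rays is not by itself enough to bound $k_\Omega$: you must apply the upper bound of Theorem \ref{Thm:Distance-Lemma} to the horizontal segment joining $h(z)+it$ and $p+it$ and use that, once this segment lies in $p+iN+iV(\beta,0)$, the boundary distance $\delta_\Omega$ grows linearly in $t$ along it, so $k_\Omega(h(z)+it,\,p+it)\to 0$. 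With that made explicit, your conclusion via the shadowing geodesic and Proposition \ref{Prop:conv-nt-sc} is fine, and $\sigma=\tau$ is immediate because $h^{-1}(p+it)$ is a tail of a trajectory of the semigroup.
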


\begin{proof}
Let $p\in \partial \Omega$. Then $\Omega\subset \mathcal K_p$. Let $w_0$ be the point of intersection of the line $\{\zeta\in \C: \Re \zeta=\Re p\}$ and the boundary of the sector $q+iV(\beta,0)$. Since the domain is starlike at infinity, it follows at once that $w_0+iV(\beta,0)\subset \Omega$. Let $\alpha\in (0,\beta)$. Let $w'$ be the point of intersection between $\{z=p+i\rho e^{i\alpha}, \rho>0\}$ and $\{z=w_0+i\rho e^{i\beta}, \rho>0\}$. Let $R:=\inf_{\rho>0}k_{\mathcal K_p}(w', p+i\rho)$. Finally, since $D^{hyp}_{\mathcal K_p}(it+p, R)\subset V(\alpha, 0)$ for all $t>0$, there exists $t_0\in \R$ be such that $D^{hyp}_{\mathcal K_p}(it_0, R)\subset w_0+iV(\beta,0)$. Hence,
\begin{equation}\label{eq:insidekoe}
(p+iV(\alpha, 0)\setminus \{\zeta\in \C: |\zeta-p|\leq t_0\}\cup D^{hyp}_{\mathcal K_p}(p+it_0, R)\subset w_0+iV(\beta,0)\subset \Omega.
\end{equation}
Let $\gamma:[0,+\infty)$ be given by $\gamma(t)=p+i(t_0+t)$. The curve $\gamma$ is a geodesic for $\mathcal K_p$ such that $\lim_{t\to+\infty}k_{\mathcal K_p}(\gamma(0), \gamma(t))=+\infty$. By \eqref{eq:insidekoe} and Lemma \ref{Lem:sector-koebe}, $S_{\mathcal K_p}(\gamma, R)\subset \Omega$. Moreover, if $z_0\in \D$ is such that $h(z_0)=p+it_0$, then $h(z_0)+it\in S_{\mathcal K_p}(\gamma, R)$ for all $t\geq 0$. Hence, by Theorem \ref{Thm:nec-suff-non-tg}, $\phi_t(z_0)$ converges non-tangentially --- hence, $\phi_t(z)$ converges non-tangentially to $\tau$ for all $z\in \D$.
\end{proof}

As a direct corollary from the previous proposition we have

\begin{corollary}
Let $(\phi_t)$ be a non-elliptic semigroup in $\D$, let $h$  be its Koenigs function, $\Omega=h(\D)$ and $\tau\in \partial \D$ its  Denjoy-Wolff point. Suppose $w_0\in \Omega$. If
\[
\liminf_{t\to+\infty}\frac{\delta_\Omega(w_0+it)}{t}>0
\]
then $\phi_t(z)$ converges to $\tau$ non-tangentially, for all $z\in \D$.
\end{corollary}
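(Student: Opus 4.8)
The plan is to deduce the statement from Proposition~\ref{sector-implies-convergnt}: it is enough to exhibit $\beta'\in(0,\pi/2)$ and a point $q\in\Omega$ with $q+iV(\beta',0)\subset\Omega$. To do this I would first pick a constant $c$ with $0<c<\liminf_{t\to+\infty}\delta_\Omega(w_0+it)/t$. By definition of $\liminf$ there is $T>0$ such that $\delta_\Omega(w_0+it)>ct$ for every $t\geq T$; in particular $w_0+it\in\Omega$ for all $t\geq T$, and the Euclidean disc of centre $w_0+it$ and radius $ct$ is contained in $\Omega$ for every $t\geq T$. Writing an arbitrary point as $w_0+\xi+i\eta$ with $\xi,\eta$ real, it lies in the disc centred at $w_0+i\eta$ of radius $c\eta$ as soon as $\eta\geq T$ and $|\xi|<c\eta$; hence the ``vertical wedge'' $W:=\{w_0+\xi+i\eta:\ \eta>T,\ |\xi|<c\eta\}$ is contained in $\Omega$.

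The remaining point is the elementary observation that $W$ contains a translated vertical sector $q+iV(\beta',0)$ with base point $q\in W\subset\Omega$. Concretely, fix $\beta'\in(0,\pi/2)$ with $\tan\beta'<c$ and set $q:=w_0+iM$ with $M>T$ large. A generic point of $q+iV(\beta',0)$ has the form $w_0+iM+\rho e^{i(\pi/2+\theta)}$ with $\rho>0$ and $|\theta|<\beta'$, that is $w_0+\xi+i\eta$ with $\xi=-\rho\sin\theta$ and $\eta=M+\rho\cos\theta$; then $\eta>M>T$, and since $|\tan\theta|<\tan\beta'<c$ we get $|\xi|=\rho|\sin\theta|<c\rho\cos\theta<c\eta$, so this point lies in $W$. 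Thus $q+iV(\beta',0)\subset W\subset\Omega$, while $q\in\Omega$ because $\delta_\Omega(q)>cM>0$. Now Proposition~\ref{sector-implies-convergnt} applies verbatim and gives that $\phi_t(z)$ converges non-tangentially to $\tau$ for every $z\in\D$, which is the assertion.

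I do not expect a genuine obstacle here: the only step requiring a little care is the plane-geometric claim that a family of Euclidean discs centred along a vertical ray, with radii growing at least linearly in the height, swallows a fixed vertical Euclidean sector; once this is written down the corollary is immediate from the already established proposition (alternatively, one could invoke Corollary~\ref{Cor:sector-implies-nt} together with the fact that $\Omega$ is starlike at infinity, but citing Proposition~\ref{sector-implies-convergnt} directly is cleaner).
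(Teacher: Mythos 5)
Your proof is correct and follows exactly the route the paper intends: the paper states this as a ``direct corollary'' of Proposition~\ref{sector-implies-convergnt} without writing out the details, and your argument (discs of radius $ct$ centred at $w_0+it$ sweep out a vertical wedge, which contains a translated sector $q+iV(\beta',0)$ with $\tan\beta'<c$) is precisely the omitted verification. No gaps.
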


\section{Good boxes and localization}\label{good}

The goal of this section is to prove that if a simply connected domain contains a rectangle whose  height is much larger than the base size---a ``good box''---then the hyperbolic geometry of the domain inside the rectangle is similar to that of a strip. We start with discussing hyperbolic geometry in the strip.

\begin{definition}
For $\rho>0$ we define the {\sl strip of width $\rho$} \index{Strip}
\[
\strip_\rho:=\{\zeta\in \C: 0<\Re \zeta<\rho\}.
\]
For $\rho=1$ we simply write $\strip:=\strip_1$. \end{definition}

\begin{proposition}\label{Prop:strip}
Let $a\in \R$ and $R>0$.
\begin{enumerate}
\item the curve $\gamma_0:\R\ni  t\mapsto a+\frac{R}{2}+it$ is a geodesic of $\strip_R+a$ and, for every $s<t$,
\[
k_{\strip_R+a}(a+\frac{R}{2}+is,a+\frac{R}{2}+it)=\frac{\pi (t-s)}{2R}.
\]
\item For every $z\in \strip_R+a$, the orthogonal projection $\pi_{\gamma_0}(z)$ of $z$ onto $\gamma_0$, {\sl i.e.}, the (only) point $\pi_{\gamma_0}(z)$ such that $k_{\strip_R+a}(z,\gamma_0)= k_{\strip_R+a}(z,\pi_{\gamma_0}(z))$, is
\[
\pi_{\gamma_0}(z)=a+\frac{R}{2}+i\Im z.
\]
\item For every $y\in \R$, the curve $(-\frac{R}{2},\frac{R}{2})\ni s\mapsto s+a+\frac{R}{2}+iy$ is a geodesic of $\strip_R+a$ and for all $s_1, s_2\in (-\frac{R}{2},\frac{R}{2})$, $0<s_2<s_1$ or $s_1<s_2<0$,
\[
\frac{1}{2}\log \frac{R-2|s_2|}{R-2|s_1|}\leq k_{\strip_R+a}(s_1+a+\frac{R}{2}+iy, s_2+a+\frac{R}{2}+iy)\leq \log \frac{R-2|s_2|}{R-2|s_1|}.
\]
\item For every $\delta>0$, the hyperbolic sector $S_{\strip_R+a}(\gamma_0, \delta)=\strip_r+a+\frac{R-r}{2}$, for some $r<R$. Moreover, if $z\in S_{\strip_R+a}(\gamma_0, \delta)$,  then $|\Re z-a-\frac{R}{2}|<\frac{R}{2}(1-e^{-2\delta})$. While, if $z\not\in S_{\strip_R+a}(\gamma_0, \delta)$, then, setting $u(z)=\hbox{sgn}(\Re z-a-\frac{R}{2})$,
\[
k_{\strip_R+a}(z,S_{\strip_R+a}(\gamma_0, \delta))=k_{\strip_R+a}(z, a+\frac{R}{2}+u(z) r+i\Im z).
\]
\item For every $M_2,M_1\in \R$, $M_2>M_1$, let
\[
Q(M_1,M_2):=\inf \{k_{\strip_R+a}(z,w): z,w\in \strip_R+a,\Im z\leq M_1, \Im w\geq M_2\}.
\]
 Then
\[
Q(M_1,M_2)
=\frac{\pi(M_2-M_1)}{2R}.
\]
\item For every $\delta>0$ and $N_0>0$ there exists $N>N_0$  which does not depend on  $R, a$, such that for every $M_1, M_2\in \R$ with $M_2-M_1> RN$, there exists $q\in (M_1, M_2-RN_0)$, such that
every geodesic  $\gamma$ in $\strip_R+a$ joining two points $z, w\in \strip_R+a$ with $\Im w>M_2$ and $\Im z<M_1$ satisfies $\gamma\cap \{\zeta\in \C: q<\Im \zeta< q+RN_0\}\subset  S_{\strip_R+a}(\gamma_0, \delta)$.
\item For every $z, w\in \strip_R+a$ with $\Im w\geq \Im z$, the geodesic joining $z$ and $w$ is contained in $\{\zeta\in \strip_R+a: \Im z\leq \Im \zeta\leq \Im w\}$.
\end{enumerate}
\end{proposition}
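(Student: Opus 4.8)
The plan is to transport everything to the right half-plane $\Ha$ by the explicit biholomorphism
\[
g:\strip_R+a\to\Ha,\qquad g(\zeta)=\exp\!\Big(\tfrac{i\pi}{R}(\zeta-a)-\tfrac{i\pi}{2}\Big),
\]
which satisfies $|g(\zeta)|=e^{-\pi\Im\zeta/R}$ and $\arg g(\zeta)=\tfrac{\pi}{R}(\Re\zeta-a)-\tfrac{\pi}{2}\in(-\tfrac{\pi}{2},\tfrac{\pi}{2})$; pulling back $\kappa_\Ha(w;v)=|v|/(2\Re w)$ gives the basic identity
\[
\kappa_{\strip_R+a}(\zeta;v)=\frac{\pi|v|}{2R\,\sin\!\big(\tfrac{\pi}{R}(\Re\zeta-a)\big)}.
\]
Under $g$ the central line $\gamma_0$ maps to the positive real axis (a geodesic of $\Ha$), vertical lines $\{\Re\zeta=c\}$ map to rays from the origin, and horizontal segments $\{\Im\zeta=y\}$ map to circular arcs $\{|w|=e^{-\pi y/R}\}\cap\Ha$ (again geodesics). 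With this dictionary, (1) follows by integrating the metric along $\gamma_0$, where the $\sin$ equals $1$ (or by computing $k_\Ha(e^{-\pi s/R},e^{-\pi t/R})$); (2) follows from Lemma~\ref{Lem:hyper-semipiano}(3), which gives that the nearest point of $(0,+\infty)$ to $w$ is $|w|$, so the nearest point of $\gamma_0$ to $\zeta$ has the same imaginary part; and (5) follows from the pointwise bound $\kappa_{\strip_R+a}(\zeta;v)\ge\tfrac{\pi|\Im v|}{2R}$ (integrate along an arbitrary path to get the lower bound for $Q$) together with part (1) applied to the pair $a+\tfrac R2+iM_1,\ a+\tfrac R2+iM_2$, which realizes it.

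For (3) I would use that the horizontal segment is a geodesic, so its length equals the distance; substituting $u=\tfrac{\pi}{R}(\Re\zeta-a)$ yields $k=\tfrac12\log\tfrac{\sec\theta_1+\tan\theta_1}{\sec\theta_2+\tan\theta_2}$ with $\theta_j=\pi s_j/R$, and, since $\tfrac{R-2s_j}{2}=\tfrac{R}{\pi}(\tfrac\pi2-\theta_j)$, the two-sided estimate reduces after exponentiating to the monotonicity on $(0,\tfrac\pi2)$ of $\theta\mapsto(\sec\theta+\tan\theta)(\tfrac\pi2-\theta)$ (increasing) and of $\theta\mapsto(\sec\theta+\tan\theta)(\tfrac\pi2-\theta)^2$ (decreasing), both elementary. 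For (4): by part (2), $k_{\strip_R+a}(\zeta,\gamma_0)=k_\Ha(1,e^{i\arg g(\zeta)})$, which is increasing in $|\arg g(\zeta)|$ by Lemma~\ref{Lem:hyper-semipiano}(4); hence $S_{\strip_R+a}(\gamma_0,\delta)=\{|\arg g(\zeta)|<\beta\}$ with $k_\Ha(1,e^{i\beta})=\delta$, which is exactly $\{|\Re\zeta-a-\tfrac R2|<\tfrac{\beta R}{\pi}\}$, i.e. $\strip_r+a+\tfrac{R-r}{2}$ with $r=\tfrac{2\beta R}{\pi}<R$; the inclusion bound amounts to the numerical inequality $\tfrac{\beta}{\pi}\le\tfrac12(1-e^{-2\delta})$ (using $e^{2\delta}=\tfrac{1+\tan(\beta/2)}{1-\tan(\beta/2)}$, which reduces to $\tan u\ge u/(\tfrac\pi2-u)$ on $(0,\tfrac\pi4)$), and the formula for $k_{\strip_R+a}(z,S)$ when $z\notin S$ is the orthogonal-projection statement of (2) applied to the wall of $S$ nearer to $z$. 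Part (7) holds because the squared modulus $y_0^2+\rho^2+2y_0\rho\sin\psi$ is monotone in $\psi\in(-\tfrac\pi2,\tfrac\pi2)$ along the semicircular geodesics of $\Ha$ centered on $i\R$ (and along horizontal half-lines as well), so $\Im\zeta=-\tfrac{R}{\pi}\log|g(\zeta)|$ is monotone along every geodesic of $\strip_R+a$, forcing it into the slab between the heights of its endpoints.

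The real work is (6), and the hard part will be an ``excursion bound''. After rescaling and translating — affine maps, hence hyperbolic isometries — I would reduce to $R=1$, $a=0$ (harmless, since the $N$ to be produced will depend only on $\delta,N_0$), write $S:=S_\strip(\gamma_0,\delta)=\{|\Re\zeta-\tfrac12|<\epsilon\}$ with $\epsilon=\beta/\pi$ as in (4), and let $S^{\pm}$ be the two components of $\strip\setminus S$. The key claim is: there is $L_0=L_0(\delta)$ so that every geodesic segment of $\strip$ contained in $\overline{S^+}$ (or $\overline{S^-}$) has height range $\le L_0$. In $\Ha$, $\overline{S^+}$ is the sector $\{\arg\ge\beta\}$; a geodesic of $\Ha$ lying in it cannot be a horizontal half-line (those do not stay in any sector $\{\arg\ge\beta\}$, $\beta>0$), and among semicircles centered at $iy_0$ only those with $y_0\ge\rho\sec\beta$ qualify, for which the moduli of the points fill $[y_0-\rho,\,y_0+\rho]$, of ratio $\le\tfrac{1+\cos\beta}{1-\cos\beta}$; for a sub-arc merely touching $\overline{S^+}$ one substitutes the ray $\{\arg=\beta\}$ into the circle equation, obtaining $t^2-2y_0\sin\beta\,t+(y_0^2-\rho^2)=0$, whose positive roots have a ratio again bounded in terms of $\beta$ only. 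Taking $-\tfrac1\pi\log$ of the modulus range yields the height bound $L_0(\delta)$.

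The second ingredient is that the heights at which a geodesic $\eta$ of $\strip$ lies in $S$ form a single interval: by part (2), $k_\strip(\cdot,\gamma_0)$ depends only on $|\Re\zeta-\tfrac12|$ and is increasing in it, and a short case analysis shows the argument along a geodesic of $\Ha$ is either monotone or has a single interior extremum on $(-\tfrac\pi2,\tfrac\pi2)$, so the set of parameters with $\eta\in S$ is an interval, hence so is the corresponding set of heights by (7); equivalently, $\strip$ is isometric to the hyperbolic plane, so $S$, being a sublevel set of the convex function ``distance to the geodesic $\gamma_0$'', is totally geodesic. Combining: for a geodesic $\eta$ from $z$ (with $\Im z<M_1$) to $w$ (with $\Im w>M_2$, whence necessarily $\Im z<\Im w$), the inside-$S$ heights form an interval $(y_-,y_+)$ whose complementary pieces $[\Im z,y_-]$ and $[y_+,\Im w]$ lie each entirely in $\overline{S^+}$ or $\overline{S^-}$; so the excursion bound gives $y_--\Im z\le L_0$ and $\Im w-y_+\le L_0$, hence $\eta$ lies in $S$ at every height in $(M_1+L_0,\,M_2-L_0)$. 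Taking $N:=2L_0(\delta)+N_0+1$ (depending only on $\delta,N_0$) and $q:=M_1+L_0$ then proves (6) for all such geodesics simultaneously, and undoing the rescaling restores the factors $R$. Everything beyond the bounded-ratio computation and the interval property is routine bookkeeping with the conformal model.
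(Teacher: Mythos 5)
For parts (1)--(5) and (7) your argument is correct and is essentially the paper's own route: transfer everything to $\Ha$ by the exponential conformal map and read off geodesics, lengths and projections via Lemma \ref{Lem:hyper-semipiano}. Your exact evaluation of the horizontal length in (3), reduced to the monotonicity of $(\sec\theta+\tan\theta)(\tfrac\pi2-\theta)$ and $(\sec\theta+\tan\theta)(\tfrac\pi2-\theta)^2$, and the pointwise bound $\kappa_{\strip_R+a}(\zeta;v)\ge \tfrac{\pi|\Im v|}{2R}$ in (5) are clean alternatives to the paper's use of the Distance Lemma and of Lemma \ref{Lem:hyper-semipiano}(5), and both monotonicity facts do check out. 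Two small remarks on (4): the wall of $S_{\strip_R+a}(\gamma_0,\delta)$ is a vertical line different from the bisectrix, hence \emph{not} a geodesic of the strip, so the last formula is not literally ``(2) applied to the wall''; what is actually needed (and what the paper uses) is that in $\Ha$ the nearest point of $V(\beta,0)$ to $\rho e^{i\theta}$, $\theta>\beta$, is $\rho e^{i\beta}$, which follows from Lemma \ref{Lem:hyper-semipiano}(3) together with the fact that $\{|w|=\rho\}\cap\Ha$ is a geodesic --- the same-height projection you intend, so only the phrasing needs adjusting. (The point so obtained lies on the wall, at horizontal offset $r/2$ from the bisectrix; like the paper's own proof, you in effect prove this corrected version of the displayed formula.)

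Part (6) is where you genuinely depart from the paper: the paper constructs explicit geodesics $F_0^{\pm}$ of $\Ha$, the totally geodesic region $U$ between them and a band $\tilde Q$ in which every geodesic from $A^-$ to $A^+$ is forced into $V(\beta,0)$, whereas you prove an ``excursion bound'' $L_0(\delta)$ for geodesic arcs contained in $\overline{S^{\pm}}$ and combine it with the convexity of the tube $S$ and the monotonicity of $\Im$ along geodesics; this scheme is viable and arguably more conceptual, and the final bookkeeping ($q=M_1+L_0R$, $N=2L_0+N_0+1$) is fine. However, the justification of the excursion bound in the crossing case is wrong as written. When the semicircle centered at $iy_0$ of radius $\rho$ meets the ray $\{\arg=\beta\}$, the portion of that geodesic lying in $\{\arg\ge\beta\}$ is \emph{not} the arc between the two intersection points but the arc(s) adjacent to its ideal endpoints on $i\R$; moreover the ratio $t_+/t_-$ of the positive roots of $t^2-2y_0\sin\beta\,t+(y_0^2-\rho^2)=0$ is \emph{not} bounded in terms of $\beta$ alone (take $y_0=1$, $\rho\to1^-$: then $t_+\to2\sin\beta$ while $t_-\to0$). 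The bound you need is nevertheless true and comes out of the same quadratic once the correct quantities are used: on the inside arcs the moduli range over $[t_+,\,y_0+\rho)$ and $(y_0-\rho,\,t_-]$, and since $t_+t_-=(y_0-\rho)(y_0+\rho)$ and $t_+\ge y_0\sin\beta$ one gets $\tfrac{y_0+\rho}{t_+}=\tfrac{t_-}{y_0-\rho}\le\tfrac{2}{\sin\beta}$ when $0<\rho<y_0$, with the analogous estimate when $\rho\ge|y_0|$ or $y_0<0$ (only one positive root, still $\ge$ a multiple of $\sin\beta$ after normalizing by $\rho$); also sub-arcs of horizontal half-lines, which you dismiss only as complete geodesics, lie in the sector precisely up to the crossing with the ray and have modulus ratio $1/\sin\beta$. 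With these corrections $L_0(\delta)$ exists, depends only on $\delta$, and the rest of your argument for (6) closes.
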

\begin{proof}
The holomorphic function $f:\Ha\ni z\mapsto \frac{Ri}{\pi}\log z+\frac{R}{2}+a$ is a biholomorphism from $\Ha$ to $\strip_R+a$.

(1) Since $\strip_R+a$ is symmetric with respect to the line $\{z\in \C: \Re z=a+\frac{R}{2}\}$, it follows  that $\gamma_0$ is a geodesic. The formula for the hyperbolic distance  follows at once by a direct computation using $f$ and the corresponding expression of $k_\Ha$.

(2) Using  the biholomorphism $f$, this amounts to proving that for every $\rho_1>\rho_2>0$ and $\theta_1, \theta_2\in (-\pi/2,\pi/2)$ we have
\[
k_{\Ha}(\rho_1 e^{i\theta_1}, \rho_2 e^{i\theta_2})\geq k_\Ha(\rho_1,\rho_2),
\]
which follows directly from Lemma \ref{Lem:hyper-semipiano}(3).

(3) By symmetry,  for every $y\in \R$, the curve $\eta:(-\frac{R}{2},\frac{R}{2})\ni s\mapsto s+a+\frac{R}{2}+iy$ is a geodesic. Let $-\frac{R}{2}<s_1<s_2<\frac{R}{2}$. By Theorem \ref{Thm:Distance-Lemma},
\[
\frac{1}{2}\int_{s_1}^{s_2}\frac{|\eta'(s)|}{\delta_{\strip_R+a}(\eta(s))}ds\leq k_{\strip_R+a}(\eta(s_1), \eta(s_2))\leq \int_{s_1}^{s_2}\frac{|\eta'(s)|}{\delta_{\strip_R+a}(\eta(s))}ds.
\]
Simple geometric consideration shows that $\delta_{\strip_R+a}(\eta(s))=\frac{R}{2}-|s|$.  Hence, the estimates follow from a direct computation.

(4) Using again the biholomorphism $f$, it is easy to see that   $\gamma_0$ corresponds to the geodesic $(0,+\infty)$ in $\Ha$ and by Lemma \ref{Lem:hyper-sector-inH},
\[
f^{-1}(S_{\strip_R+a}(\gamma_0, \delta))=S_\Ha((0,+\infty), \delta)=V(\beta,0)
\]
 for some $\beta\in (0,\pi/2)$. Hence,
$S_{\strip_R+a}(\gamma_0, \delta)=\strip_r+a+\frac{R-r}{2}$, for some $r<R$.

Next, assume $z\in S_{\strip_R+a}(\gamma_0, \delta)$ and let $s_1:=\Re z-a-\frac{R}{2}$. Hence, by (2),
\begin{equation*}
\begin{split}
\delta&>k_{\strip_R+a}(z, \gamma_0)=k_{\strip_R+a}(z, a+\frac{R}{2}+i\Im z)\\&=k_{\strip_R+a}(s_1+a+\frac{R}{2}+i\Im z, a+\frac{R}{2}+i\Im z),
\end{split}
\end{equation*}
and from the lower estimate in (3) we obtain
\[
\frac{1}{2}\log \frac{R}{R-2|s_1|}<\delta.
\]
A direct computation shows that this is equivalent to $|\Re z-a-\frac{R}{2}|<\frac{R}{2}(1-e^{-2\delta})$.

Finally, if $z\not\in S_{\strip_R+a}(\gamma_0, \delta)$, using again $f$, the problem reduces to show that, given $\rho e^{i\theta}=f^{-1}(z)$, with $\rho>0$ and $\theta\in (\beta, \pi/2)$, (the case $\theta\in (-\pi/2,\beta)$ is analogous), then
\[
k_\Ha(\rho e^{i\theta}, V(\beta,0))=k_\Ha(\rho e^{i\theta}, \rho e^{i\beta}).
\]
This follows at once from Lemma \ref{Lem:hyper-semipiano}(3).

(5) It is clear that $Q(M_1,M_2)=\inf \{k_{\strip_R+a}(z,w): z,w\in \strip_R+a,\Im z= M_1, \Im w= M_2\}$. Using the biholomorphism $f$, we see that  $\{\zeta\in \strip_R+a: \Im \zeta=M_j\}$ is mapped onto $\{\rho_j e^{i\theta}: \theta\in (-\pi/2,\pi/2)\}$ for some $0<\rho_1<\rho_2$. Hence, the statement is equivalent to
\[
\inf_{\theta, \tilde\theta\in (-\pi/2,\pi/2)}k_{\Ha}(\rho_1 e^{i\theta},  \rho_2 e^{i\tilde\theta})\geq k_\Ha(\rho_1,  \rho_2),
\]
which follows from  Lemma \ref{Lem:hyper-semipiano}(5).

(6) Fix $\delta>0, N_0>0$. We already saw that $f^{-1}(S_{\strip_R+a}(\gamma_0, \delta))=V(\beta,0)$ for some $\beta\in (0,\pi/2)$.

Now (see Figure \ref{fig:slop}),
\begin{figure}[h]
\centering 
\begin{tikzpicture}[scale = 0.8]
\draw [dashed] (0,-7.5) -- (0,7.5) ;
\draw [dashed] (0,0) -- (7,0) ;

\draw (0,-6) arc (-90:90:6) ;
\draw (0,-1) arc (-90:90:1) ;
\draw (0,1) arc (-90:90:2.5) ;
\draw (0,-6) arc (-90:90:2.5) ;
\draw (0,-4.5) arc (-90:90:4.5) ;

\draw (0,0) -- (4.975,7.5) ;
\draw (4.7,7) node[scale=1][right]{$L^+$};

\draw (0,0) -- (4.975,-7.5) ;
\draw (4.7,-7) node[scale=1][right]{$L^-$};

\draw (0,1) arc (-90:90:1.18) ;
\draw (0,-1) arc (90:-90:1.18) ;
\draw (0,-1.33) arc (-90:90:1.33) ;

\draw (2.487,3.75) node[scale=1]{$\bullet$} ;
\draw (2.487,3.8) node[scale=1][right]{$q_2^+$};

\draw (2.487,-3.75) node[scale=1]{$\bullet$} ;
\draw (2.487,-3.75) node[scale=1][right]{$q_2^-$};

\draw (0.731,1.101) node[scale=1]{$\bullet$} ;
\draw (0.77,1.101) node[scale=1][right]{$q_1^+$};

\draw (0.731,-1.101) node[scale=1]{$\bullet$} ;
\draw (0.78,-1.01) node[scale=1][right]{$q_1^-$};

\draw (0,1) node[scale=1][left]{$i$};
\draw (0,-1) node[scale=1][left]{$-i$};
\draw (0,6) node[scale=1][left]{$pi$};
\draw (0,-6) node[scale=1][left]{$-pi$};

\draw (0,0.5) node[scale=1][left]{$A^-$};
\draw[->,>=latex] (-0.2,0.4) to (0.5,0.2) ;

\draw (6.9,3) node[scale=1][left]{$A^+$};

\draw (4,-0.65) node[scale=1][left]{$Q$};

\draw (-0.2,2.5) node[scale=1][left]{$C_0^+$};
\draw[->,>=latex] (-0.25,2.5) to (0.9,2.9) ;

\draw (-0.2,5) node[scale=1][left]{$F_0^+$};
\draw[->,>=latex] (-0.25,5) to (1.55,5.45) ;

\fill [color=gray!20, pattern=north east lines] (0,-1) arc (-90:90:1) ;

\draw [dashed] (3.3,-3) -- (1.8,-1.5) ;
\draw [dashed] (3.1,-2.1) -- (1.6,-0.6) ;
\draw [dashed] (2.8,-1.1) -- (1.3,0.4) ;
\draw [dashed] (4.1,-1.6) -- (2.6,-0.2) ;
\draw [dashed] (3.3,-0.3) -- (1.8,1.2) ;
\draw [dashed] (4.1,-0.3) -- (2.6,1.2) ;
\draw [dashed] (3.7,0.7) -- (2.2,2.2) ;
\draw [dashed] (3.9,1.2) -- (2.4,2.7) ;
\draw [dashed] (4.1,1.7) -- (2.6,3.2) ;


\draw [dashed] (0.8,6.2) -- (2.5,7.9) ;
\draw [dashed] (2.5,6) -- (4.2,7.7) ;
\draw [dashed] (4.65,4.9) -- (6.35,6.6) ;
\draw [dashed] (5.2,3.2) -- (6.9,4.9) ;
\draw [dashed] (6,1.7) -- (7,2.7) ;
\draw [dashed] (6.2,0.5) -- (7,1.3) ;

\draw [dashed] (0.6,-7.7) -- (2.3,-6) ;
\draw [dashed] (2.1,-7.7) -- (3.8,-6) ;
\draw [dashed] (4.65,-6.6) -- (6.35,-4.9) ;
\draw [dashed] (5,-5.1) -- (6.9,-3.2) ;
\draw [dashed] (6,-2.7) -- (7,-1.7) ;
\draw [dashed] (6.2,-1.3) -- (7,-0.5) ;


\end{tikzpicture}
\caption{}\label{fig:slop}
\end{figure}
 let $C_0$ be the  circle with center $\frac{i}{1-\cos \beta}$ and radius $\frac{\cos \beta}{1-\cos\beta}$ and let $C_0^+=C_0\cap \Ha$. Note that, since the center of $C_0$ is on the imaginary axis, $C_0$ intersects orthogonally $i\R$. Hence, $C_0^+$ is a geodesic in $\Ha$. Moreover,  it is easy to see that for $x>0$, the Euclidean distance from $ix$ to $L^+:=\{\rho e^{i\beta}:\rho>0\}$ is $x\cos\beta$, so that $C_0$ is tangent to $L^+$. Also, the end points  of $C_0^+$ are $i$ and $\frac{1+\cos\beta}{1-\cos \beta} i$.

Let now $F_0^+=F_0\cap \Ha$, where $F_0$ is the circle orthogonal to $i\R$ and passing through $i$ and $p i$, for some $p>\frac{1+\cos \beta}{1-\cos\beta}$ to be chosen later. Note that by construction $F_0^+$ intersects $L^+$ into two points, $q_1^+$ and $q_2^+$, $|q_1^+|<|q_2^+|$.
Let $F_0^-$ be the reflection of $F_0^+$ about the real axis, that is, $F_0^-$ is the circle orthogonal to $i\R$ passing through $-i$ and $-p i$.

Let $U^\pm$ be the unbounded connected component of $\Ha\setminus F_0^\pm$. By Lemma \ref{Lem:total-geo-disc}, $U^+$, $U^-$ are totally geodesic. Let $U:=U^+\cap U^-$. Then $U$ is totally geodesic as well since for every two points of $U$ the geodesic joining them is contained in $U^+$ and $U^-$.

Let  $A^-:=\{\rho e^{i\theta}: 0<\rho <1, |\theta|<\pi/2\}$, $A^+:=\{\rho e^{i\theta}: \rho>p, |\theta|<\pi/2\}$,  $\tilde Q:=\{\rho e^{i\theta}: |q_1^+|<\rho<|q_2^+|, |\theta|<\pi/2\}$ and $Q=\tilde Q\cap U$.

Note that $A^-, A^+\subset U$, hence, if $\zeta_0, \zeta_1\in \Ha$ are such that $\zeta_0\in A^-$ and $\zeta_1\in A^+$, the geodesic $\eta:[0,1]\to \Ha$ of $\Ha$ joining $\zeta_0$ and $\zeta_1$ is contained in $U$ and, by construction, it necessarily crosses $V(\beta,0)$. Moreover, by construction, for all $t\in (0,1)$ such that  $\eta(t)\in \tilde Q$, the point $\eta(t)\in Q$.

Since $|q_2^+|/|q_1^+|\to 1$ for $p\to \frac{1+\cos \beta}{1-\cos\beta}$ and $|q_2^+|/|q_1^+|\to +\infty$ for $p\to+\infty$, given $N_0>0$ we can find $p$ such that $\frac{1}{\pi}[\log |q_2^+|-\log |q_1^+|]= N_0$. Let   $p$ be such a number and let $N:=\frac{1}{\pi}\log p$.

Note that $N$  depends only on $\beta$---hence, on $\delta$ and on $N_0$ and that $N>N_0$. A simple computation shows that $f(A^-)=\{z\in \strip_R+a: \Im z<0\}$, and $f(A^+)=\{z\in \strip_R+a: \Im z>\frac{R}{\pi}\log p \}$. Moreover, $f^{-1}(\tilde Q)=\{z\in \strip_R+a: \frac{R\log |q_1^+|}{\pi}<\Im z<\frac{R\log |q_2^+|}{\pi}\}$.

 Therefore, since $f$ maps geodesics of $\Ha$ onto geodesics of $\strip_R+a$, the previous argument shows that for every $z\in \{z\in \strip_R+a: \Im z<0\}$ and $w\in \{z\in \strip_R+a: \Im z>\frac{R}{\pi}\log p \}$ the geodesic $\gamma$ joining $z$ and $w$ satisfies
 \[
 \gamma \cap \{z\in \strip_R+a: \frac{R\log |q_1^+|}{\pi}<\Im z<\frac{R\log |q_2^+|}{\pi}\}\subset S_{\strip_R+a}(\gamma_0, \delta).
 \]

Finally, given $M_1, M_2\in \R$ such that $M_2-M_1>RN$, one can reduce to the previous case using  automorphisms of $\strip_R+a$ of the form $z\mapsto z-ik$, $k\in \R$, and taking into account that such automorphisms are isometries for $k_{\strip_R+a}$ and map $\gamma_0$ onto $\gamma_0$.

(7) If $\Im z=\Im w$, the result follows from (2). If $\Im w>\Im z$, we saw in (6) that for all $\epsilon>0$, $\{\zeta\in \strip_R+a: \Im \zeta< \Im w+\epsilon\}$ and   $\{\zeta\in \strip_R+a: \Im \zeta> \Im z+\epsilon\}$ are totally geodesic in $\strip_R+a$. Hence, their intersection is. Therefore, for all $\epsilon>0$ the geodesic of $\strip_R+a$ joining $z$ and $w$ is contained in $\{\zeta\in \strip_R+a: \Im z+\epsilon<\Im \zeta< \Im w+\epsilon\}$. By the arbitrariness of $\epsilon$, we get the result.
\end{proof}

We now present  several  localization results which will be useful in the subsequent constructions.

Let $a, b\in \R$ and $R>0$. Let
\[
\Omega_{a,b,R}:=\C\setminus \{z\in \C: \Re z\in \{a, a+R\}, \Im z\leq b\}.
\]

\begin{proposition}\label{Prop:local-strip-bound}
Let  $c>1$. Then there exists $D(c)>0$ with the following properties.  Let $D\geq D(c)$ and $R>0$, $a,b\in \R$. Then for all $v\in \C$ and $z\in (\strip_R+a)$ such that $\Im z\leq b-RD$,
\[
\kappa_{\Omega_{a,b,R}}(z;v)\leq \kappa_{\strip_R+a}(z;v)\leq c \kappa_{\Omega_{a,b,R}}(z;v).
\]
 Moreover,  for every $z, w\in (\strip_R+a)$ such that $\Im z, \Im w\leq b-RD$
\[
k_{\Omega_{a,b,R}}(z,w)\leq k_{\strip_R+a}(z,w)\leq c k_{\Omega_{a,b,R}}(z, w).
\]
\end{proposition}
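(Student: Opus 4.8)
The left-hand inequalities are immediate from $\strip_R+a\subset\Omega_{a,b,R}$ together with the monotonicity (under domain inclusion) of the infinitesimal metric and of the hyperbolic distance, so I only discuss the right-hand ones. The first step is to observe that it is enough to treat the normalized case $a=b=0$, $R=1$: the affine biholomorphism $F(\zeta):=(\zeta-a-ib)/R$ maps $\Omega_{a,b,R}$ onto $\Omega:=\Omega_{0,0,1}$ and $\strip_R+a$ onto $\strip:=\strip_1$, carries $\{\Im\zeta\le b-RD\}$ onto $\{\Im\zeta\le -D\}$, and, by conformal invariance of $k$ and of the ratio of infinitesimal metrics at corresponding points, it turns the desired inequalities for $(a,b,R)$ into the same inequalities for $(0,0,1)$; hence a single constant $D(c)$ works for all $a,b,R$, which is precisely the uniformity asked for. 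I also note that $\Omega$ is a simply connected domain: its complement in $\C_\infty$ is $\{it:t\le 0\}\cup\{1+it:t\le 0\}\cup\{\infty\}$, which is connected since both rays limit at $\infty$.

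Next I localize $\Omega$ directly against $\strip$. Let $p\in\partial_C\Omega$ be the prime end of $\Omega$ represented by the null chain of cross-cuts $C_n:=\{x+i(-n):0<x<1\}$ --- the ``bottom'' of the channel between the two slits: each $C_n$ is an open Jordan arc in $\Omega$ with endpoints on the two slits, $C_{n+1}$ separates $C_n$ from $C_{n+2}$ in $\Omega$, and $\diam_{\C_\infty}C_n\to 0$, so this is a genuine null chain, with impression $\{\infty\}$. Since each $C_n\subset\strip$, the null chain is eventually contained in $\strip$, so $p\in\strip^\ast$, and therefore $U^\ast:=\strip^\ast$ is an open neighbourhood of $p$ in $\widehat{\Omega}$ with $U^\ast\cap\Omega=\strip$ simply connected. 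Applying the Localization Lemma (Theorem~\ref{Thm:localiz}) to $\Delta=\Omega$, this $U^\ast$, and the given $c>1$, I obtain an open neighbourhood $V^\ast\subset\strip^\ast$ of $p$ such that, for all $z,w\in V^\ast\cap\Omega$ and all $v\in\C$,
\[
\kappa_\Omega(z;v)\le\kappa_\strip(z;v)\le c\,\kappa_\Omega(z;v),\qquad k_\Omega(z,w)\le k_\strip(z,w)\le c\,k_\Omega(z,w).
\]

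To conclude I need $V^\ast\cap\Omega$ to contain a sub-half-strip $\{\zeta:0<\Re\zeta<1,\ \Im\zeta\le -D(c)\}$ for some $D(c)>0$. This holds because the sets $\bigl(\{\zeta:0<\Re\zeta<1,\ \Im\zeta<-D\}\bigr)^\ast$, $D>0$, form a neighbourhood basis of $p$ in $\widehat{\Omega}$ --- a routine fact (transport the statement to $\overline{\D}$ by a Riemann map of $\Omega$, under which the Euclidean diameters of these half-strips tend to $0$). Fixing such a $D(c)$, any $z,w\in\strip$ with $\Im z,\Im w\le -D(c)$ lie in $V^\ast\cap\Omega$, so the two displayed chains of inequalities apply to them; undoing the normalization (so that $-D(c)$ becomes $b-RD$ for any $D\ge D(c)$) yields exactly the right-hand inequalities in the statement.

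The substance of the argument lies in two structural choices: reducing to the fixed configuration $a=b=0$, $R=1$ by affine scaling, which trivializes the uniformity requirement, and localizing $\Omega$ against $\strip$ itself rather than trying to analyse geodesics of $\Omega$ by hand. After these, everything reduces to the Localization Lemma and the trivial inclusion $\strip\subset\Omega$. The only mildly technical point is the last one --- that the sub-half-strips exhaust $p$ in the Carath\'eodory topology --- which is standard prime-end bookkeeping; it could equally be avoided by taking the half-strip $\{\zeta:0<\Re\zeta<1,\ \Im\zeta<0\}$ in place of $\strip$ as $U^\ast\cap\Omega$ and combining the Localization Lemma with the inclusion $\{\zeta:0<\Re\zeta<1,\ \Im\zeta<0\}\subset\strip$.
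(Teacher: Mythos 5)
Your proposal is correct and follows essentially the same route as the paper: reduce to $a=b=0$, $R=1$ by the affine map $\zeta\mapsto(\zeta-a-ib)/R$, take the prime end of $\Omega_{0,0,1}$ represented by the horizontal cross-cuts $C_n$ at height $-n$, apply the Localization Lemma (Theorem \ref{Thm:localiz}) with $U^\ast=\strip^\ast$, and then observe that the estimates hold on a sub-half-strip far enough down. The only (immaterial) difference is how the last step is phrased: you invoke that the sub-half-strips form a neighbourhood basis of the prime end, while the paper notes that the cross-cuts $C_n$ are eventually contained in $V^\ast\cap\Omega$ --- both rest on the same standard prime-end fact.
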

\begin{proof}
The inequalities on the left hand side follow immediately  since $\strip_R+a\subset \Omega_{a,b,R}$.

Assume now $R=1, a=b=0$ and let $\Omega:=\Omega_{0,0,1}$.

For $n\in \N$ let $C_n:=\{\zeta\in \C: 0\leq \Re \zeta\leq 1, \Im \zeta=-n\}$. Clearly, $(C_n)$ is a null chain in $\Omega$, which represents a prime end $\underline{x}$ of $\Omega$.  Let $\strip^\ast$ be the open set in $\widehat{\Omega}$ defined by $\strip$ (that is, $\strip^\ast$ is the union of $\strip$ and all prime ends for which a representing null chain is eventually contained in $\strip$). Hence, $\strip^\ast$ is an open neighborhood of $\underline{x}$, since, by construction, the interior part  of $C_n$ belongs to $\strip$ for all $n\geq 1$. Moreover, $\strip^\ast\cap \Omega=\strip$, which is simply connected. Therefore, we can apply Theorem \ref{Thm:localiz} to $\underline{x}$ and $\strip^\ast$ and come up with an open set $V^\ast\subset \strip^\ast$ in $\widehat{\Omega}$ which contains $\underline{x}$ and such that
\begin{equation}\label{Eq:estima-banda-out}
\kappa_{\strip}(z;v)\leq c \kappa_{\Omega}(z;v), \quad
 k_{\strip}(z,w)\leq c k_{\Omega}(z, w),
\end{equation}
for all $z, w \in V:=V^\ast\cap \Omega$ and $v\in \C$. Note that since $V^\ast$ is an open neighborhood of $\underline{x}$,  there exists $n_0\in \N$ such that the interior part of $C_n$ is contained in $V$ for all $n\geq n_0$. In particular, \eqref{Eq:estima-banda-out} holds for every $\zeta\in \strip$ such that $\Im \zeta\leq -(n_0+1)$.

Hence, we have proved the result with $D:=-(n_0+1)$ for $R=1, a=b=0$.

Now, assume $R>0$ and $a, b\in \R$. Using the map $\C\ni z\mapsto \frac{1}{R}(z-a-ib)\in \C$, which is  a biholomorphism from $\Omega_{a,b,R}$ to $\Omega$ and maps $(\strip_R+a)$ onto $\strip$ and $\{\zeta\in \strip_R+a: \Im \zeta\leq b-RD\}$ onto $\{\zeta\in \strip: \Im \zeta\leq -D\}$, the result follows at once from \eqref{Eq:estima-banda-out}.
\end{proof}

The next localization result  is a sort of converse of the previous one: we choose the part we want to localize and come up with a constant for the localization.
 We start with a definition:

\begin{definition}
Let $M\in \R, R>0$. The {\sl semi-strip}  of {\sl width $R$ and height $M$}  is
\[
\strip^M_R:=\{\zeta\in \C: 0<\Re \zeta<R, \Im \zeta>M\}.
\]
\end{definition}

\begin{proposition}\label{Prop:estim-strip-var2}
For every $E>0$ there exists $c'=c'(E)>1$ with the following properties. Let $a\in \R$,  $M\in \R$ and $R>0$. Then for all $v\in \C$ and $z\in (\strip_R+a)$ such that $\Im z\geq RE+M$,
\[
\kappa_{\strip_R+a}(z;v)\leq \kappa_{\strip^M_R+a}(z;v)\leq c'\kappa_{\strip_R+a}(z;v).
\]
Moreover, for every $z, w\in (\strip_R+a)$ such that $\Im z, \Im w> RE+M$,
\[
k_{\strip_R+a}(z,w)\leq k_{\strip^M_R+a}(z,w)\leq c' k_{\strip_R+a}(z,w).
\]
\end{proposition}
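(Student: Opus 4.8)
The plan is to reduce, by the obvious scaling and translation $\zeta\mapsto\frac1R(\zeta-a-iM)$ (which is a biholomorphism sending $\strip_R+a$ to $\strip$, $\strip^M_R+a$ to $\strip^0_1=\{0<\Re\zeta<1,\ \Im\zeta>0\}$, and the half-space $\{\Im\zeta\geq RE+M\}$ to $\{\Im\zeta\geq E\}$), to the single case $R=1$, $a=0$, $M=0$. So it suffices to produce, for each $E>0$, a constant $c'=c'(E)>1$ such that for all $z\in\strip$ with $\Im z\geq E$ one has $\kappa_{\strip^0_1}(z;v)\leq c'\,\kappa_{\strip}(z;v)$, and correspondingly for the distance. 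The left-hand inequalities in both displays are immediate from the domain-monotonicity of the infinitesimal metric and of the hyperbolic distance, since $\strip^0_1\subset\strip$; only the right-hand estimates require work.

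For the infinitesimal estimate I would invoke the Localization Lemma (Theorem~\ref{Thm:localiz}) exactly as in the proof of Proposition~\ref{Prop:local-strip-bound}: the relevant prime end of $\strip$ is the one "at $+i\infty$", represented by the null chain $C_n=\{0\leq\Re\zeta\leq1,\ \Im\zeta=n\}$; the semi-strip $\strip^0_1$ defines an open set $(\strip^0_1)^\ast$ in $\widehat{\strip}$ which is a neighbourhood of that prime end and whose intersection with $\strip$ is $\strip^0_1$ itself, hence simply connected. Applying Theorem~\ref{Thm:localiz} with $C=c'$ yields a smaller neighbourhood $V^\ast$, and since $V^\ast$ contains the prime end it contains the interior of $C_n$ for all $n\geq n_0$; thus the estimate $\kappa_{\strip^0_1}(z;v)\leq c'\kappa_{\strip}(z;v)$ holds for all $z\in\strip$ with $\Im z\geq n_0+1$. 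Setting $E(c'):=n_0+1$ and reading the logical dependence the other way — i.e. given $E$, the monotonicity of the Localization Lemma in its parameter $C$ lets us pick $c'=c'(E)$ so that the cutoff height produced is $\leq E$ — gives the infinitesimal statement. (Alternatively, and perhaps more transparently, one can note that the distance from $z=x+iy\in\strip$, $y\geq E$, to $\partial\strip^0_1$ versus to $\partial\strip$ differ only near the bottom edge $\{\Im\zeta=0\}$, and $\delta_{\strip}(z)\leq 1/2$ always while $\delta_{\strip^0_1}(z)\geq c_E\,\delta_{\strip}(z)$ once $y\geq E$ because the extra boundary piece is at distance $\geq E$; then Theorem~\ref{Thm:Distance-Lemma-inf} converts this into the metric comparison with $c'=c'(E)$.)

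For the distance estimate I would use Proposition~\ref{Prop:strip}(7): the geodesic $\eta:[0,1]\to\strip$ of $\strip$ joining two points $z,w$ with $\Im z,\Im w>E$ stays inside $\{\Im\zeta\geq\min(\Im z,\Im w)\}\subset\{\Im\zeta>E\}$, hence inside the region where the infinitesimal comparison just established is valid. Then
\[
k_{\strip^0_1}(z,w)\leq\int_0^1\kappa_{\strip^0_1}(\eta(t);\eta'(t))\,dt\leq c'\int_0^1\kappa_{\strip}(\eta(t);\eta'(t))\,dt=c'\,k_{\strip}(z,w),
\]
which is the claimed bound. Undoing the scaling restores the general $R,a,M$, and we are done. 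The only mild subtlety — and the one place to be careful — is the bookkeeping that makes $c'$ depend on $E$ rather than the other way around; this is why the $\delta$-distance variant via Theorem~\ref{Thm:Distance-Lemma-inf} is attractive, as there the dependence "$E$ given $\Rightarrow$ $c'$ obtained" is explicit and no appeal to monotonicity of the Localization Lemma's constant is needed. I expect no genuine obstacle here; the content is entirely in reusing the two tools (Localization Lemma / Distance Lemma and the vertical-monotonicity of strip geodesics) that have already been set up.
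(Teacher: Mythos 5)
Your final argument is correct, but be aware that of the two routes you sketch for the infinitesimal estimate, only the second one actually works, and it is the one that differs slightly from the paper. The first route (Localization Lemma) has a genuine quantifier problem that you yourself flag: Theorem \ref{Thm:localiz} says ``for every $C>1$ there is a neighbourhood $V^\ast$'', and nothing in its statement or proof asserts that by enlarging $C$ one can force the valid region to contain a \emph{prescribed} set $\{\zeta\in\strip:\Im \zeta\geq E\}$; moreover that set is not relatively compact in $\strip^0_1$ (it touches the side walls), so one cannot close the gap by a compactness argument on the leftover band without redoing, near the walls, exactly the kind of boundary-distance estimate you give afterwards. So ``monotonicity of the Localization Lemma in its parameter $C$'' is not available, and that route as written is a gap.

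Your parenthetical alternative, however, is a complete proof and is the right way to get the dependence $E\mapsto c'(E)$: for $z\in\strip^0_1$ with $\Im z\geq E$ one has $\delta_{\strip^0_1}(z)=\min\{\delta_{\strip}(z),\Im z\}\geq \min\{1,2E\}\,\delta_{\strip}(z)$ (since $\delta_\strip\leq 1/2$), and Theorem \ref{Thm:Distance-Lemma-inf} (with the convexity constant $1/2$ for $\strip$) turns this into $\kappa_{\strip^0_1}(z;v)\leq \frac{2}{\min\{1,2E\}}\kappa_\strip(z;v)$, an explicit $c'(E)$. This is close in spirit to, but cleaner than, the paper's proof: the paper observes that $\delta_{\strip^0_1}=\delta_{\strip}$ off the compact set $K=\{E\leq\Re z\leq 1-E,\ E\leq\Im z\leq 1\}$ (giving the factor $2$ there) and then handles $K$ by continuity/compactness of the metrics, which produces a non-explicit constant; your uniform comparison of boundary distances removes the compactness step altogether. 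For the integrated estimate both arguments are essentially identical: you invoke Proposition \ref{Prop:strip}(7) to keep the $\strip$-geodesic between two high points inside $\{\Im\zeta>E\}$, while the paper deduces the same containment from the total geodesicity of $\strip_1^E$ via Lemma \ref{Lem:total-geo-disc}; either way the pointwise comparison integrates to $k_{\strip^0_1}\leq c' k_{\strip}$, and the affine rescaling $\zeta\mapsto\frac{1}{R}(\zeta-a-iM)$ restores the general case exactly as in the paper.
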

\begin{proof}
The left-hand side estimates follow immediately since $\strip^M_R+a\subset \strip_R+a$.

In order to prove the right-hand side estimates, arguing as in Proposition  \ref{Prop:local-strip-bound}, it is enough to prove the result for $R=1$, $a=0$, $M=0$ and then use the affine map $z\mapsto \frac{1}{R}(z-a-iM)$ to pass to the general case.

Fix $E>0$. Let $K:=\{z\in \C: E\leq \Re z\leq 1-E, E\leq \Im z\leq 1\}$ (possibly $K$ is empty if $E>1$). For $z\in \strip_1^0$ such that $\Im z\geq E$ and $z\not\in K$, we have $\delta_{\strip_1^0}(z)=\delta_{\strip_1}(z)$, hence, from Theorem \ref{Thm:Distance-Lemma-inf},
\[
\kappa_{\strip_1^0}(z;v)\leq \frac{|v|}{\delta_{\strip_1^0}(z)}=2 \frac{|v|}{2\delta_{\strip_1}(z)}\leq 2\kappa_{\strip_1}(z;v).
\]
In case $K$ is non-empty,  $K$ is compact in $\strip_1^0$ and in $\strip_1$. Since the hyperbolic metric is continuous in $z$, the following numbers are well defined:
\[
q:=\min_{z\in K}\kappa_{\strip_1}(z;1), \quad Q:= \max_{z\in K}\kappa_{\strip^0_1}(z;1).
\]
Moreover, $q>0$ (for otherwise the hyperbolic norm of $1$ would be $0$ at an interior point). Hence, for $z\in K$ and $v\in \C$,
\[
\kappa_{\strip_1^0}(z;v)=|v|\kappa_{\strip_1^0}(z;1)\leq \frac{|v|Q}{q}q\leq \frac{|v|Q}{q}\kappa_{\strip_1}(z;1)=\frac{Q}{q}\kappa_{\strip_1}(z;v).
\]
Taking $c'=\max\{2, \frac{Q}{q}\}$ we have the first estimate.

In order to prove the second inequality, note that $(0,1)+iE$ is  a geodesic in $\strip_1$ by symmetry. Then, Lemma \ref{Lem:total-geo-disc} guarantees that $\strip_1^E$ is totally geodesic in $\strip_1$. Therefore, given $z, w\in \strip_1^E$, let $\gamma:[0,1]\to \strip_1$ be the geodesic for $\strip_1$ which joins $z$ and $w$. Hence, $\gamma([0,1])\subset \strip_1^E$. Therefore, for what we have already  proved,
\begin{equation*}
\begin{split}
k_{\strip^0_1}(z,w)&\leq \ell_{\strip^0_1}(\gamma;[0,1])=\int_0^1\kappa_{\strip^0_1}(\gamma(t);\gamma'(t))dt \\&\leq c' \int_0^1\kappa_{\strip_1}(\gamma(t);\gamma'(t))dt = c' k_{\strip_1}(z,w),
\end{split}
\end{equation*}
and we are done.
\end{proof}

The next result allows us to estimate the hyperbolic distance and the displacement of geodesics in simply connected domains which contain ``good boxes'':

\begin{proposition}\label{good box}
Let $c>1$, let $D\geq D(c)$, where $D(c)>0$ is  given by Proposition \ref{Prop:local-strip-bound}, and fix $E\in(0, D)$. Then there exist $\epsilon=\epsilon(c,D,E)>0$ and $C=C(c,D,E)>1$ with the following property. Let $\Omega\subset \C$  be any simply connected domain such that
\begin{enumerate}
\item $\Omega\subset \Omega_{a,b,R}$ for some   $a, b\in \R$ and $R>0$
\item $\strip^M_R+a\subset \Omega$ for some $-\infty\leq M<b$,
\item $b-M>2RD$.
\end{enumerate}
Let
\begin{equation}\label{Eq:good-box-def}
B:=\{\zeta\in (\strip_R+a): M+RE < \Im \zeta < b-RD\}.
\end{equation}
Then, if $\gamma:[u_0,u_1]\to \Omega$ is a geodesic for $\Omega$ contained in $B$,  and  $\eta:[v_0,v_1]\to \strip_R+a$ is the geodesic in $\strip_R+a$ such that  $\gamma(u_j)=\eta(v_j)$, $j=0,1$, then for every $u\in [u_0,u_1]$ and $v\in [v_0,v_1]$,
\begin{equation}\label{Eq:box-strip-Om}
k_{\strip_R+a}(\gamma(u), \eta)< \epsilon, \quad k_{\strip_R+a}(\eta(v), \gamma)< \epsilon.
\end{equation}
Moreover, if $\eta:[v_0,v_1]\to B$ is a geodesic for $\strip_{R}+a$ and $\gamma:[u_0,u_1]\to \Omega$ is the geodesic for $\Omega$ such that  $\gamma(u_j)=\eta(v_j)$, $j=0,1$,  then for every $u\in [u_0,u_1]$ and $v\in [v_0,v_1]$,
\begin{equation}\label{Eq:box-Om-strip}
k_{\Omega}(\gamma(u), \eta)< \epsilon, \quad k_{\Omega}(\eta(v), \gamma)< \epsilon.
\end{equation}
In addition,  for every $z,w \in B$,
\[
\frac{1}{C} k_{\strip_R+a}(z,w)\leq k_\Omega(z,w)\leq C k_{\strip_R+a}(z,w).
\]
\end{proposition}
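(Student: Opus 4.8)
The plan is to deduce all the assertions from the two localization results already at our disposal, namely Propositions~\ref{Prop:local-strip-bound} and~\ref{Prop:estim-strip-var2}, combined with the elementary strip geometry of Proposition~\ref{Prop:strip}(7) and Gromov's shadowing lemma, Theorem~\ref{Gromov}.

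\emph{Step 1: comparison of the infinitesimal metrics on $B$.} Since $\strip^M_R+a\subset\Omega\subset\Omega_{a,b,R}$, monotonicity of the hyperbolic metric gives $\kappa_{\Omega_{a,b,R}}(z;v)\le\kappa_\Omega(z;v)\le\kappa_{\strip^M_R+a}(z;v)$ for every $z\in\strip^M_R+a$ and $v\in\C$. On the box $B$ one has $\Im\zeta>M+RE$, so Proposition~\ref{Prop:estim-strip-var2} (applied with $E$, producing a constant $c'=c'(E)$) gives $\kappa_{\strip_R+a}\le\kappa_{\strip^M_R+a}\le c'(E)\,\kappa_{\strip_R+a}$ on $B$; and one also has $\Im\zeta<b-RD$ with $D\ge D(c)$, so Proposition~\ref{Prop:local-strip-bound} gives $\kappa_{\Omega_{a,b,R}}\le\kappa_{\strip_R+a}\le c\,\kappa_{\Omega_{a,b,R}}$ on $B$. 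Chaining these inequalities yields
\[
\frac{1}{c}\,\kappa_{\strip_R+a}(z;v)\le\kappa_\Omega(z;v)\le c'(E)\,\kappa_{\strip_R+a}(z;v),\qquad z\in B,\ v\in\C.
\]
Integrating along any piecewise $C^1$ curve whose image lies in $B$ gives $\ell_\Omega\le c'(E)\,\ell_{\strip_R+a}$ and $\ell_{\strip_R+a}\le c\,\ell_\Omega$ for such curves. I then set $C:=\max\{c,c'(E)\}$ and $\epsilon:=\delta(c\,c'(E),0)$, where $\delta(\cdot,\cdot)$ is the function from Theorem~\ref{Gromov}; both depend only on $c$ and $E$ (hence on $c,D,E$).

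\emph{Step 2: comparison of the distances on $B$.} Fix $z,w\in B$. For the upper bound, let $\eta$ be the geodesic of $\strip_R+a$ joining $z$ and $w$; by Proposition~\ref{Prop:strip}(7) its image lies in the horizontal band $\{\min(\Im z,\Im w)\le\Im\zeta\le\max(\Im z,\Im w)\}$, hence in $B$, so $k_\Omega(z,w)\le\ell_\Omega(\eta)\le c'(E)\,\ell_{\strip_R+a}(\eta)=c'(E)\,k_{\strip_R+a}(z,w)\le C\,k_{\strip_R+a}(z,w)$. For the lower bound, using $\Omega\subset\Omega_{a,b,R}$ and then Proposition~\ref{Prop:local-strip-bound} (valid since $\Im z,\Im w\le b-RD$), one gets $k_\Omega(z,w)\ge k_{\Omega_{a,b,R}}(z,w)\ge\frac{1}{c}\,k_{\strip_R+a}(z,w)\ge\frac{1}{C}\,k_{\strip_R+a}(z,w)$. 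In particular $k_{\strip_R+a}(z,w)\le c\,k_\Omega(z,w)$ for all $z,w\in B$, which is used below.

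\emph{Step 3: the shadowing estimates.} If $\gamma:[u_0,u_1]\to\Omega$ is a geodesic of $\Omega$ with image in $B$, then for $u_0\le s\le t\le u_1$, since $\gamma|_{[s,t]}$ has image in $B$,
\[
\ell_{\strip_R+a}(\gamma;[s,t])\le c\,\ell_\Omega(\gamma;[s,t])=c\,k_\Omega(\gamma(s),\gamma(t))\le c\,c'(E)\,k_{\strip_R+a}(\gamma(s),\gamma(t)),
\]
using Step~1 and then the upper distance bound of Step~2; thus $\gamma$ is a $(c\,c'(E),0)$-quasi-geodesic in $\strip_R+a$. By Theorem~\ref{Gromov} there is a geodesic of $\strip_R+a$ with the same endpoints which $\delta(c\,c'(E),0)$-shadows $\gamma$ and vice versa; by essential uniqueness of geodesics (Proposition~\ref{Prop:geodesic-in-simply}(3)) its image coincides with that of $\eta$, and \eqref{Eq:box-strip-Om} follows with the chosen $\epsilon$. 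Symmetrically, if $\eta:[v_0,v_1]\to\strip_R+a$ is a geodesic of $\strip_R+a$ with image in $B$, then for $v_0\le s\le t\le v_1$ one has $\ell_\Omega(\eta;[s,t])\le c'(E)\,k_{\strip_R+a}(\eta(s),\eta(t))\le c\,c'(E)\,k_\Omega(\eta(s),\eta(t))$ (Step~1 and the last inequality of Step~2), so $\eta$ is a $(c\,c'(E),0)$-quasi-geodesic in $\Omega$, and Theorem~\ref{Gromov} together with uniqueness of the $\Omega$-geodesic joining the endpoints of $\eta$ (which is $\gamma$) gives \eqref{Eq:box-Om-strip} with the same $\epsilon$.

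\emph{Main difficulty.} The only genuine subtlety is that a geodesic of $\Omega$ joining two points of $B$ need not remain inside $B$, so the ``free'' inequalities obtained by integrating the metric are available only for curves known a priori to lie in $B$ (which, on the strip side, is guaranteed by Proposition~\ref{Prop:strip}(7)). Every estimate involving $\Omega$-geodesics must therefore be routed either through the auxiliary domain $\Omega_{a,b,R}$ (for the distance comparison) or through Gromov's stability of quasi-geodesics (for the shadowing statements). The remainder is bookkeeping of the inclusions $\strip^M_R+a\subset\Omega\subset\Omega_{a,b,R}$ and of which of the two localization constants is in force on $B$.
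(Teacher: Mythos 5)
Your proposal is correct and follows essentially the same route as the paper: chain the two localization results (Proposition \ref{Prop:local-strip-bound} below the level $b-RD$ and Proposition \ref{Prop:estim-strip-var2} above $M+RE$) through the inclusions $\strip^M_R+a\subset\Omega\subset\Omega_{a,b,R}$ to show that geodesics of either domain lying in $B$ are $(cc',0)$-quasi-geodesics of the other, then apply Theorem \ref{Gromov}, with the distance comparison obtained from the same sandwich and $C=\max\{c,c'\}$. The only cosmetic deviation is that you re-derive the upper distance bound via Proposition \ref{Prop:strip}(7) and integration, where the paper simply invokes the distance half of Proposition \ref{Prop:estim-strip-var2}; this changes nothing of substance.
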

\begin{proof} Let $\Omega\subset\C$ be a simply connected domain which satisfies condition (1)---(3). Condition (3) implies that $M+RE<M+RD<b-RD$, therefore, $B\neq\emptyset$. Let  $\gamma:[u_0,u_1]\to B$ be a geodesic for $\Omega$.  Then for every $u_0\leq s\leq t\leq u_1$,  by Proposition \ref{Prop:local-strip-bound}  and by (1),
\begin{equation*}
\begin{split}
\ell_{\strip_R+a}(\gamma;[s,t])&=\int_s^t \kappa_{\strip_R+a}(\gamma(r);\gamma'(r))dr \leq c \int_s^t \kappa_{\Omega_{a,b,R}}(\gamma(r);\gamma'(r))dr \\&\leq c \int_s^t \kappa_{\Omega}(\gamma(r);\gamma'(r))dt=c k_\Omega(\gamma(s), \gamma(t)).
\end{split}
\end{equation*}
On the other hand, by (2)  and Proposition \ref{Prop:estim-strip-var2},
\[
k_\Omega(\gamma(s), \gamma(t))\leq k_{\strip_R^M+a}(\gamma(s), \gamma(t))\leq c' k_{\strip_R+a}(\gamma(s), \gamma(t)).
\]
Hence, $\ell_{\strip_R+a}(\gamma;[s,t])\leq cc' k_{\strip_R+a}(\gamma(s), \gamma(t))$. This proves that every geodesic $\gamma$ for $\Omega$ which is contained in $B$ is a $(cc',0)$-quasi-geodesic in $\strip_R+a$. Hence, by Theorem~\ref{Gromov}, there exists $\epsilon=\epsilon(c,c')=\epsilon(c,D,E)>0$ such that \eqref{Eq:box-strip-Om} holds.

In order to prove \eqref{Eq:box-Om-strip},  we argue similarly. Given $\eta:[v_0,v_1]\to B$  a geodesic for $\strip_{R}+a$, for all $v_0<s<t<v_1$, using  Proposition  \ref{Prop:local-strip-bound} and Proposition \ref{Prop:estim-strip-var2}, we have
\begin{equation*}
\begin{split}
\ell_{\Omega}(\eta;[s,t])&=\int_s^t \kappa_{\Omega}(\eta(r);\eta'(r))dr \leq  \int_s^t \kappa_{\strip_R^M+a}(\eta(r);\eta'(r))dr \\&\leq c' \int_s^t \kappa_{\strip_R+a}(\eta(r);\eta'(r))dt=c' k_{\strip_R+a}(\eta(s), \eta(t))\\
&\leq c'ck_{\Omega_{a,b,R}}(\eta(s),\eta(t))\leq c'ck_{\Omega}(\eta(s),\eta(t)).
\end{split}
\end{equation*}
Hence $\eta$ is a $(cc',0)$-quasi-geodesic in $\Omega$. Theorem \ref{Gromov} implies \eqref{Eq:box-Om-strip} with the same $\epsilon$ as before.

In order to prove the last inequalities, we note that $\strip^M_R+a\subset \Omega\subset \Omega_{a,b,R}$, hence,  for every $z,w\in B$,
\[
k_{ \Omega_{a,b,R}}(z,w)\leq k_\Omega(z,w)\leq k_{\strip^M_R+a}(z,w).
\]
Therefore, the result follows at once from Proposition  \ref{Prop:local-strip-bound} and Proposition \ref{Prop:estim-strip-var2} by taking $C=\max\{c,c'\}$.
\end{proof}

\begin{definition}
The set $B$ defined in \eqref{Eq:good-box-def} is a {\sl good box for $\Omega$ for the data $(c,D,E)$}\index{Good box for a simply connected domain}. Its {\sl width}\index{Width of a good box} is $R$ and its {\sl height}\index{Height of a good box} is $b-M-R(D+E)$. The segment $\{z=a+\frac{R}{2}+it, M+RE < t < b-RD\}$ is called the {\sl vertical bisectrix}\index{Vertical bisectrix of a good box} of $B$ and we denote it by $\hbox{bis}(B)$.
\end{definition}

With the help of the previous results, we prove now that ``long'' geodesics for $\Omega$ in a good box get close to the vertical bisectrix of the good box in a controlled way.

\begin{corollary}\label{cor:good-box-estim}
Let $c>1$ and let $D\geq D(c)$, where $D(c)>0$ is  given by Proposition \ref{Prop:local-strip-bound} and fix $E\in(0, D)$.   Let $\epsilon>0$  and $C>1$ be given by Proposition \ref{good box} and let $N_0> \frac{4(C+1)\epsilon}{\pi}$.
Let $\delta>0$ and let $N>N_0>0$ be the constant given by Proposition \ref{Prop:strip}(6). Finally,  let  $N_1:=N_0-\frac{4\epsilon}{\pi}$ and $N_2:=\frac{N_0}{2}-\frac{ 2(C+1)\epsilon}{\pi}$.

Let $\Omega\subset \C$  be a simply connected domain and assume $B\subset \Omega$ is a good box of $\Omega$ for the data $(c,D,E)$ of width $R>0$, height $h>0$ and vertical bisectrix $\hbox{bis}(B)=\{z=a+\frac{R}{2}+it, r_0 < t < r_0+h\}$, where $a, r_0\in \R$. Suppose $h>NR$. Let $I\subset \R$ be an open interval and let $\gamma: I\to \Omega$ be a geodesic of $\Omega$. Suppose there exists an interval
$[u_0,u_1]\subset I$ such that $\gamma([u_0,u_1])\subset B$ and
 $\Im \gamma(u_1)-\Im \gamma(u_0)>NR$. Then, there exists $r_1\in (r_0, r_0+h-R(N_1+\frac{2\epsilon}{\pi}))$  such that for every $t\in [u_0,u_1]$ for which $r_1<\Im \gamma(t)<r_1+RN_1$,
\begin{equation}\label{Eq:metr-2}
\gamma(t)\in \{z\in B: |\Re z-a-\frac{R}{2}|< \frac{R}{2}(1-e^{-2(\epsilon+\delta)})\}.
\end{equation}
Moreover, let $u_0':=\min \{t\in [u_0,u_1]: \Im \gamma(t)\geq r_1\}$ and $u_1':=\max \{t\in [u_0,u_1]: \Im \gamma(t)\leq r_1+RN_1\}$. Then for every $t\in I\setminus [u'_0,u'_1]$ it follows that $\gamma(t)\not\in \{z\in B: r_1+(\frac{N_1}{2}-N_2)R<\Im z<r_1+(\frac{N_1}{2}+N_2)R\}$.
\end{corollary}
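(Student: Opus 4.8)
The plan is to transport everything to the strip $\strip_R+a$ via Proposition~\ref{good box} and then exploit the explicit geometry of the strip recorded in Proposition~\ref{Prop:strip}.

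\emph{Setting up the bottleneck level.} Since $\Im\gamma(u_1)-\Im\gamma(u_0)>NR$, pick $M_1,M_2\in\R$ with $\Im\gamma(u_0)<M_1<M_2<\Im\gamma(u_1)$ and $M_2-M_1>NR$. The arc $\gamma|_{[u_0,u_1]}$ is a geodesic of $\Omega$ contained in the good box $B$, so by Proposition~\ref{good box} it is $\epsilon$-shadowed, in the metric $k_{\strip_R+a}$, by the geodesic $\eta$ of $\strip_R+a$ joining $\gamma(u_0)$ and $\gamma(u_1)$. Applying Proposition~\ref{Prop:strip}(6) to $\eta$ with these $M_1,M_2$ yields a level $q\in(M_1,M_2-RN_0)$ with $\eta\cap\{\,q<\Im\zeta<q+RN_0\,\}\subset S_{\strip_R+a}(\gamma_0,\delta)$, where $\gamma_0$ is the central geodesic $t\mapsto a+\tfrac R2+it$. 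Put $r_1:=q+\tfrac{2\epsilon R}{\pi}$. Using that $\gamma([u_0,u_1])\subset B$ forces $r_0<\Im\gamma(u_0)$ and $\Im\gamma(u_1)<r_0+h$, together with $r_0=M+RE$, $r_0+h=b-RD$ and the identity $RN_0=RN_1+\tfrac{4\epsilon R}{\pi}$, one checks $r_1\in\bigl(r_0,\,r_0+h-R(N_1+\tfrac{2\epsilon}{\pi})\bigr)$, as well as $q+RN_0=r_1+RN_1+\tfrac{2\epsilon R}{\pi}<\Im\gamma(u_1)$ and $\Im\gamma(u_0)<q<r_1$.

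\emph{Proof of \eqref{Eq:metr-2}.} The elementary fact we need is that in $\strip_R+a$ the imaginary part is $\tfrac{2R}{\pi}$-Lipschitz for $k_{\strip_R+a}$: by Proposition~\ref{Prop:strip}(5), $Q(M_1,M_2)=\tfrac{\pi(M_2-M_1)}{2R}$, whence $|\Im z-\Im w|\le\tfrac{2R}{\pi}\,k_{\strip_R+a}(z,w)$. Hence, if $t\in[u_0,u_1]$ and $r_1<\Im\gamma(t)<r_1+RN_1$, then for any $\eta(v)$ with $k_{\strip_R+a}(\gamma(t),\eta(v))<\epsilon$ one gets $q<\Im\eta(v)<q+RN_0$ (using $r_1=q+\tfrac{2\epsilon R}{\pi}$ and $N_1=N_0-\tfrac{4\epsilon}{\pi}$), so $\eta(v)\in S_{\strip_R+a}(\gamma_0,\delta)$; by the triangle inequality $k_{\strip_R+a}(\gamma(t),\gamma_0)<\epsilon+\delta$, i.e. $\gamma(t)\in S_{\strip_R+a}(\gamma_0,\epsilon+\delta)$, and Proposition~\ref{Prop:strip}(4) gives $|\Re\gamma(t)-a-\tfrac R2|<\tfrac R2\bigl(1-e^{-2(\epsilon+\delta)}\bigr)$.

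\emph{Proof of the last assertion.} Write $J:=\{z\in B:\ r_1+(\tfrac{N_1}{2}-N_2)R<\Im z<r_1+(\tfrac{N_1}{2}+N_2)R\}$. The hypothesis $N_0>\tfrac{4(C+1)\epsilon}{\pi}$ is used exactly here: it gives $0<N_2<\tfrac{N_1}{2}$, so the height band of $J$ lies strictly inside $(r_1,r_1+RN_1)$. From $\Im\gamma(u_0)<r_1<r_1+RN_1<\Im\gamma(u_1)$ and the intermediate value theorem, $u_0',u_1'$ are well defined with $u_0\le u_0'\le u_1'\le u_1$, and $\Im\gamma(s)<r_1$ for $s\in[u_0,u_0')$ while $\Im\gamma(s)>r_1+RN_1$ for $s\in(u_1',u_1]$; in particular, for $t\in[u_0,u_1]\setminus[u_0',u_1']$ we have $\Im\gamma(t)\notin(r_1,r_1+RN_1)$, so $\gamma(t)\notin J$. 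Now let $t\in I$ with $t>u_1$ (the case $t<u_0$ being symmetric) and suppose, for contradiction, $\gamma(t)\in J$. Since $\Im\gamma(t)\in(r_1,r_1+RN_1)\subset(\Im\gamma(u_0),\Im\gamma(u_1))$, the intermediate value theorem gives $s_0\in[u_0',u_1']$ with $\Im\gamma(s_0)=\Im\gamma(t)=:H$, and by \eqref{Eq:metr-2} $\gamma(s_0)\in S_{\strip_R+a}(\gamma_0,\epsilon+\delta)$; moreover $s_0\le u_1<t$, so $\gamma|_{[s_0,t]}$ is a nondegenerate geodesic of $\Omega$ passing through $\gamma(u_1)$, with $\gamma([s_0,u_1])\subset B$. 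If $\gamma([s_0,t])\subset B$, then by Proposition~\ref{good box} the arc $\gamma|_{[s_0,t]}$ is $\epsilon$-shadowed in $k_{\strip_R+a}$ by the geodesic of $\strip_R+a$ joining $\gamma(s_0)$ and $\gamma(t)$, which, both endpoints having imaginary part $H$, is the arc of $\{\Im\zeta=H\}$ between them (Proposition~\ref{Prop:strip}(3)); by the Lipschitz property of $\Im$ this forces $\Im\gamma(u_1)\le H+\tfrac{2\epsilon R}{\pi}<r_1+RN_1+\tfrac{2\epsilon R}{\pi}=q+RN_0<\Im\gamma(u_1)$, absurd. Otherwise $\gamma|_{[s_0,t]}$ leaves $B$ at a first time $\tau>u_1$, and its exit point $\gamma(\tau)$ lies on $\partial B\cap\Omega$; since the two vertical sides of $B$ lie on the slits $\{\Re\zeta\in\{a,a+R\},\ \Im\zeta\le b\}$ of $\Omega_{a,b,R}\supset\Omega$, necessarily $\Im\gamma(\tau)\in\{r_0,\,r_0+h\}$. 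If $\Im\gamma(\tau)=r_0$, the strip geodesic shadowing $\gamma|_{[s_0,\tau]}$ has imaginary part $\le\max\{H,r_0\}=H$ by Proposition~\ref{Prop:strip}(7), and $\gamma(u_1)\in\gamma([s_0,\tau])$ again gives $\Im\gamma(u_1)\le H+\tfrac{2\epsilon R}{\pi}<q+RN_0$, absurd. This leaves the case $\Im\gamma(\tau)=r_0+h=b-RD$: here $\gamma|_{[s_0,t]}$ runs up to the top level $b-RD$ and later descends back to the slab. This is the delicate case, and the step I expect to cost the most effort: to exclude it one uses the geodesic property of $\gamma|_{[s_0,t]}$ (additivity of hyperbolic length through the exit and re-entry points on $\{\Im\zeta=b-RD\}$) together with the full force of Proposition~\ref{good box} — the bi-Lipschitz comparison $\tfrac1C k_{\strip_R+a}\le k_\Omega\le C\,k_{\strip_R+a}$ on $B$ — and the lower bound $k_{\Omega_{a,b,R}}\ge\tfrac1c\,k_{\strip_R+a}$ below level $b-RD$ from Proposition~\ref{Prop:local-strip-bound}, plus the monotonicity of strip geodesics in the imaginary part (Proposition~\ref{Prop:strip}(7)), to show that such a detour up to level $b-RD$ and back costs strictly more than $k_\Omega(\gamma(s_0),\gamma(t))$ permits; the quantitative margin is precisely the $\tfrac{2(C+1)\epsilon R}{\pi}$ of extra height separation $(b-RD)-H>\tfrac{2(C+1)\epsilon R}{\pi}$ that was built into the definition of $N_2$. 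The difficulty of this last case is that outside $[u_0,u_1]$ the curve $\gamma$ may leave the strip $\strip_R+a$ altogether, so one must keep careful track of all its returns to $B$ through $\partial B\cap\Omega$.
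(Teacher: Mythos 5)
Your construction of $r_1$ and your proof of \eqref{Eq:metr-2} are correct and essentially identical to the paper's argument (the paper phrases the step ``$q<\Im\eta(v_u)<q+RN_0$'' via Proposition \ref{Prop:strip}(5) exactly as your Lipschitz bound $|\Im z-\Im w|\le\tfrac{2R}{\pi}k_{\strip_R+a}(z,w)$ does), and the same is true for the easy observation that $\gamma(t)\notin J$ when $t\in[u_0,u_1]\setminus[u_0',u_1']$. The problem is the final assertion for $t\in I\setminus[u_0,u_1]$. There you shadow the $\Omega$-geodesic $\gamma|_{[s_0,t]}$ in the \emph{strip} metric via \eqref{Eq:box-strip-Om}, which forces you to assume $\gamma([s_0,t])\subset B$, and hence to analyze the exits of $\gamma$ through $\partial B$. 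You dispose of the bottom exit, but the top-exit case ($\Im\gamma(\tau)=r_0+h$) is left as an acknowledged sketch, and the plan you outline does not obviously close it: you propose to show that the detour up to level $b-RD$ and back ``costs more than $k_\Omega(\gamma(s_0),\gamma(t))$ permits'', but the natural upper bound for $k_\Omega(\gamma(s_0),\gamma(t))$ is the length of the horizontal strip segment at height $H$, and since $\gamma(t)\in J$ may lie arbitrarily close to a vertical side of the box, $k_{\strip_R+a}(\gamma(s_0),\gamma(t))$ is \emph{not} bounded in terms of $c,C,\epsilon,\delta$ (Proposition \ref{Prop:strip}(3) blows up near the edges), so the comparison ``detour cost $\ge \tfrac{2(C+1)\epsilon}{c}$ versus travel cost'' gives no contradiction as stated. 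As written, this case is a genuine gap.

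The gap disappears if you use the \emph{other} shadowing statement of Proposition \ref{good box}, namely \eqref{Eq:box-Om-strip}, which only requires the \emph{strip} geodesic to lie in $B$ and gives $\epsilon$-closeness in $k_\Omega$; this is what the paper does. Concretely: let $\xi$ be the geodesic of $\strip_R+a$ joining $\gamma(u_0')$ (height $r_1$) and $\gamma(t)$ (height $<r_1+(\tfrac{N_1}{2}+N_2)R$); by Proposition \ref{Prop:strip}(7) it stays in that height band, hence in $B$, no matter what $\gamma|_{[u_0',t]}$ does. Then \eqref{Eq:box-Om-strip} gives a point $\xi(s)$ with $k_\Omega(\xi(s),\gamma(u_1'))<\epsilon$; since both points lie in $B$, the bi-Lipschitz comparison of Proposition \ref{good box} yields $k_{\strip_R+a}(\xi(s),\gamma(u_1'))<C\epsilon$, while Proposition \ref{Prop:strip}(5) and $\Im\gamma(u_1')=r_1+RN_1$, $\Im\xi(s)<r_1+(\tfrac{N_1}{2}+N_2)R$ force $k_{\strip_R+a}(\xi(s),\gamma(u_1'))>\tfrac{\pi}{2}(\tfrac{N_1}{2}-N_2)=C\epsilon$, a contradiction. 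This removes the whole case analysis on where $\gamma|_{[s_0,t]}$ exits $B$ (which, as you note, is delicate precisely because outside $[u_0,u_1]$ the geodesic may leave the strip region entirely); I recommend you replace your third step by this argument.
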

\begin{proof}
Let $\gamma: I\to \Omega$ be a geodesic of $\Omega$. Suppose there exists an interval
$[u_0,u_1]\subset I$ such that $\gamma([u_0,u_1])\subset B$ and
 $\Im \gamma(u_1)-\Im \gamma(u_0)>NR$. Let $\eta:[v_0,v_1]\to \strip_R+a$ be the geodesic of $\strip_R+a$ such that $\eta(v_j)=\gamma(u_j)$, $j=0,1$.

Note that $\eta([v_0,v_1])\subset B$ by Proposition \ref{Prop:strip}(7). Moreover, since $\Im \eta(v_1)-\Im \eta(v_0)>NR$, by Proposition \ref{Prop:strip}(6), there exists  $q\in (\Im \eta(v_0),\Im \eta(v_1)-RN_0)$ such that $\eta(v)\in   S_{\strip_R+a}(\gamma_0, \delta)$ for all $v\in [v_0,v_1]$ for which $q<\Im \eta(v)<q+RN_0$. By Proposition \ref{Prop:strip}(4), in fact, $k_{\strip_R+a} (a+\frac{R}{2}+i\Im \eta(v), \eta(v))=k_{\strip_R+a}(\gamma_0, \eta(v))<\delta$.

Let
\[
r_1:=q+\frac{2\epsilon R}{\pi}.
\]
Note that
\[
r_0<q<r_1=q+\frac{2\epsilon R}{\pi}<r_0+h-RN_0+\frac{2\epsilon R}{\pi}=r_0+h-R(N_1+\frac{2\epsilon}{\pi}).
\]
 and $r_1+RN_1<q+RN_0$. Let
\[
B^G:=\{z\in \strip_R+a: r_1<\Im z<r_1+RN_1\}.
\]
By Proposition \ref{good box},  for every $u\in [u_0,u_1]$,
there exists $v_u\in [v_0,v_1]$ such that $k_{\strip_R+a}(\eta(v_u), \gamma(u))< \epsilon$.

Let $u\in [u_0,u_1]$ be such that $\gamma(u)\in B^G$. We claim that $q<\Im \eta(v_u)<q+RN_0$. Indeed, from Proposition \ref{Prop:strip}(5), one sees that for every $z\in \strip_R+a$ such that $\Im z\geq q+RN_0$ or $\Im z\leq q$, the hyperbolic distance in $\strip_R+a$ of $z$ from $B^G$ is at least $\epsilon$. For instance, if $\Im z\leq q$, then
\begin{equation*}
\begin{split}
k_{\strip_R+a}(z, B^G)&\geq k_{\strip_R+a}(a+\frac{R}{2}+i\Im z, a+\frac{R}{2}+i\Im r_1)\\&\geq k_{\strip_R+a}(a+\frac{R}{2}+i\Im q, a+\frac{R}{2}+i\Im r_1)=\epsilon,
\end{split}
\end{equation*}
where the last equality follows from a direct computation using Proposition \ref{Prop:strip}(1).

The claim we have just proved implies that $k_{\strip_R+a}(\eta(v_u), a+\frac{R}{2}+i\Im \eta(v_u))<\delta$, hence, by the triangle inequality,
\begin{equation*}
\begin{split}
 k_{\strip_R+a}(\gamma(u), a+\frac{R}{2}+i\Im \eta(v_u))&\leq k_{\strip_R+a}(\gamma(u), \eta(v_u))\\&\quad +k_{\strip_R+a}(\eta(v_u), a+\frac{R}{2}+i\Im \eta(v_u))<\epsilon+\delta.
\end{split}
\end{equation*}
Then \eqref{Eq:metr-2} follows from Proposition \ref{Prop:strip}(4).

In order to prove the last statement, suppose there exists $t\in I\setminus [u'_0,u'_1]$ such that
\[
\gamma(t)\in B_1:=\{z\in B: r_1+(\frac{N_1}{2}-N_2)R<\Im z<r_1+(\frac{N_1}{2}+N_2)R\}.
\]
 We assume that $t> u_1'$ (the  case $t<u_0'$ is similar). Note that, by definition, $\Im \gamma(u_0')=r_1$, $\Im \gamma(u_1')=r_1+RN_1$. Let $\xi:[0,1]\to \strip_R+a$ be the geodesic of $\strip_R+a$ such that $\xi(0)=\gamma(u_0')$ and $\xi(1)=\gamma(t)$. By Proposition \ref{Prop:strip}(7), for all $s\in [0,1]$,
\begin{equation}\label{Eq:xi-stima-alto}
r_1\leq \Im \xi(s)\leq \Im \gamma(t)<r_1+(\frac{N_1}{2}+N_2)R.
\end{equation}
Since $\gamma:[s_0, t]\to \Omega$ is the geodesic of $\Omega$ which joins $\xi(0)$ with $\xi(1)$, by \eqref{Eq:box-Om-strip}, there exists $s\in [0,1]$ such that $k_\Omega(\xi(s), \gamma(u'_1))<\epsilon$. Hence, by Proposition \ref{good box},
$k_{\strip_R+a}(\xi(s), \gamma(u'_1))<C\epsilon$. On the other hand, by Proposition \ref{Prop:strip}(5) and \eqref{Eq:xi-stima-alto}
\begin{equation*}
\begin{split}
k_{\strip_R+a}(\xi(s), \gamma(u'_1))&\geq k_{\strip_R+a}(a+\frac{R}{2}+i\Im\xi(s), a+\frac{R}{2}+i(r_1+RN_1))\\&=\frac{\pi}{2R}(r_1+RN_1-\Im \xi(s))>\frac{\pi}{2}(\frac{N_1}{2}-N_2)=C\epsilon,
\end{split}
\end{equation*}
a contradiction, and the proof is concluded.
\end{proof}

\section{Trajectories oscillating  to the Denjoy-Wolff point}\label{Traj}

Using the tools developed in the previous sections, we can construct examples of different slope behavior for semigroups.

\begin{proposition}\label{Prop:example-non-tg-osc}
There exists a parabolic semigroup $(\phi_t)$ in $\D$ with zero hyperbolic step such that $\phi_t(z)$ converges non-tangentially to its Denjoy-Wolff point $\tau\in\partial\D$ but $\lim_{t\to+\infty}\Arg (1-\overline{\tau}\phi_t(0))$ does not exist.
\end{proposition}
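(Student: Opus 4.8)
The plan is to realize the semigroup through its canonical model. By the characterization recalled in the introduction it suffices to produce a simply connected domain $\Omega\subsetneq\C$ which is starlike at infinity and satisfies: for every $w\in\C$ there is $t_0\ge0$ with $w+it_0\in\Omega$. Then the Riemann map $h\colon\D\to\Omega$ is the Koenigs function of the parabolic zero‑hyperbolic‑step semigroup $\phi_t(z):=h^{-1}(h(z)+it)$, whose Denjoy--Wolff point $\tau$ corresponds under $h$ to the prime end of $\Omega$ at $+i\infty$. I want $\Omega$ to have two features: (a) it contains a vertical Euclidean sector, so that Corollary~\ref{non-tg-intro} gives non‑tangential convergence of every trajectory to $\tau$, whence the slope is a compact connected subset of $(-\pi/2,\pi/2)$; and (b) it contains a tower of good boxes $B_k$ whose vertical bisectrices lie alternately (in $k$) a fixed proportion of the width to the left and to the right of the line $\{\Re w=0\}$, while the trajectory $\ell(t):=h(0)+it$ --- which I arrange to be a piece of the positive imaginary axis by taking $h(0)=iY_0$ --- meets each $B_k$ at a fixed relative horizontal position, bounded away from the centre and from the walls.

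For the construction I fix first $c_0>1$, $D\ge D(c_0)$ (Proposition~\ref{Prop:local-strip-bound}), $E\in(0,D)$, then $\epsilon,C$ of Proposition~\ref{good box}, $N_0>\frac{4(C+1)\epsilon}{\pi}$, $\delta>0$, and $N=N(\delta,N_0)$ of Proposition~\ref{Prop:strip}(6). Then I choose $p^\ast\in\bigl(1-e^{-2(\epsilon+\delta)},1\bigr)$, a sufficiently large ratio $\Lambda$, and set $w_k:=\Lambda^k$, $\mu_k:=(-1)^kp^\ast w_k$, $R_k:=2w_k$, $V_k:=(\mu_k-w_k,\mu_k+w_k)$; for $\Lambda$ large these are strictly nested open intervals $V_1\subset V_2\subset\cdots$ with $\bigcup_kV_k=\R$ and $0\in V_k$ for all $k$. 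I also pick heights $Y_k:=L\Lambda^k$ with $L$ so large that $Y_k-Y_{k-1}-2R_k(D+E)>NR_k$. Letting $k(x):=\min\{j\ge1:x\in V_j\}$ and $c(x):=Y_{k(x)-1}$, I put $\Omega:=\{x+iy:y>c(x)\}$. One then checks routinely: $c$ is upper semicontinuous, $c<+\infty$ everywhere; $\Omega$ is open, connected, simply connected (it is vertically star‑shaped and, by the last point below, eventually contains a convex sector, so loops contract after being pushed up) and starlike at infinity; the horizontal slice of $\Omega$ at height $y\in(Y_{k-1},Y_k)$ equals exactly $V_k$; each $B_k:=\{x\in V_k,\ Y_{k-1}+R_kE<y<Y_k-R_kD\}$ is a good box for $\Omega$ for the data $(c_0,D,E)$, of width $R_k$, of height $>NR_k$, with vertical bisectrix on $\{\Re w=\mu_k\}$; and since $c(x)\le\frac{2L}{\Lambda}|x|+C'$ for suitable $C'$, $\Omega$ contains a vertical sector, giving (a).

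Granting this, non‑tangentiality is (a). For (b), since $h$ is an isometry it suffices to show that $\ell(t)=i(Y_0+t)$ lies, for unbounded sequences of $t$'s, alternately on the two sides of the geodesic $\eta$ of $\Omega$ from $h(0)$ to the prime end at $+i\infty$, at $k_\Omega$‑distance bounded above and below by positive constants independent of $k$. The upper bound is automatic: the vertical sector makes $\ell$ a $(C_1,0)$‑quasi‑geodesic of $\Omega$ (as in the proof of Corollary~\ref{non-tg-intro}/Proposition~\ref{Prop:suff-qc-cont}), hence $\ell$ shadows $\eta$. For the rest: $\eta$ crosses every sufficiently high band $(Y_{k-1},Y_k)$ and so traverses $B_k$ by height $>NR_k$; by Corollary~\ref{cor:good-box-estim} it then runs, on a whole sub‑band of $B_k$, within horizontal distance $\frac{R_k}{2}\bigl(1-e^{-2(\epsilon+\delta)}\bigr)$ of $\{\Re w=\mu_k\}$, and does not return to that sub‑band's heights outside a single parameter interval. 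Meanwhile $\ell(t)$ lies on $\{\Re w=0\}$, at horizontal distance $|\mu_k|=\frac{p^\ast}{2}R_k$ from $\{\Re w=\mu_k\}$; since $p^\ast>1-e^{-2(\epsilon+\delta)}$, for every $t$ with $\Im\ell(t)$ in that sub‑band the point $\ell(t)$ is strictly to the right of $\eta$ when $k$ is odd and strictly to the left when $k$ is even, the horizontal gap being a fixed proportion of $R_k$, bounded away from $0$ and from $R_k$. Transporting this through the two‑sided comparison between $k_{\strip_{R_k}}$ and $k_\Omega$ on $B_k$ (Proposition~\ref{good box}) and the explicit conformal model of the strip (together with Lemma~\ref{Lem:hyper-semipiano}(3)--(4)) gives $k_\Omega(\ell(t),\eta)\in[\rho_1,\rho_2]\subset(0,\infty)$, on the prescribed side, uniformly in $k$. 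Applying $h^{-1}$, $\phi_t(0)$ lies at $\omega$‑distance in $[\rho_1,\rho_2]$ from the geodesic ray $h^{-1}(\eta)$ of $\D$ ending at $\tau$, alternately on its two sides; after a Cayley transform sending $\tau$ to $\infty$, $h^{-1}(\eta)$ becomes a horizontal ray of $\Ha$, and a lower bound on the hyperbolic distance to it converts, via Lemma~\ref{Lem:hyper-semipiano}(4), into a lower bound on the opening angle. One concludes $\limsup_{t\to+\infty}\Arg(1-\overline{\tau}\phi_t(0))\ge\alpha>0>-\alpha\ge\liminf_{t\to+\infty}\Arg(1-\overline{\tau}\phi_t(0))$ for some $\alpha>0$, so $\lim_{t\to+\infty}\Arg(1-\overline{\tau}\phi_t(0))$ does not exist; since the slope is connected, it is an interval $[a,b]$ with $-\pi/2<a<b<\pi/2$, which also yields Proposition~\ref{main-para}.

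The hard part is the construction of $\Omega$. The good‑box hypotheses are stiff: a column cannot simultaneously be the interior of some $V_j$ (where $c$ must be low) and a wall of some good box (where $c$ must be high), which forces the chain $V_1\subset V_2\subset\cdots$ and hence permits the bisectrices to oscillate only within a fixed fraction of the necessarily growing widths. Reconciling this with (i) keeping $\ell$ inside $\Omega$ yet at bounded --- and, crucially, non‑vanishing --- hyperbolic distance from $\eta$, (ii) still containing a vertical sector, and (iii) making each $B_k$ long enough for Corollary~\ref{cor:good-box-estim} to apply, is exactly the role of the mutual constraints among $p^\ast$, $\Lambda$ and $L$, and is the delicate point. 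A secondary technical difficulty is the uniform‑in‑$k$ bookkeeping of the previous paragraph which turns ``definite side of $\eta$ at definite hyperbolic distance inside $\Omega$'' into the quantitative oscillation of $\Arg(1-\overline{\tau}\phi_t(0))$.
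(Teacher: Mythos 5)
Your construction is, up to renaming of parameters, the paper's own: a starlike-at-infinity union of nested, alternately off-centre half-strips containing a vertical sector (so non-tangential convergence comes from Proposition~\ref{sector-implies-convergnt}), with the bands set up so that each one is a good box whose bisectrix sits on alternating sides of the imaginary axis at a relative offset $p^\ast>1-e^{-2(\epsilon+\delta)}$, and with Corollary~\ref{cor:good-box-estim}, Proposition~\ref{good box} and the strip estimates of Proposition~\ref{Prop:strip} used to pin the geodesic $\eta$ to the central band of each box while the trajectory $\ell(t)$ stays on the axis. Up to that point the proposal is sound (the lower bound $k_\Omega(\ell(t),\eta)\ge\rho_1$ does still require the bookkeeping you only allude to, namely bounding the distance from $\ell(t)$ to the portions of $\eta$ at heights outside the sub-band window; this is exactly the paper's $W^{\pm}_{2j}$, $L^{+}_{2j}$ computation, and your cited tools do suffice for it).

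The genuine gap is the final step: you conclude $\limsup\Arg(1-\overline{\tau}\phi_t(0))\ge\alpha>0>-\alpha\ge\liminf\Arg(1-\overline{\tau}\phi_t(0))$ from the assertion that $\ell(t)$ lies at distance in $[\rho_1,\rho_2]$ from $\eta$ ``on the prescribed side''. The sign of $\Arg(1-\overline{\tau}\phi_t(0))$ is determined by which component of $\D$ minus the full diameter through $\tau$ contains $\phi_t(0)$, i.e.\ by which component of $\Omega\setminus h((-\tau,\tau))$ contains $\ell(t)$. Knowing that $\ell(t)$ lies to the right (or left) of the points of $\eta$ at comparable heights does not locate $\ell(t)$ with respect to that complete geodesic: the ray $\eta$ alone does not separate $\Omega$, the other half of $h((-\tau,\tau))$ is completely uncontrolled in your argument, and none of the results you invoke (Proposition~\ref{good box}, Lemma~\ref{Lem:hyper-semipiano}(3)--(4)) yield side information, only distances. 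So, as written, the non-existence of the limit is not established. The repair is available inside your own setup and is what the paper does: since the central bands $B_k^+$ lie on alternating sides of $\{\Re w=0\}$, the continuous curve $\eta$ must cross the imaginary axis at heights tending to $+\infty$; at those times $\phi_t(0)$ lies on the radius $[0,\tau)$, so $\Arg(1-\overline{\tau}\phi_t(0))=0$, while at your in-window times the lower bound $k_\Omega(\ell(t),\eta)\ge\rho_1$ gives $|\Arg(1-\overline{\tau}\phi_t(0))|\ge\alpha>0$ via Proposition~\ref{Prop:conv-nt-sc} and Lemma~\ref{Lem:hyper-semipiano}(4). These two families of times already prevent the limit from existing, with no side determination needed (note, however, that with this fix one no longer gets the signed oscillation you wanted for Proposition~\ref{main-para} without further work).
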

\begin{proof}
Let  $c>1$, let $D\geq D(c)$ where $D(c)$ is the constant given by Proposition \ref{Prop:local-strip-bound} and let $E\in(0, D)$.  Let $\epsilon>0$  and $C>1$ be given by Proposition \ref{good box} and let $N_0> \frac{4(C+1)\epsilon}{\pi}$.
Let $\delta>0$ and let $N>N_0>0$ be the constant given by Proposition \ref{Prop:strip}(6). Finally,  let  $N_1:=N_0-\frac{4\epsilon}{\pi}$ and $N_2:=\frac{N_0}{2}-\frac{ 2(C+1)\epsilon}{\pi}$.

 Let $\alpha_0\in (0,\pi/2)$ be such that
\begin{equation}\label{tan-alp-r}
 \tan \alpha_0\leq \min\{\frac{1}{8D}, \frac{1}{8N}\}.
\end{equation}

 Let $\chi:= 1-e^{-2(\epsilon+\delta)}<1$
and choose $\alpha_1\in (0,\alpha_0)$  such that
\begin{equation}\label{Eq:good-alpa1}
\tan \alpha_1<(1-\chi) \tan \alpha_0.
\end{equation}

In order to construct the example, we will define a domain $\Omega\subset \C$ starlike at infinity given by $\Omega=\bigcap_{j=1}^\infty \Omega_j$, where $\Omega_j:=\Omega_{-a_j, b_j,  R_j}$, where $\{b_j\}$ is an increasing  sequence of positive real numbers converging to infinity with the property that $b_1=1$,
\begin{equation}\label{Eq:choice-b}
b_{j}>b_{j-1}\max\left\{\frac{2\tan \alpha_0-\tan\alpha_1}{\tan\alpha_1}, \frac{\tan\alpha_1}{2\tan \alpha_0-\tan\alpha_1}, 4\right\}, \quad j=2,3,\ldots,
\end{equation}
and for $j=1,2,\ldots,$
\begin{equation}\label{Eq:good-choice-Ra}
\begin{split}
&R_j=2b_j\tan \alpha_0,  \\
&a_{2j} =b_{2j}\tan \alpha_1,\\
&a_{2j-1}= R_{2j-1}-b_{2j-1}\tan\alpha_1=b_{2j-1}(2\tan\alpha_0-\tan\alpha_1).
 \end{split}
 \end{equation}
Note that, by \eqref{Eq:choice-b} and \eqref{Eq:good-choice-Ra}, the sequences $\{a_{j}\}$ and $\{R_{j}-a_{j}\}$ are increasing.
In particular,
the domain $\Omega$ is starlike at infinity (see Figure \ref{fig:T}).
\begin{figure}[h]
\centering 
\begin{tikzpicture}
\draw [dotted] (4.3,-3) -- (4.3,0) node[scale=0.85]{$\bullet$};
\draw (3.5,0) node[above][scale=0.75]{$-a_{2j-1}+ib_{2j-1}$} ;
\draw [dotted] (5.2,-3) -- (5.2,0) node[scale=0.85]{$\bullet$};
\draw (6.7, 0) node[above][scale=0.75]{$-a_{2j-1}+ R_{2j-1}+ib_{2j-1}$};
\draw [dotted] (3.6,-3) -- (3.6,4) node[scale=0.85]{$\bullet$} node[above][scale=0.75]{$-a_{2j}+ib_{2j}$} ;
\draw [dotted] (9,-3) -- (9,4) node[scale=0.85]{$\bullet$} node[above][scale=0.75]{$-a_{2j}+ R_{2j}+ib_{2j}$};
\draw [dotted] (-3,-3) -- (-3,8.5) node[scale=0.85]{$\bullet$} node[above][scale=0.8]{$-a_{2j+1}+ib_{2j+1}$} ;
\draw [dotted] (10.5,-3) -- (10.5,8.5) node[scale=0.85]{$\bullet$} node[above][scale=0.75]{$-a_{2j+1}+ R_{2j+1}+ib_{2j+1}$};;
\draw [dashed] [->] (-3,-1.5) -- (11,-1.5);
\draw (5,-1.5) node[scale=0.85]{$\bullet$} ;
\draw (4.9, -1.7) node[scale=0.85][below, left]{$0$};


\draw [black,fill=gray!20] (3.6,1) -- (9,1) -- (9,3) -- (3.6,3) -- cycle;
\draw [black,fill=gray!40]  (5.5,1) -- (5.5,3) -- (7.1,3) -- (7.1,1) -- cycle;
\draw [black,fill=gray!20] (-3,5) -- (10.5,5) -- (10.5,7.5) -- (-3,7.5) -- cycle;
\draw [black,fill=gray!40]  (2.8,5) -- (2.8,7.5) -- (4.7,7.5) -- (4.7,5) -- cycle;
\draw [dashed] [->] (5,-2) -- (5,9) node[scale=0.85][left]{$it$};
\draw[->,>=latex] (10.8,4.5) to (9.8,5.5) ;
\draw (11.1,4.2) node[scale=0.85]{$B_{2j+1}$};
\draw[->,>=latex] (2.5,4.4) to (3.4,5.4) ;
\draw (2.2,4.3) node[scale=0.85]{$B_{2j+1}^+$};
\draw[->,>=latex] (7.7,0.8) to (6.9,1.4) ;
\draw (8,0.7) node[scale=0.85]{$B_{2j}^+$};
\draw (3.6, 2.8) -- (9,2.8) ;
\draw[->,>=latex](2.5, 2.2) -- (3.8,2.8);
\draw (2.2, 2.2) node[scale=0.85]{$L_{2j}^+$};
\draw (5,2.2) node{$\bullet$};
\draw (4.6,2.3) node[scale=0.85]{$iy_{2j}$} ;
\draw[->,>=latex] (2.8,1.4) to (3.9,1.4) ;
\draw (2.4,1.4) node[scale=0.85]{$B_{2j}$};
\draw (1.6,8.1) node[scale=0.85]{$\gamma([0,1))$};
\draw[->,>=latex] (2.3,8.1) to (3.4,8) ;
\draw (5,4.84) node[scale=0.85]{$\bullet$};
\draw (4.7,4.6) node[scale=0.85]{$it_{2j}$};
\draw (5,0.2) node[scale=0.85]{$\bullet$};
\draw (4.5,0.5) node[scale=0.85]{$it_{2j-1}$};

\let\pcoord\relax
\let\tcoord\relax
\foreach [count=\num] \coord in {
  (3.4,8.3),
  (3.7,5.8),
  (5.9,4.5),
  (5.9,3),
  (4.8,0)
  } {
  \ifx\pcoord\relax
   \global\let\pcoord\coord
   \path \pcoord coordinate (c1);
  \else
   \ifx\tcoord\relax
   \global\let\tcoord\coord
   \else
    \path \pcoord coordinate (p);
    \path \tcoord coordinate (t);
    \path \coord coordinate (n);
    \path ($(p)!.75!(n)$) coordinate (m);
    \path ($(t)!1cm!90:(m)$) coordinate (r);
    \path ($(t)-(p)$);
    \pgfgetlastxy{\xx}{\yy}
    \pgfmathsetmacro{\len}{.5*veclen(\xx,\yy)}
    \path ($(t)!(p)!(r)$) coordinate (rp);
    \path ($(t)!\len pt!(rp)$) coordinate (c2);
    \draw (p) .. controls (c1) and (c2) .. (t);
    \path ($(t)-(n)$);
    \pgfgetlastxy{\xx}{\yy}
    \pgfmathsetmacro{\len}{.5*veclen(\xx,\yy)}
    \path ($(t)!(n)!(r)$) coordinate (rn);
    \path ($(t)!\len pt!(rn)$) coordinate (c1);
    \global\let\pcoord\tcoord
    \global\let\tcoord\coord
   \fi
  \fi
}
\draw (t) .. controls (c1) and (n) .. (n);
\draw [dashed] (3.4,8.3) -- (3.2,9);
\draw [dashed] (4.8,0) -- (4.4,-0.5);

\end{tikzpicture}
\caption{}\label{fig:T}
\end{figure}

Moreover, the point $-a_{2j}+ib_{2j}$ belongs to $\{\zeta\in \C: \zeta=i\rho e^{\alpha_1 i}, \rho>0\}$, the boundary of $iV(\alpha_1,0)$ is contained in the left half-plane $\{z \in \C: \Re z<0\}$, while $R_{2j-1}-a_{2j-1}+ib_{2j-1}$ belongs to $\{\zeta\in \C: \zeta=i\rho e^{-\alpha_1 i}, \rho>0\}$ and the boundary of $iV(\alpha_1,0)$ is contained in the right half-plane $\{z \in \C: \Re z>0\}$. Since $(R_j-a_{j})/b_j\geq\tan\alpha_1$, this implies that  $iV(\alpha_1,+\infty)\subset \Omega$.

Hence, let  $h:\D \to \Omega$ be the Riemann map such that $h(0)=0$, $\lim_{t\to+\infty}h^{-1}(it)=1$.  The semigroup $(\phi_t)$, defined by $\phi_t(z):=h^{-1}(h(z)+it)$, has Koenigs function $h$, $\Omega=h(\D)$ and Denjoy-Wolff point $1$. Moreover, by Proposition \ref{sector-implies-convergnt}, $\phi_t(z)$ converges non-tangentially to its Denjoy-Wolff point.

Let $\tilde\gamma:[0,1)\to \D$, given by $\tilde\gamma(t)=t$, be the geodesic joining $0$ to $1$. Let $\gamma:=h\circ \tilde\gamma:[0,1)\to \Omega$. The curve $\gamma$ is a geodesic in $\Omega$, $\gamma(0)=0$, with the property that for every $M>0$ there exists $s_M\in [0,1)$ such that $\Im \gamma(s)>M$ for all $s\geq s_M$.

\smallskip

{\sl Claim A}:
\begin{enumerate}
\item there exists a sequence $\{t_m\}$ converging to $+\infty$ such that $it_m\in \gamma([0,1))$,
\item there exist $\beta>0$ and a sequence $\{t_k\}$ converging to $+\infty$ such that $it_k\not\in S_\Omega(\gamma, \beta)$.
\end{enumerate}

\smallskip

Assume that Claim A is true.  Translating in the unit disc via $h$, this implies that $\{\phi_{t_k}(0)\}$ is in the complement in $\D$ of the hyperbolic sector $S_\D(\tilde\gamma, \beta)$, thus, it is outside a fixed Stolz region of vertex $1$, while $\{\phi_{t_m}(0)\}$ converges radially to $1$. Therefore, $\lim_{t\to+\infty}\Arg(1-\phi_t(0))$ does not exist.

{\sl Proof of Claim A}.

First of all notice that, since $\tan\alpha_0\leq \frac{1}{8D}$ by \eqref{tan-alp-r}, and $b_{j}>4b_{j-1}$,
\begin{equation}\label{Eq:go-good-box}
b_j-b_{j-1}-2R_jD=b_j-b_{j-1}-4b_jD\tan\alpha_0\geq \frac{1}{2}b_j-b_{j-1}>0.
\end{equation}
Let
\[
B_j:=\{\zeta\in (\strip_{R_j}-a_j): b_{j-1}+R_jE < \Im \zeta < b_j-R_jD\}.
\]
By \eqref{Eq:go-good-box}, Proposition \ref{good box} implies that $B_j$ is a good box in $\Omega$ for the data $(c, D,E)$. Moreover, $B_j$ has width $R_j=2b_j\tan \alpha_0$ and, by \eqref{tan-alp-r}, its height is
\begin{equation*}
\begin{split}
h_j&:=b_j-b_{j-1}-R_j(D+E)>b_j-b_{j-1}-2R_j D\\&=b_j-b_{j-1}-4b_j D \tan \alpha_0\geq \frac{1}{2} b_j-b_{j-1}.
\end{split}
\end{equation*}
In particular since $b_j>4b_{j-1}$, we have by \eqref{tan-alp-r}
\begin{equation}\label{Eq:good-height-in-box}
\frac{h_j}{R_j}>\frac{1-\frac{2b_{j-1}}{b_j}}{4\tan \alpha_0}>\frac{1}{8\tan \alpha_0}\geq N.
\end{equation}

Since $\Im \gamma(s)$  converges to $+\infty$ as $s\to 1$, and $\gamma(0)=0$, it follows that there exist $0<s^0_j<t^0_j<1$ such that $\gamma(s)\in B_j$ for all $s\in (s^0_j, t^0_j)$ and $\Im \gamma(s^0_j)=b_{j-1}+R_jE$, $\Im  \gamma(t^0_j)=b_j-R_jD$. Therefore, by \eqref{Eq:good-height-in-box}, we can find  $s_j^0<s_j<t_j<t_j^0$ such that $\Im \gamma(t_j)-\Im \gamma(s_j)>NR_j$.

Hence, by Corollary \ref{cor:good-box-estim} (with $B=B_j, a=-a_j, R=R_j$) there exists  $r_j\in (b_{j-1}+R_{j-1}E, b_j-R_j(D+N_1))$ such that  for all $u\in (s_j,t_j)$ such that $r_j<\Im \gamma(u)<r_j+N_1R_j$ (recalling that $\chi= 1-e^{-2(\epsilon+\delta)})$,
\begin{equation}\label{Eq:gamma-in_Bntosc}
\gamma(u)\in B_j^+:=\{z\in B_j: |\Re z+a_j-\frac{R_j}{2}|\leq  \frac{\chi}{2}R_j\}.
\end{equation}
Using \eqref{Eq:good-choice-Ra}, it is easy to see that  $|a_j-\frac{R_j}{2}|=b_j(\tan \alpha_0-\tan \alpha_1)$, while  $R_j/2=b_j \tan \alpha_0$. Hence, by \eqref{Eq:good-alpa1},
\begin{equation*}
|a_j-\frac{R_j}{2}|-\frac{\chi}{2}R_j=b_j((1-\chi)\tan \alpha_0-\tan \alpha_1)>0.
\end{equation*}
Therefore,  $it\not\in B^+_j$ for all $t>0$ such that $it\in B_j$.

Moreover, since $-a_{j}+\frac{R_j}{2}=(-1)^jb_{j}(\tan \alpha_0-\tan \alpha_1)$, it follows that $B_j^+\subset \{z\in \C: (-1)^j\Re z>0\}$. Hence, if $u_j\in (s_j,t_j)$ is such that $r_j<\Im \gamma(u_j)<r_j+N_1R_j$, we have  $\Re \gamma(u_{2j})>0$ and $\Re \gamma(u_{2j-1})<0$, $j=1,2,\ldots$.

Since $\gamma$ is continuous, it follows that there exists a sequence $\{t_m\}$ converging to $+\infty$ such that $it_m\in \gamma([0,1))$ for all $m\in \N$: Part (1) of Claim A is proved.

As for Part (2) of Claim A,  let $u_{2j}\in (s_{2j},t_{2j})$ be such that $\Im \gamma(u_{2j})=r_{2j}+\frac{N_1}{2}R_{2j}$.  Note that $r_{2j}<\Im \gamma(u_{2j})<r_{2j}+N_1R_{2j}$. Let
\[
y_{2j}:=\Im \gamma(u_{2j})=r_{2j}+\frac{N_1}{2}R_{2j}.
\]
  Let $x_{2j}\in (0,R_{2j}/2)$ be such that  $-x_{2j}-a_{2j}+\frac{R_{2j}}{2}=\frac{\chi}{2}R_{2j}$. Note that by \eqref{Eq:good-choice-Ra} and \eqref{Eq:good-alpa1},
\[
x_{2j}=b_{2j} [(1-\chi)\tan\alpha_0-\tan \alpha_1]>0.
\]
By Proposition \ref{Prop:strip}(3) (with $R=R_{2j}, a=-a_{2j}$), the curve $\eta:(-\frac{R_{2j}}{2},\frac{R_{2j}}{2})\ni s\mapsto s-a_{2j}+\frac{R_{2j}}{2}+iy_{2j}$ is a geodesic of $\strip_{R_{2j}}-a_{2j}$, and by Proposition \ref{Prop:strip}(4),
\begin{equation*}
\begin{split}
k_{\strip_{R_{2j}}-a_{2j}}(B_{2j}^+, iy_{2j})&= k_{\strip_{R_{2j}}-a_{2j}}(\eta(-x_{2j}), iy_{2j})=k_{\strip_{R_{2j}}-a_{2j}}(\eta(-x_{2j}), \eta(a_{2j}-\frac{R_{2j}}{2}))\\
&\geq \frac{1}{2}\log\frac{R_{2_j}-2x_{2j}}{R_{2j}-2 |a_{2j}-\frac{R_{2j}}{2}|}=\log\frac{\chi \tan\alpha_0+\tan\alpha_1}{\tan\alpha_1}=:\tilde\beta_1>0,
\end{split}
\end{equation*}
where the inequality follows  from the left inequality in Proposition \ref{Prop:strip}(3) and the last equality follows from \eqref{Eq:good-choice-Ra}.

By Proposition \ref{good box},  $k_{\Omega}(B_{2j}^+, iy_{2j})\geq \frac{1}{C}k_{\strip_{R_{2j}}-a_{2j}}(B_{2j}^+, iy_{2j})$. Let $\beta_1:=\frac{\tilde\beta_1}{C}$. The previous estimate and \eqref{Eq:gamma-in_Bntosc} imply then that for all $t\in (s_{2j}, t_{2j})$  such that $r_{2j}<\Im \gamma(t)<r_{2j}+N_1R_{2j}$
\begin{equation}\label{Eq:good-part-estimabeta}
k_{\Omega}(\gamma(t), iy_{2j})\geq \beta_1.
\end{equation}
Now, let $W_{2j}^+:=\{z\in \Omega: \Im z\geq r_{2j}+(\frac{N_1}{2}+N_2)R_{2j}\}$ and $W_{2j}^-:=\{z\in \Omega: \Im z\leq r_{2j}+(\frac{N_1}{2}-N_2)R_{2j}\}$. Assume $t\in [0,1)$ and $\gamma(t)\in W^+_{2j}$. Let $L_{2j}^+:=\{z\in B_{2j}: \Im z=r_{2j}+(\frac{N_1}{2}+N_2)R_{2j}\}$. Hence, by Proposition \ref{good box} and Proposition \ref{Prop:strip}(5)
\begin{equation*}
\begin{split}
k_\Omega(\gamma(t), iy_{2j})&\geq k_\Omega(W^+_{2j}, iy_{2j})=
  k_{\Omega}(L^+_{2j}, iy_{2j})\geq \frac{1}{C}k_{\strip_{R_2j}-a_{2j}}(L^+_{2j}, iy_{2j})\\&\geq \frac{1}{C}k_{\strip_{R_2j}-a_{2j}}(L^+_{2j}, R_{2j}-a_{2j}+iy_{2j})\\&=\frac{1}{C}k_{\strip_{R_2j}-a_{2j}}(L^+_{2j}, R_{2j}-a_{2j}+i(r_{2j}+\frac{N_1}{2}R_{2j}))=\frac{\pi N_2}{2C}.
\end{split}
\end{equation*}
A similar computation shows that $k_\Omega(\gamma(t), iy_{2j})\geq \frac{\pi N_2}{2C}$ for all $t\in [0,1)$ and $\gamma(t)\in W^-_{2j}$.

Let $\beta:=\min\{\beta_1, \frac{\pi N_2}{2C}\}$ and let $u_1^{2j}:=\min \{t\in [s_{2j},t_{2j}]:  \Im \gamma(t)\geq r_{2j}\}$ and $u_2^{2j}:=\max \{t\in [s_{2j},t_{2j}]:  \Im \gamma(t)\leq r_{2j}+N_1R_{2j}\}$.

By Corollary \ref{cor:good-box-estim}, for every $t\in I\setminus [u_1^{2j},u_2^{2j}]$ it follows that $\gamma(t)\in W_{2j}^+\cup W_{2j}^-$, hence $k_\Omega(\gamma(t), iy_{2j})\geq \beta$ for what we already proved. On the other hand, if $t\in [s_{2j},t_{2j}]$ then either $\gamma(t)\in W_{2j}^+\cup W_{2j}^-$ and hence $k_\Omega(\gamma(t), iy_{2j})\geq \beta$, or $r_{2j}<\Im \gamma(t)<r_{2j}+N_1R_{2j}$ and hence $k_\Omega(\gamma(t), iy_{2j})\geq \beta$ by \eqref{Eq:good-part-estimabeta}.

Therefore the sequence $\{iy_{2j}\}$ satisfies the requirement of Claim A Part (2).
\end{proof}

For $\alpha\geq 1$, let
\[
Z_\alpha:=\{z\in \C: |\Re z|^\alpha<\Im z\}.
\]
As it is shown in Proposition \ref{sector-implies-convergnt}, if $(\phi_t)$ is a non-elliptic semigroup in $\D$ with universal model $(\C, h, z+it)$ and $Z_1+p\subset h(\D)$ for some $p \in h(\D)$, then $\phi_t(z)$ converges non-tangentially to its Denjoy-Wolff point. In the following proposition, we show that $\alpha=1$ is the best we can have:

\begin{proposition}\label{Prop:tang-conve-with-para}
Let $\alpha>1$. Then there exists a parabolic  semigroup $(\phi_t)$ of $\D$ such that $\phi_t$ does not converge non-tangentially to its Denjoy-Wolff point, and such that if $h$ is its Koenigs function, then $Z_\alpha\subset h(\D)$.
\end{proposition}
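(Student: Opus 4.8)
The plan is to construct the Koenigs domain $\Omega=h(\D)$ explicitly as a decreasing intersection of box domains
$\Omega=\bigcap_{j\ge1}\Omega_{L_j,b_j,R_j}$,
designed so that far up $\Omega$ coincides with a "good box" whose vertical bisectrix marches off to $+\infty$ faster than the vertical ray can follow, yet so flat that $Z_\alpha$ still fits inside. Concretely, fix $\mu$ with $1/\alpha<\mu<1$ (possible exactly because $\alpha>1$, and then automatically $\mu\alpha>1$), take $b_j\uparrow+\infty$ very fast with $b_{j+1}\ll b_j^{\mu\alpha}$, and set $R_j=b_j^\mu$ and $L_j=-2b_j^{1/\alpha}$ (so $L_j<0$ is the left edge of the $j$-th box, and the bisectrix sits at $\Re\zeta=L_j+R_j/2\approx\tfrac12 b_j^\mu\to+\infty$). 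First I would run the routine checks: since $|L_j|^\alpha=2^\alpha b_j>b_j$ and $|L_j+R_j|^\alpha\approx b_j^{\mu\alpha}\gg b_j$, each slit of $\Omega_{L_j,b_j,R_j}$ avoids $Z_\alpha$, so $Z_\alpha\subset\Omega$; the left edges decrease and the right edges increase, so the boxes are nested and the central channel of $\Omega$ at heights $T\in(b_{j-1},b_j]$ is exactly the strip $\strip_{R_j}+L_j$; and $\Omega$ is a proper simply connected domain, starlike at infinity, with $w+it\in Z_\alpha\subset\Omega$ for all $w$ and large $t$. By the characterization recalled in the introduction, $\phi_t:=h^{-1}(h(\cdot)+it)$ is then a parabolic semigroup of zero hyperbolic step with Koenigs function $h$ and $Z_\alpha\subset h(\D)$, as required; call $\tau$ its Denjoy–Wolff point.

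It remains to show $\phi_t(0)$ does not converge non-tangentially, i.e.\ (Proposition~\ref{Prop:conv-nt-sc}, applied in $\Omega$ to the ray $\{h(0)+it\}$) that for every geodesic $\eta$ of $\Omega$ landing at the prime end $\hat h(\underline x_\tau)$ one has $k_\Omega(h(0)+it,\eta)\to+\infty$ as $t\to+\infty$. For $j$ large such an $\eta$ is trapped in the central channel, so its portion at heights in $(b_{j-1}+R_jE,b_j-R_jD)$ lies in the good box $B_j$ (good for suitable fixed data $(c,D,E)$ by Proposition~\ref{good box}); since the height of $B_j$ is $\asymp b_j\gg R_j$, Corollary~\ref{cor:good-box-estim} applies and pins $\eta$, over a band of heights of length $\asymp R_j$ about some $y_j$, to within real part $\tfrac{R_j}{2}(1-e^{-2(\epsilon+\delta)})$ of $\mathrm{bis}(B_j)$. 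The ray sits at $\Re\zeta=0$, i.e.\ at distance only $|L_j|=2b_j^{1/\alpha}$ from the left edge of $B_j$, while the bisectrix is at $\Re\zeta\approx\tfrac12 b_j^\mu$; so by the strip estimate Proposition~\ref{Prop:strip}(3) together with the comparison $k_\Omega\ge C^{-1}k_{\strip_{R_j}+L_j}$ inside $B_j$ (Proposition~\ref{good box}), the part of $\eta$ in this band lies at $k_\Omega$-distance at least
\[
\tfrac1C\Bigl(\tfrac12\log\tfrac{R_j}{2|L_j|}-2(\epsilon+\delta)\Bigr)=\tfrac{1}{2C}\bigl(\mu-\tfrac1\alpha\bigr)\log b_j-O(1)\ \longrightarrow\ +\infty
\]
from the point $iy_j$ of the ray.

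To conclude I must rule out $\eta$ coming back near $iy_j$ from another height. On $B_j$ off the band the vertical separation is $\gtrsim R_j$, giving only an $O(1)$ bound from Proposition~\ref{Prop:strip}(5); the real work is in the transition regions between consecutive good boxes, where $\Omega$ is a nested pair of strips whose centres sit at $\Re\zeta\approx\tfrac12 b_j^\mu$ and $\approx\tfrac12 b_{j+1}^\mu$. Since strip geodesics bow toward the centre, the relevant arc of $\eta$ stays to the right of $\Re\zeta\gtrsim b_j^\mu$, which by Proposition~\ref{Prop:strip}(3) is at distance $\gtrsim\tfrac12\log\bigl(1+b_j^\mu/b_{j+1}^{1/\alpha}\bigr)\to+\infty$ from $\Re\zeta=0$ precisely because $b_{j+1}\ll b_j^{\mu\alpha}$. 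Combining the three regimes yields $k_\Omega(iy_j,\eta)\to+\infty$, so the ray is eventually outside every hyperbolic sector $S_\Omega(\eta,R)$; Proposition~\ref{Prop:conv-nt-sc} then gives that $\phi_t(0)$ does not converge non-tangentially.

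The main obstacle is not a single estimate but the simultaneous bookkeeping of the three growth scales: $|L_j|$ large enough to clear the parabola $Z_\alpha$ but small compared with $R_j$ (so the ray sits in the far-left fringe while the bisectrix escapes to $+\infty$), $R_j$ small compared with $b_j$ (so $B_j$ is a genuine tall good box and $\Omega$ contains no vertical Euclidean sector), and $b_{j+1}$ small compared with $b_j^{\mu\alpha}$ (so $\eta$ cannot sneak back to the ray between boxes). All of these force $1/\alpha<\mu<1$ with $\mu\alpha>1$, which is exactly what $\alpha>1$ buys — and also why the construction degenerates at $\alpha=1$, in accordance with Proposition~\ref{sector-implies-convergnt}. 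A secondary technical point, handled exactly as in the proof of Proposition~\ref{Prop:example-non-tg-osc}, is the transfer of the strip estimates of Proposition~\ref{Prop:strip} to geodesics of $\Omega$ through the localization package of Propositions~\ref{Prop:local-strip-bound}, \ref{Prop:estim-strip-var2} and \ref{good box}.
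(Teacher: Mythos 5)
Your construction is essentially the one in the paper (the paper takes $a_j=R_j^{\beta}$, $b_j=a_j^{\alpha}$ with $\beta\in(1/\alpha,1)$, which is exactly your scaling: left slit at distance $\asymp b_j^{1/\alpha}$ from the imaginary axis, box width $R_j=b_j^{\mu}$ with $1/\alpha<\mu<1$), and your key estimate for the band portion of the geodesic pinned to the bisectrix, namely a lower bound of order $\frac{1}{2C}\bigl(\mu-\frac1\alpha\bigr)\log b_j$ for $k_\Omega(iy_j,\cdot)$, is the paper's central computation. The problem is in how you dispose of the rest of the geodesic. You keep the good-box data $(c,D,E)$, and hence $\epsilon$, $C$, $N_0$, $N_1$, $N_2$, fixed, and you concede yourself that for the part of $\eta$ lying in $B_j$ but off the distinguished band the vertical-separation estimate of Proposition~\ref{Prop:strip}(5) gives only a bound independent of $j$. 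Your transition-region discussion does not touch that part (it sits well inside $B_j$, say at heights $y_j\pm 5R_j$), so ``combining the three regimes'' produces only $\min\{\text{(something}\to\infty),\,O(1)\}=O(1)$, i.e.\ $k_\Omega(iy_j,\eta)\ge c_0$ for a fixed constant $c_0$. That is not enough: non-tangential convergence only requires eventual containment in \emph{some} hyperbolic sector $S_\Omega(\eta,R)$, with $R$ arbitrary (possibly much larger than $c_0$), so a uniform positive lower bound does not contradict it. You genuinely need the distance from $iy_j$ to the whole geodesic to exceed every prescribed $M$ for $j$ large, and with fixed box data your argument cannot deliver that.

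The paper repairs exactly this point by letting the data grow with $j$: it fixes an increasing sequence $N_j^0\to\infty$ (hence $N_j^1,N_j^2\to\infty$), imposes $b_j-b_{j-1}>N_jR_j$ on the heights, and then bounds from below the distance from $iy_j$ to the sets $W_j^+=\{z\in\Omega:\Im z\ge y_j+N_j^2R_j\}$ and $W_j^-=\{z\in\Omega:\Im z\le y_j-N_j^2R_j\}$ by $\frac{\pi N_j^2}{2C}\to\infty$, using that any path from $iy_j$ to $W_j^\pm$ must first cross the horizontal segment $L_j^\pm$ of the box, where Propositions~\ref{good box} and \ref{Prop:strip}(5) apply. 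Together with Corollary~\ref{cor:good-box-estim} this covers \emph{every} point of the geodesic --- in particular both your regime inside $B_j$ off the band and your transition regions --- so the paper needs neither your unproven claim that between consecutive boxes the geodesic ``stays to the right of $\Re\zeta\gtrsim b_j^\mu$'' (which is only a heuristic about geodesics bowing to the centre, and is what forces your extra constraint $b_{j+1}\ll b_j^{\mu\alpha}$), nor any control of the geodesic outside the central channel. To fix your proof, replace the fixed $N_0$ by $N_j^0\to\infty$ and run this $W_j^\pm$ covering; note also that you only need divergence along the sequence of points $iy_j$, not the full limit $k_\Omega(h(0)+it,\eta)\to+\infty$ you announce, which is stronger than what the statement requires and is not established by your sketch.
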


\begin{proof} Let  $c>1$, let $D\geq D(c)$ be the constant given by Proposition \ref{Prop:local-strip-bound} and let $E\in(0, D)$. Let $C>1$ be given by Proposition \ref{good box}. Let $\{N_j^0\}$ be an increasing sequence of positive numbers, converging to $+\infty$, such that $N_j^0>\max\{\frac{4(C+1)\epsilon}{\pi}, 2D\}$ for all $j$. Let $\delta>0$. For every $j$, let $N_j>N_j^0$ be the constant given in Proposition \ref{Prop:strip}(6), relative to the pair $(N^0=N_j^0, \delta)$.

Let $\beta\in (\frac{1}{\alpha},1)$. Let  $\{R_j\}$ be a sequence of positive real numbers, converging to $\infty$, such that for all $j=1,2,\ldots$,
\begin{equation*}
\begin{split}
& R_0^{\beta-1}<\frac{1}{2},\\
& R_j>R_{j-1}^{\beta\alpha},\\
& R_j^{\alpha\beta-1}>N_j+1.
\end{split}
\end{equation*}
For $j=0,1,2,\ldots$ we set
\begin{equation*}
\begin{split}
& a_j:=R_j^\beta,\\
& b_j:=a_j^\alpha=R_j^{\alpha\beta}.
\end{split}
\end{equation*}
Note that
\begin{equation}\label{Eq:behavior-constants}
\begin{split}
& \lim_{j\to +\infty} \frac{R_j}{a_j}=+\infty,\\
& \frac{b_j-b_{j-1}}{R_j}> R_j^{\alpha\beta-1}-1>N_j>N_j^0>2D.
\end{split}
\end{equation}
Let $\Omega_j:=\Omega_{-a_j, b_j,  R_j}$ and $\Omega:=\cap_{j=0}^\infty \Omega_j$.

Note that $\Omega$ is starlike at infinity and $a_j=R_j^\beta>R_{j-1}^{\beta}=a_{j-1}$. Moreover,
\begin{equation}\label{Eq:R_j-tiene-testa-aj}
R_j-2a_j=R_j-2R_j^\beta=R_j(1-2R_j^{\beta-1})>R_j(1-2R_0^{\beta-1})>0.
\end{equation}
Therefore, $Z_\alpha\subset \Omega$.

We let $h:\D \to \Omega$ be the Riemann map such that $h(0)=i$. We define the semigroup $\phi_t:=h^{-1}(h(z)+it)$. We can assume that $1$ is its Denjoy-Woff point. We prove that there exists a sequence $\{t_k\}$, converging to $+\infty$, such that $k_\D([0,1), \phi_{t_k}(0))\to +\infty$ when $k$ tends to $\infty$, which means that the sequence $\{\phi_{t_k}(0)\}$ converges tangentially to $1$.

Let $\gamma_0:=h([0,1))$. The previous condition is equivalent to find a sequence $\{t_k\}$ converging to $+\infty$ such that $k_{\Omega}(\gamma_0, it_k)\to +\infty$ when $k$ tends to $\infty$.

In order to prove this, we note that by \eqref{Eq:behavior-constants}, $b_j-b_{j-1}>R_j N_j>2R_jD$. Hence,
\[
B_j:=\{\zeta\in (\strip_{R_j}-a_j): b_{j-1}+R_j E<\Im \zeta < b_j-R_j D\},
\]
is a good box for $\Omega$ for the data $(c,D,E)$, with width $R_j$ and height $b_j-b_{j-1}$ (see Proposition~\ref{good box}).

Since $\gamma_0:[0,1)\to \Omega$ has the property that $\gamma_0(0)=i$ and $\Im \gamma_0(t)\to +\infty$ as $t\to+\infty$, we can find $0<s_j^0<t_j^0<1$ such that $\gamma_0(s)\in B_j$ for all $s\in (s_j^0, t_j^0)$ and $\Im \gamma_0(s_j^0)=b_{j-1}+R_jE$, $\Im \gamma_0(t_j^0)=b_{j}-R_jD$.

Let
\[
N^1_j:=N^0_j-\frac{4\epsilon}{\pi}, \quad N^2_j:=\frac{N^0_j}{2}-\frac{2(C+1)\epsilon}{\pi}, \quad \chi:= 1-e^{-2(\epsilon+\delta)}.
\]

By Corollary  \ref{cor:good-box-estim} (with $B=B_j, a=-a_j, R=R_j$) there exists  $r_j\in (b_{j-1}+R_{j}E, b_j-R_j(D+N_j^1))$ such that  for all $u\in (s_j,t_j)$ such that $r_j<\Im \gamma(u)<r_j+N^1_jR_j$,
\begin{equation}\label{Eq:gamma-in_Bntosc}
\gamma(u)\in B_j^+:=\{z\in B_j: |\Re z+a_j-\frac{R_j}{2}|\leq  \frac{\chi}{2}R_j)\}.
\end{equation}

Moreover, let $u_0^{j}:=\min \{t\in [s_{j},t_{j}]:  \Im \gamma(t)\geq r_{j}\}$ and $u_1^{j}:=\max \{t\in [s_{j},t_{j}]:  \Im \gamma(t)\leq r_{j}+N_j^1R_{j}\}$. It follows from Corollary  \ref{cor:good-box-estim} that for every $t\in I\setminus [u^j_0,u^j_1]$
\begin{equation}\label{Eq:fuera-buena-caja-j}
\gamma(t)\not\in \{z\in B_j: r_j+(\frac{N_j^1}{2}-N_j^2)R_j<\Im z<r_j+(\frac{N_j^1}{2}+N_j^2)R_j\}.
\end{equation}

Note that, if $z\in B_j^+$ then
\[
\Re z\geq \frac{R_j}{2}(1-\chi)-a_j=\frac{1-\chi}{2}R_j-R^\beta_j,
\]
and, since $\frac{1-\chi}{2}>0$ and $\beta<1$, for every $M>0$ there exists $j_M$ such that $B_j^+\subset \{\zeta\in \C: \Re \zeta>M\}$ for all $j\geq j_M$.

Let $u_{j}\in (u_1^{j},u_2^{j})$ be such that $\Im \gamma(u_{j})=r_{j}+\frac{N^1_j}{2}R_{j}$.  Note that $r_{j}<\Im \gamma(u_{j})<r_{j}+N^1_jR_{j}$. Let
\[
y_{j}:=\Im \gamma(u_{j})=r_{j}+\frac{N^1_j}{2}R_{j}.
\]
  Let $x_{j}\in (0,R_{j}/2)$ be such that  $-x_{j}-a_{j}+\frac{R_{j}}{2}=\frac{\chi}{2}R_{j}$. Note that, for $j$ sufficiently large, $x_j>0$.

  By Proposition \ref{Prop:strip}(2) (with $R=R_{j}, a=-a_{j}$), the curve $\eta:(-\frac{R_{j}}{2},\frac{R_{j}}{2})\ni s\mapsto s-a_{j}+\frac{R_{j}}{2}+iy_{j}$ is a geodesic of $\strip_{R_{j}}-a_{j}$, and by Proposition \ref{Prop:strip}(4),
\begin{equation*}
\begin{split}
k_{\strip_{R_{j}}-a_{j}}(B_j^+, iy_{j})&= k_{\strip_{R_{j}}-a_{j}}(\eta(-x_{j}), iy_{j})=k_{\strip_{R_{j}}-a_{j}}(\eta(-x_{j}), \eta(a_{j}-\frac{R_{j}}{2}))\\
&\geq \frac{1}{2}\log\frac{R_{j}-2x_{j}}{R_{j}-2 (\frac{R_{j}}{2}-a_{j})}=\frac{1}{2}\log \frac{\chi R_j+2a_j}{2a_j}\simeq \frac{1}{2}\log \frac{R_j}{a_j},
\end{split}
\end{equation*}
where the last inequality follows  from \eqref{Eq:R_j-tiene-testa-aj} and the inequalities in Proposition \ref{Prop:strip}(2).

By Proposition  \ref{good box},  $k_{\Omega}(B_j^+, iy_{j})\geq \frac{1}{C}k_{\strip_{R_{2j}}-a_{j}}(B_j^+, iy_{j})$.

The previous estimate and \eqref{Eq:gamma-in_Bntosc} imply then that for all $M>0$ there exists $j_0^M$ such that for all $j\geq j_0^M$ and $t\in (s_{j}, t_{j})$  such that $r_{j}<\Im \gamma(t)<r_{j}+N_j^1R_{j}$
\begin{equation}\label{Eq:good-part-estimainf}
k_{\Omega}(\gamma(t), iy_{j})>M.
\end{equation}
Now, let $W_{j}^+:=\{z\in \Omega: \Im z\geq r_{j}+(\frac{N_j^1}{2}+N_j^2)R_{j}\}$ and $W_{j}^-:=\{z\in \Omega: \Im z\leq r_{j}+(\frac{N_j^1}{2}-N_j^2)R_{j}\}$. Assume $t\in [0,1)$ and $\gamma(t)\in W^+_{j}$. Let $L_{j}^+:=\{z\in B_{j}: \Im z=r_{j}+(\frac{N_j^1}{2}+N_j^2)R_{j}\}$. Hence, by Proposition  \ref{good box} and Proposition \ref{Prop:strip}(5),
\begin{equation*}
\begin{split}
k_\Omega(\gamma(t), iy_{j})&\geq k_\Omega(W^+_{j}, iy_{j})=
  k_{\Omega}(L^+_{j}, iy_{j})\geq \frac{1}{C}k_{\strip_{R_j}-a_{j}}(L^+_{j}, iy_{j})\\&\geq \frac{1}{C}k_{\strip_{R_j}-a_{j}}(L^+_{j}, R_{j}-a_{j}+iy_{j})\\&=\frac{1}{C}k_{\strip_{R_j}-a_{j}}(L^+_{j}, R_{j}-a_{j}+i(r_{j}+\frac{N_j^1}{2}R_{j}))=\frac{\pi N_j^2}{2C}.
\end{split}
\end{equation*}
A similar computation shows that $k_\Omega(\gamma(t), iy_{j})\geq \frac{\pi N_j^2}{2C}$ for all $t\in [0,1)$ and $\gamma(t)\in W^-_{j}$.

Note that $\lim_{j\to +\infty}\frac{\pi N_j^2}{2C}=+\infty$.  Hence, for every $M>0$ there exists $j_1^M$ such that for all $j\geq j_1^M$ and all $t\in [0,1)$ such that $\gamma(t)\in W_j^+\cup W_j^-$, we have $k_\Omega(iy_j, \gamma(t))>M$.

By \eqref{Eq:fuera-buena-caja-j}, and \eqref{Eq:good-part-estimainf} this means that for every $M>0$, and for every $j\geq \max\{j_0^M, j_1^M\}$ we have $k_{\Omega}(iy_j, \gamma(t))>M$ for all $t\in [0,1)$. The proof is completed.
\end{proof}

\end{document}